\begin{document}

\newtheorem{theorem}[subsection]{Theorem}
\newtheorem{proposition}[subsection]{Proposition}
\newtheorem{lemma}[subsection]{Lemma}
\newtheorem{corollary}[subsection]{Corollary}
\newtheorem{conjecture}[subsection]{Conjecture}
\newtheorem{prop}[subsection]{Proposition}
\newtheorem{defin}[subsection]{Definition}

\numberwithin{equation}{section}
\newcommand{\mr}{\ensuremath{\mathbb R}}
\newcommand{\mc}{\ensuremath{\mathbb C}}
\newcommand{\dif}{\mathrm{d}}
\newcommand{\intz}{\mathbb{Z}}
\newcommand{\ratq}{\mathbb{Q}}
\newcommand{\natn}{\mathbb{N}}
\newcommand{\comc}{\mathbb{C}}
\newcommand{\rear}{\mathbb{R}}
\newcommand{\prip}{\mathbb{P}}
\newcommand{\uph}{\mathbb{H}}
\newcommand{\fief}{\mathbb{F}}
\newcommand{\majorarc}{\mathfrak{M}}
\newcommand{\minorarc}{\mathfrak{m}}
\newcommand{\sings}{\mathfrak{S}}
\newcommand{\fA}{\ensuremath{\mathfrak A}}
\newcommand{\mn}{\ensuremath{\mathbb N}}
\newcommand{\mq}{\ensuremath{\mathbb Q}}
\newcommand{\half}{\tfrac{1}{2}}
\newcommand{\f}{f\times \chi}
\newcommand{\summ}{\mathop{{\sum}^{\star}}}
\newcommand{\chiq}{\chi \bmod q}
\newcommand{\chidb}{\chi \bmod db}
\newcommand{\chid}{\chi \bmod d}
\newcommand{\sym}{\text{sym}^2}
\newcommand{\hhalf}{\tfrac{1}{2}}
\newcommand{\sumstar}{\sideset{}{^*}\sum}
\newcommand{\sumprime}{\sideset{}{'}\sum}
\newcommand{\sumprimeprime}{\sideset{}{''}\sum}
\newcommand{\sumflat}{\sideset{}{^\flat}\sum}
\newcommand{\shortmod}{\ensuremath{\negthickspace \negthickspace \negthickspace \pmod}}
\newcommand{\V}{V\left(\frac{nm}{q^2}\right)}
\newcommand{\sumi}{\mathop{{\sum}^{\dagger}}}
\newcommand{\mz}{\ensuremath{\mathbb Z}}
\newcommand{\leg}[2]{\left(\frac{#1}{#2}\right)}
\newcommand{\muK}{\mu_{\omega}}
\newcommand{\thalf}{\tfrac12}
\newcommand{\lp}{\left(}
\newcommand{\rp}{\right)}
\newcommand{\Lam}{\Lambda_{[i]}}
\newcommand{\lam}{\lambda}
\newcommand{\af}{\mathfrak{a}}
\newcommand{\sw}{S_{[i]}(X,Y;\Phi,\Psi)}
\newcommand{\lz}{\left(}
\newcommand{\pz}{\right)}
\newcommand{\bfrac}[2]{\lz\frac{#1}{#2}\pz}
\newcommand{\odd}{\mathrm{\ primary}}
\newcommand{\even}{\text{ even}}
\newcommand{\res}{\mathrm{Res}}

\theoremstyle{plain}
\newtheorem{conj}{Conjecture}
\newtheorem{remark}[subsection]{Remark}

\makeatletter
\def\widebreve{\mathpalette\wide@breve}
\def\wide@breve#1#2{\sbox\z@{$#1#2$}%
     \mathop{\vbox{\m@th\ialign{##\crcr
\kern0.08em\brevefill#1{0.8\wd\z@}\crcr\noalign{\nointerlineskip}%
                    $\hss#1#2\hss$\crcr}}}\limits}
\def\brevefill#1#2{$\m@th\sbox\tw@{$#1($}%
  \hss\resizebox{#2}{\wd\tw@}{\rotatebox[origin=c]{90}{\upshape(}}\hss$}
\makeatletter

\title[First moment of central values of Hecke $L$-functions with Fixed Order Characters]{First moment of central values of Hecke $L$-functions with Fixed Order Characters}

%%\date{\today}

\author[P. Gao]{Peng Gao}
\address{School of Mathematical Sciences, Beihang University, Beijing 100191, China}
\email{penggao@buaa.edu.cn}

\author[L. Zhao]{Liangyi Zhao}
\address{School of Mathematics and Statistics, University of New South Wales, Sydney, NSW 2052, Australia}
\email{l.zhao@unsw.edu.au}

\begin{abstract}
 We evaluate asymptotically the smoothed first moment of central values of families of quadratic, cubic, quartic and sextic Hecke $L$-functions over various imaginary quadratic number fields of class number one, using the method of double Dirichlet series. In particular, we obtain asymptotic formulas for the quadratic families with error terms of size $O(X^{1/4+\varepsilon})$ under the generalized Riemann hypothesis.
\end{abstract}

\maketitle

\noindent {\bf Mathematics Subject Classification (2010)}: 11M06, 11M41, 11N37, 11L05, 11L40, 11R11   \newline

\noindent {\bf Keywords}:  central values, Hecke $L$-functions, Gauss sums, mean values, quadratic Hecke characters

\section{Introduction}
\label{sec 1}

The method of multiple Dirichlet series has been shown to be an elegant and powerful tool in analytical number theory with many
applications.  Among them, the most notable is their use in the study of moments of central values of families of $L$-functions.  In \cite{DoHo}, D. Goldfeld and J. Hoffstein utilized this approach to investigate the first moment of the family of quadratic Dirichlet $L$-functions. A systematic development of this method is given
by A. Diaconu, D. Goldfeld and J. Hoffstein in \cite{DGH}. \newline

   The effectiveness of the method of multiple Dirichlet series depends heavily on the analytic properties of the underlying series.  These properties are usually established using a sufficient number of functional equations for the multiple Dirichlet series of $n$ complex variables under consideration to acquire meromorphic continuation of the series to $\mc^n$. As this process often requires one to modify the objects
involved by attaching some correction factors, it is thus desirable to seek for ways that allow one to treat the series directly. In his study
of the ratios conjecture for quadratic Dirichlet $L$-functions using multiple Dirichlet series in \cite{Cech1},  M. \v Cech
established the functional equation for a general (not necessarily primitive) quadratic Dirichlet $L$-function in terms of Dirichlet series
whose coefficients are generalized Gauss sums. This enables him to meromorphically continue the underlying multiple Dirichlet series
to a suitable large region without altering it. \newline

  In \cite{G&Zhao2023-3}, the authors used the afore-mentioned approach of \v Cech to evaluate the first moment of central values of a family of quadratic
Dirichlet $L$-functions, obtaining an asymptotic formula under the generalized Riemann hypothesis (GRH) with an error term that is consistent with
the conjecture given by D. Goldfeld and J. Hoffstein \cite{DoHo}. It is the aim of this paper to further apply these ideas to
study the Hecke $L$-functions of fixed orders over number fields.  This is a rich subject previously investigated in \cites{FaHL, FHL,
Luo, Diac, BFH05, DT05, G&Zhao1, G&Zhao3, G&Zhao2020,G&Zhao2023-4,G&Zhao2023-5} using various methods, including multiple Dirichlet series
and approximate functional equations. The families of Hecke $L$-functions we study in this paper have the feature that their members are attached to characters not necessarily primitive, so that our treatments allow us to obtain better error terms in the resulting asymptotic formulas. \newline

  To state our results, we write $\mathcal{O}_K, U_K$ and $D_K$ for the ring of integers, the group of units and the discriminant of  an
arbitrary number field $K$, respectively. Further, let $N(n)$ and ${\rm Tr}(n)$ stand for the norm and the trace of any $n \in \mathcal{O}_K$,
respectively. We reserve the letter $\varpi$ for a prime element in $\mathcal O_K$, by which we mean that the ideal $(\varpi)$ generated by
$\varpi$ is a prime ideal. Also, we write $\zeta_K(s), L(s, \chi)$ for the Dedekind zeta function of $K$ and $L$-function attached to a Hecke
character $\chi$, respectively.  Moreover, let $r_K$ denote the residue of $\zeta_K(s)$ at $s = 1$. \newline

  In this paper, we consider the case when $K$ is an imaginary quadratic number field. It is well-known that $K$ can be written as $\mq(\sqrt{d})$ such that $d$ is a negative, square-free integer.  Further $K$ is of class number one (see \cite[(22.77)]{iwakow}) if and only if $d$ takes values from the set
\begin{align} \label{dvalue}
\mathcal{S} = \{-1, -2, -3, -7,-11,-19,-43,-67,-163 \}.
\end{align}

In the ramainder of this section, we consider the case when $K=\mq(\sqrt{d})$ with $d \in \mathcal{S}$.  Write $\chi_{j, m} =\leg {\cdot}{m}_j$, $\chi^{(m)}_{j}=\leg {m}{\cdot}_j$ for the $j$-th order residue symbols defined Section \ref{sec2.4} for certain $m \in \mathcal O_K$ and $j \in \{2, 3, 4, 6\}$ depending on $K$.  Note that $\chi^{(m)}_{j}$ is a Hecke character for every $0 \neq m \in \mathcal O_K$ whenever it is defined while $\chi_{j,m}$ is not since its values may vary at the units of $\mathcal O_K$. To remedy this, we treat the case $j=2$ and the case $j>2$ in two ways. \newline

For $j=2$, we observe that $U_K=\{\pm 1\}$ if $d \in \mathcal{S} \setminus \{ -1, -3 \}$, while $U_K$ is a cyclic group generated by $i$ for $K=\mq(i)$ and $U_K$ is a product of two cyclic groups generated by
$-1$ and $\omega:=(-1+\sqrt{-3})/2$ for $K=\mq(\sqrt{-3})=\mq(\omega)$. As $\omega^3=1$, we see that $\chi_{2,m}(\omega)=1$ for any $(m,2)=1$ in the latter case. We now set
\begin{align} \label{BKCKdef}
 B_K   =\begin{cases}
    2D_K \qquad & 2|d, \\
    \frac {(1-d)D_K}{2} \qquad & 2\nmid d,
\end{cases}
\quad \mbox{and} \quad
 C_K =\begin{cases}
   \displaystyle \{\chi^{(B^2_K)}_2,  \chi^{(iB^2_K)}_2 \}, \qquad & d=-1, \\
   \displaystyle  \{\chi^{(B^2_K)}_2,  \chi^{(-B^2_K)}_2 \}, \qquad & \text{otherwise}.
\end{cases}
\end{align}
  Then we observe, as the characters $\chi^{(B^2_K)}_2$ is trivial modulo $B_K$, that the sum $\frac 12\sum_{\chi \in C_K}\chi(m)$ allows us to pick up the elements $m \in \mathcal O_K$ such that $(m, B_K)=1$ and that $\chi_{2,m}$ is trivial on $U_K$. \newline

  For $j>2$, we note that from our discussions in Sections \ref{sec2.4} and \ref{sect: Lfcn} that the symbol $\chi_{j,m}$ is only defined for $j \in \{3, 4, 6\}$ on $K=\mq(i)$ or $K=\mq(\sqrt{-3})$. Further, $\chi_{j,m}$ is a Hecke character of trivial infinite
type for $m \equiv 1 \bmod S_{K,j}$, where $S_{K,j}$ can be taken to be any positive rational integer that is divisible by $j^2$. For
simplicity, we fix $S_{K,j}=144$ throughout the paper. We shall determine the condition $m \equiv 1 \bmod S_{K,j}$ using ray class group
characters. Here we call that for any integral ideal $\mathfrak{m} \in O_K$, the ray class group $h_{\mathfrak{m} }$ is defined to be $I_{\mathfrak{m} }/P_{\mathfrak{m} }$, where $I_{\mathfrak{m} } = \{
\mathcal{A} \in I : (\mathcal{A}, \mathfrak{m} ) = 1 \}$ and $P_{\mathfrak{m} } = \{(a) \in P : a \equiv 1 \bmod \mathfrak{m}  \}$ with $I$ and $P$ denoting the group of
fractional ideals in $K$ and the subgroup of principal ideals, respectively. \newline

 For any function $f$, let $\widehat f$ stand for its Mellin transform. Let $\Phi(x)$ be a fixed non-negative, smooth function compactly
supported on the set of positive real numbers ${\mr}_+$. In this paper, we evaluate the smoothed first moment of families of Hecke
$L$-functions averaged over all $j$-th order residue symbols for various $j$. For the quadratic case, our result is as follows.
\begin{theorem} \label{Thmfirstmoment}
With the notation as above and assuming the truth of GRH, for any imaginary quadratic number field $K$, we have for $1/2>\Re(\alpha)>0$, all large
$X$ and any $\varepsilon>0$,
\begin{align} \label{FirstmomentSmoothed}
\begin{split}
 \frac {1}{2}\sum_{\chi \in C_{K}} \sum_{0 \neq n \in \mathcal O_K} & L(\tfrac{1}{2}+\alpha, \chi \cdot \chi^{(n)}_{2})\Phi \left( \frac
{N(n)}X \right) \\
=& X\widehat \Phi(1)|U_K|r_K \frac{\zeta_K(1+2\alpha)}{\zeta_K(2+2\alpha)}\prod_{\varpi | B_K}\Big(1+\frac 1{N(\varpi)^{1+2\alpha}}\Big
)\Big(1-\frac 1{N(\varpi)^{2+2\alpha}}\Big )^{-1} \\
& \hspace*{1cm}+ X^{1-\alpha}\widehat \Phi(1-\alpha)\frac {|U_K|r_K}{2} \Big(\frac {2\pi}{\sqrt{|D_K|}}\Big )^{2\alpha} \frac{\Gamma (1-2\alpha)\Gamma (
\alpha)}{\Gamma(1-\alpha)\Gamma (2\alpha)}\cdot\frac{\zeta_K(1-2\alpha)}{\zeta_K(2)}\prod_{\varpi | B_K}\Big(1+\frac 1{N(\varpi)}\Big
)^{-1} \\
& \hspace*{1cm} +O\lz(1+|\alpha|)^{1+\varepsilon}|\alpha-1/2|^{-1}X^{1/4-\Re(\alpha)+\varepsilon}\pz.
\end{split}
\end{align}
\end{theorem}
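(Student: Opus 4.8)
The plan is to follow the double Dirichlet series method of \v Cech, as employed in \cite{G&Zhao2023-3}. For $\Re(s)$ and $\Re(w)$ large, I would introduce
\begin{align*}
 Z(s,w) = \frac12\sum_{\chi\in C_K}\sum_{0\neq n\in\mathcal O_K}\frac{L(s,\chi\cdot\chi^{(n)}_{2})}{N(n)^w}
 = \frac12\sum_{\chi\in C_K}\sum_{0\neq m,\,n}\frac{\chi(m)\leg{n}{m}_2}{N(m)^s\,N(n)^w},
\end{align*}
and insert the Mellin inversion
\begin{align*}
 \Phi\!\lz\frac{N(n)}{X}\pz=\frac{1}{2\pi i}\int_{(c)}\widehat\Phi(w)\lz\frac{X}{N(n)}\pz^{\!w}\,\dif w \qquad (c\text{ large}),
\end{align*}
so that the left side of \eqref{FirstmomentSmoothed} equals $\tfrac{1}{2\pi i}\int_{(c)}\widehat\Phi(w)X^w Z(\tfrac12+\alpha,w)\,\dif w$. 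Everything then reduces to the meromorphic continuation in $w$ of $Z(\tfrac12+\alpha,w)$ and the location and residues of its poles.

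To continue in $w$, I would interchange the order of summation and recognise the inner sum over $n$ as a Hecke $L$-function in $w$: writing $\chi_{2,m}=\leg{\cdot}{m}_2$, one has $\sum_n \leg{n}{m}_2 N(n)^{-w}=L(w,\chi_{2,m})$, whence $Z(s,w)=\tfrac12\sum_{\chi}\sum_m \chi(m)L(w,\chi_{2,m})N(m)^{-s}$. Each $L(w,\chi_{2,m})$ is entire unless $\chi_{2,m}$ is principal, which happens exactly when $m$ is a unit times a perfect square; these terms contribute a $\zeta_K(w)$ and produce a pole at $w=1$. Its residue carries $r_K$ together with the average-value factor $\zeta_K(1+2\alpha)/\zeta_K(2+2\alpha)$ coming from $\sum_{m=\square}\chi(m)N(m)^{-s}$ at $s=\tfrac12+\alpha$, while the coprimality and unit conditions encoded by $C_K$ and $B_K$ generate the local factors $\prod_{\varpi\mid B_K}(\cdots)$ and the factor $|U_K|$.

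For the region to the left of $w=1$, I would apply the functional equation of the inner $L$-function in its own variable $s$, expressed --- following \v Cech --- through Dirichlet series of generalized Gauss sums over $\mathcal O_K$. This replaces $L(s,\cdots)$ by $L(1-s,\cdots)$ at the cost of a conductor factor $N(n)^{1/2-s}=N(n)^{-\alpha}$, turning $Z(\tfrac12+\alpha,w)$ into (archimedean and gamma factors)$\times\widetilde Z(\tfrac12-\alpha,w+\alpha)$, where $\widetilde Z$ is the analogous double series with $s$ replaced by $1-s$. The diagonal pole of $\widetilde Z$, located at $w+\alpha=1$, thus yields a pole of $Z(\tfrac12+\alpha,w)$ at $w=1-\alpha$; evaluating the quadratic Gauss sum over the imaginary quadratic field supplies the archimedean factor $(2\pi/\sqrt{|D_K|})^{2\alpha}$, the ratio $\Gamma(1-2\alpha)\Gamma(\alpha)/(\Gamma(1-\alpha)\Gamma(2\alpha))$, and the zeta factor $\zeta_K(1-2\alpha)$. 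Shifting the contour from $\Re(w)=c$ to $\Re(w)=\tfrac14-\Re(\alpha)+\varepsilon$, the poles at $w=1$ and $w=1-\alpha$ produce the two main terms, and the residual integral is bounded via GRH (a Lindel\"of-strength estimate for $L(\tfrac12+\alpha,\chi\cdot\chi^{(n)}_2)$ on average) together with the vertical decay of $\widehat\Phi$, giving the error term $O((1+|\alpha|)^{1+\varepsilon}|\alpha-\tfrac12|^{-1}X^{1/4-\Re(\alpha)+\varepsilon})$; the factor $|\alpha-\tfrac12|^{-1}$ reflects the confluence of the two poles as $\alpha\to\tfrac12$.

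The hard part will be the meromorphic continuation itself. Because the characters $\chi\cdot\chi^{(n)}_2$ are generally imprimitive, the Gauss-sum Dirichlet series produced by the functional equation must be analysed with care, and one must confirm that no spurious poles intrude in the strip $\tfrac14-\Re(\alpha)<\Re(w)\le1$. The second delicate point is securing the GRH-quality bounds uniformly in $\alpha$ along the shifted contour, which is what ultimately delivers the error term of size $X^{1/4+\varepsilon}$.
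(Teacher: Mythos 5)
Your overall architecture --- the double Dirichlet series, Mellin inversion in the averaging variable, a pole of the continued series at the edge of absolute convergence giving the first main term, a second pole produced by a Gauss-sum functional equation giving the second main term, and a final contour shift to $\Re(w)=\tfrac14-\Re(\alpha)+\varepsilon$ under GRH --- is the paper's strategy (your $(s,w)$ is the paper's $(w,s)$), and you predict both residues correctly. The genuine gap is in the step you defer as ``the hard part'': the continuation itself, and the specific reduction you propose would not deliver it. You apply the functional equation to $L(s,\chi\cdot\chi^{(n)}_{2})$ in the \emph{frozen} variable $s=\tfrac12+\alpha$, collapsing the problem to the one-variable object $\widetilde Z(\tfrac12-\alpha,\,w+\alpha)$. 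Realized through Proposition \ref{Functional equation with Gauss sums}, this is a series of the shape $\sum_{k,n}g_K(k,\chi\cdot\chi^{(n)}_{2})N(k)^{-(1/2-\alpha)}N(n)^{-(w+1/2+\alpha)}$, in which the exponent of the Gauss-sum variable is permanently fixed at $\tfrac12-\alpha$, with real part below $\tfrac12$. After the Soundararajan--Young decomposition $k=k_1k_2^2$, the resulting local factors carry terms of size $N(\varpi)^{-2(1/2-\Re(\alpha))}=N(\varpi)^{-(1-2\Re(\alpha))}$, so the Euler product over the square part diverges for every $\Re(\alpha)>0$: this is exactly the hypothesis $\Re(s)>1$ of Lemma \ref{Estimate For D(w,t)} being violated. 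Nor can you first continue each inner $n$-sum and then sum over $k$, since the continued $n$-sums grow polynomially in $N(k)$ while $N(k)^{-(1/2-\alpha)}$ provides no convergence. With only one complex variable left free, there is no tube-domain argument available to bridge the gap between the region of absolute convergence and the target line.

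The paper avoids this by interchanging the sums first and applying the functional equation to the dual $L$-function $L(s,\chi_{2,m})$ in the \emph{free} contour variable (its \eqref{Functional equation in s}), so that the Gauss-sum double Dirichlet series $C_2(1-s,s+w)$ retains two independent complex variables: it converges absolutely with $\Re(1-s)>1$ (i.e.\ $\Re(s)<0$), Lemma \ref{Estimate For D(w,t)} (under GRH) locates the unique pole on $s+w=\tfrac32$ and supplies polynomial bounds, and Bochner's tube theorem (Theorem \ref{Bochner}) then extends $(s-1)(w-1)(s+w-\tfrac32)A_2(s,w)$ to the convex hull $\Re(s+w)>\tfrac34$, which is what legitimizes the final shift and the error term $X^{1/4-\Re(\alpha)+\varepsilon}$. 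To repair your argument you must keep both variables free until after this continuation; specializing $s=\tfrac12+\alpha$ must be the last step, not the first. A further minor slip: the factor $|\alpha-\tfrac12|^{-1}$ does not reflect a confluence of the poles at $w=1$ and $w=1-\alpha$ (those coalesce as $\alpha\to 0$, where the two main terms' $\zeta_K(1\pm 2\alpha)$ singularities cancel); it comes from the factor $(w-1)$ in the polynomial $p_2(s,w)$ evaluated at the fixed argument $\tfrac12+\alpha$, i.e.\ from the proximity of $\tfrac12+\alpha$ to the point $1$ where the summands with $n$ a perfect square are themselves singular.
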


   As the error term in \eqref{FirstmomentSmoothed} is uniform for positive $\alpha$ that is near $0$, we deduce, upon taking $\alpha \rightarrow 0^+$ and using the Laurent series of $\zeta(1+2\alpha)$ and $\zeta(1-2\alpha)$ centered at $\alpha=0$, the following result on the smoothed first moment of central values of quadratic families of Hecke $L$-functions.

\begin{corollary} \label{Thmfirstmomentatcentral}
Under the same assumptions and conditions of Theorem~\ref{Thmfirstmoment}, we have
\begin{align} \label{Asymptotic for first moment at central}
\begin{split}			
	& 	\frac {1}{2}\sum_{\chi \in C_K} \sum_{0 \neq n \in \mathcal O_K} L(\tfrac{1}{2}, \chi \cdot \chi^{(n)}_{2})\Phi \left( \frac {N(n)}X
\right)   = XQ_K(\log X)+O\lz X^{1/4+\varepsilon}\pz.
\end{split}
\end{align}
  where $Q_K$ is a linear polynomial whose coefficients depend only on $K$, $\widehat \Phi(1)$ and $\widehat \Phi'(1)$.
\end{corollary}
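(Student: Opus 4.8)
The plan is to derive the corollary from Theorem~\ref{Thmfirstmoment} by letting $\alpha\to0^+$, exploiting the uniformity of the error term. First I would observe that for each fixed large $X$ the left-hand side of \eqref{FirstmomentSmoothed}, call it $M(\alpha)$, is holomorphic in $\alpha$ in a neighbourhood of $\alpha=0$: the sum over $n$ is finite because $\Phi$ is compactly supported in $\mr_+$, and for every $n$ the factor $L(\tfrac12+\alpha,\chi\cdot\chi^{(n)}_{2})$ is holomorphic there, a pole of these Hecke $L$-functions occurring only at $\alpha=\tfrac12$ (precisely the source of the factor $|\alpha-1/2|^{-1}$ in the error term). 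Hence $M(\alpha)$ extends continuously to $\alpha=0$, with value the sum of central values appearing in \eqref{Asymptotic for first moment at central}.

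Next I would write the two main terms on the right-hand side of \eqref{FirstmomentSmoothed} as $Xf(\alpha)+X^{1-\alpha}g(\alpha)$. Each of $f,g$ is meromorphic near $\alpha=0$ with at most a simple pole: the only singular factors are $\zeta_K(1+2\alpha)$ in the first term and $\zeta_K(1-2\alpha)$ in the second, while the quotient $\Gamma(\alpha)/\Gamma(2\alpha)\to2$ is regular at the origin. The error term is uniform as $\alpha\to0^+$, since $(1+|\alpha|)^{1+\varepsilon}\to1$, $|\alpha-1/2|^{-1}\to2$ and $X^{1/4-\Re(\alpha)+\varepsilon}\to X^{1/4+\varepsilon}$. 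Because $M(\alpha)$ stays bounded while the error stays bounded, the meromorphic function $Xf(\alpha)+X^{1-\alpha}g(\alpha)$ must itself stay bounded near $\alpha=0$; having at most a simple pole, it therefore has a removable singularity, i.e. the simple poles of the two main terms cancel. This cancellation can also be checked directly from the functional equation of $\zeta_K$, which relates $\zeta_K(1-2\alpha)$ to $\zeta_K(2\alpha)$ and accounts for the $\Gamma$ and $(2\pi/\sqrt{|D_K|})^{2\alpha}$ factors in the second main term, together with the identity $\zeta_K(0)=-\tfrac{\sqrt{|D_K|}}{2\pi}r_K$.

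With the cancellation in hand I would extract the limit by Laurent expansion. Writing $f(\alpha)=f_{-1}/\alpha+f_0+O(\alpha)$ and $g(\alpha)=-f_{-1}/\alpha+g_0+O(\alpha)$, and using $X^{-\alpha}=1-\alpha\log X+O(\alpha^2)$, the poles cancel and the product of the surviving $-f_{-1}/\alpha$ with the linear term of $X^{-\alpha}$ produces a single factor of $\log X$; thus
\[
\lim_{\alpha\to0^+}\lz Xf(\alpha)+X^{1-\alpha}g(\alpha)\pz=X\lz f_{-1}\log X+(f_0+g_0)\pz=XQ_K(\log X),
\]
a linear polynomial in $\log X$. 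The leading coefficient $f_{-1}$ is the common residue, depending only on $K$ and $\widehat\Phi(1)$, while the constant term $f_0+g_0$ collects the regular parts and in particular picks up $\widehat\Phi'(1)$ through $\widehat\Phi(1-\alpha)=\widehat\Phi(1)-\alpha\widehat\Phi'(1)+O(\alpha^2)$ in the second term, as asserted. Passing to the limit in \eqref{FirstmomentSmoothed} and using the uniform error bound then yields $M(0)=XQ_K(\log X)+O(X^{1/4+\varepsilon})$.

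I expect the only genuine work to be the explicit bookkeeping of the constant term $f_0+g_0$: this requires the first two Laurent coefficients of $\zeta_K(1\pm2\alpha)$ (hence the analogue of the Euler--Kronecker constant of $K$), the derivatives at $\alpha=0$ of the $\Gamma$-quotient and of the finite Euler products over $\varpi\mid B_K$, and the term $\widehat\Phi'(1)$. None of this is deep, but it is where the precise shape of $Q_K$ is pinned down; the conceptual step---that the apparent pole at $\alpha=0$ is forced to be removable by the holomorphy of $M(\alpha)$ and the uniformity of the error---is what legitimises the passage to the central point.
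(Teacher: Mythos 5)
Your proposal is correct and is essentially the paper's own argument: the authors likewise deduce the corollary by letting $\alpha\to0^+$ in Theorem~\ref{Thmfirstmoment}, relying on the uniformity of the error term near $\alpha=0$ and on the cancellation of the simple poles of the two main terms, with the $\log X$ arising from $X^{-\alpha}=1-\alpha\log X+O(\alpha^2)$. You simply spell out the Laurent-expansion bookkeeping that the paper leaves implicit (and, like the paper, you do not compute $Q_K$ explicitly), so there is nothing to add.
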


  We point out here that the error term in \eqref{Asymptotic for first moment at central} above matches with the conjectured size for the case
of quadratic Dirichlet $L$-functions given in \cite{DoHo} and our proof of Theorem \ref{Thmfirstmoment} implies that
\eqref{Asymptotic for first moment at central} holds with the error term $O\lz  X^{1/2+\varepsilon}\pz$ unconditionally without resorting to GRH. We omit the explicit
expression of $Q_K$ here since our main focus is the error term. Note also that a result that is similar to Corollary
\ref{Thmfirstmomentatcentral} is given in \cite[Theorem 1.1]{G&Zhao2023-5}.  The proofs of both Theorems~\ref{Thmfirstmoment} and
\cite[Theorem 1.1]{G&Zhao2023-5} use double Dirichlet series while our proof of Theorem \ref{Thmfirstmoment} differs from that of
\cite[Theorem 1.1]{G&Zhao2023-5} in the way that instead of seeking to develop enough functional equations to obtain meromorphic continuation
of the series involved to the the entirety of $\mc^2$, we make a crucial use of the functional equation of a general (not necessarily primitive)
quadratic Hecke $L$-function established in Proposition \ref{Functional equation with Gauss sums} below to convert the original series to a
dual series involving with quadratic Gauss sums. A careful analysis using the ideas of K. Soundararajan and M. P. Young in \cite{S&Y} then
allows us to extend the related series meromorphically to a region that is large enough for our purpose. \newline

  The same approach also allows us to study the smoothed first moment of families of higher order Hecke $L$-functions unconditionally. For
this, we have the following result.
\begin{theorem}
\label{Thmfirstmomentjlarge}
	With the notation as above, let $K=\mq(i)$ or $\mq(\sqrt{-3})$ and let $j=4$ for $K=\mq(i)$, $j=3$ or $6$ for $K=\mq(\sqrt{-3})$.  We have
for $1/2>\Re(\alpha)>0$, all large $X$ and any $\varepsilon>0$,
\begin{align} \label{FirstmomentSmoothedjlarge}
\begin{split}
&  \frac {1}{\#h_{(S_{K,j})}}\sum_{\chi \bmod {S_{K,j}}} \sum_{0 \neq n \in \mathcal O_K} L(\tfrac{1}{2}+\alpha, \chi \cdot \chi^{(n)}_{j})\Phi
\left( \frac {N(n)}X \right) \\
=&  X\widehat \Phi(1) \frac {r_K}{\#h_{(S_{K,j})}}\sum_{\chi \bmod {S_{K,j}}} \frac{L(j(\tfrac{1}{2}+\alpha), \chi^j)}{L(1+j(\tfrac{1}{2}+\alpha),
\chi^j)}+O\lz(1+|\alpha|)^{1+\varepsilon}|\alpha-1/2|^{-1}  X^{1/2+1/j-\alpha+\varepsilon}\pz.
\end{split}
\end{align}
\end{theorem}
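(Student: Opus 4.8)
The plan is to run the same double Dirichlet series argument used for Theorem~\ref{Thmfirstmoment}, the point being that for $j\geq 3$ no appeal to GRH is required. First I would use Mellin inversion to write $\Phi(N(n)/X)=\frac{1}{2\pi i}\int_{(c)}\widehat\Phi(w)X^wN(n)^{-w}\,\dif w$ for large $c$, so that the left-hand side of \eqref{FirstmomentSmoothedjlarge} becomes
\begin{align*}
 \frac{1}{2\pi i}\int_{(c)}\widehat\Phi(w)X^w Z(\tfrac12+\alpha,w)\,\dif w,
 \qquad
 Z(s,w)=\frac{1}{\#h_{(S_{K,j})}}\sum_{\chi \bmod S_{K,j}}\ \sum_{0\neq n\in\mathcal O_K}\frac{L(s,\chi\cdot\chi^{(n)}_j)}{N(n)^w}.
\end{align*}
Expanding the inner Hecke $L$-function into its Dirichlet series in $m$ and interchanging the $m$- and $n$-summations writes $Z(s,w)$ as a sum over $m$ of the twisted series $\sum_{n}\leg{n}{m}_j N(n)^{-w}$, the $n$-sum now being controlled by the character $\leg{\cdot}{m}_j$.

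The next step is the meromorphic continuation of $Z(s,w)$ in $w$, obtained by evaluating this $n$-series according to whether $\leg{\cdot}{m}_j$ is trivial. When $m$ is a unit times a perfect $j$-th power the character is principal and the $n$-series equals $\zeta_K(w)$ times a finite Euler product over the primes $\varpi\mid m$, contributing a simple pole at $w=1$; for all remaining $m$ it is the Hecke $L$-function $L(w,\psi_m)$ attached to the nontrivial character $\psi_m:=\leg{\cdot}{m}_j$, which is entire. Thus $Z(s,w)$ continues to $\Re(w)>\sigma_0$ for some $\sigma_0<1$ with a single pole at $w=1$. Moving the $w$-contour to the left across $w=1$ and collecting the residue $r_K$ of $\zeta_K$ produces the main term: summing the residual Euler product over the $j$-th powers $m=u\ell^j$ assembles, after the standard manipulation, into the product of $L$-functions in $\chi^j$ displayed on the first line of \eqref{FirstmomentSmoothedjlarge} (one checks here that $j(\tfrac12+\alpha)>1$, so these values are finite), multiplied by $\widehat\Phi(1)X$.

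It remains to bound the shifted integral, whose integrand is governed by the non-$j$-th-power part $\sum_{m}\chi(m)N(m)^{-s}L(w,\psi_m)$. This is where the functional equation in the spirit of \cite{Cech1}, as in Proposition~\ref{Functional equation with Gauss sums} and its $j$-th order analogue, is used: applying the Hecke functional equation to each $L(w,\psi_m)$ replaces it by $L(1-w,\overline{\psi_m})$ together with the Gauss sum $g(\psi_m)$ of the $j$-th order character modulo $m$, so that on the dual side the $m$-series becomes a Dirichlet series $\sum_m g_j(m)\chi(m)N(m)^{-s'}$ in the $j$-th order Gauss sums. Following the analysis of Soundararajan and Young \cite{S&Y} one continues and bounds this Gauss-sum series, and optimising the position of the contour yields the error $O\lz(1+|\alpha|)^{1+\varepsilon}|\alpha-\tfrac12|^{-1}X^{1/2+1/j-\alpha+\varepsilon}\pz$, with the archimedean factors and a nearby pole of the dual series accounting for the $|\alpha-\tfrac12|^{-1}$ and the polynomial dependence on $|\alpha|$. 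The hard part will be precisely this last estimate for $\sum_m g_j(m)\chi(m)N(m)^{-s'}$: unlike the quadratic case there is no GRH to call upon, and this series carries the genuine Patterson--Heath-Brown bias, its first pole sitting a distance $1/j$ inside the critical strip rather than on its edge. It is exactly this bias that forces the exponent $1/2+1/j$, and handling it also requires disentangling the conductors of $\psi_m$ for non-squarefree and imprimitive $m$ before the Gauss-sum bounds can be brought to bear.
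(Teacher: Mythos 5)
Your overall architecture coincides with the paper's: form the double Dirichlet series, interchange the $m$- and $n$-sums, extract the main term from the residue of $\zeta_K$ at the edge of convergence in the Mellin variable (your $w$, the paper's $s$), and convert the remaining non-$j$-th-power part into a Dirichlet series in Gauss sums via the functional equation of Proposition \ref{Functional equation with Gauss sums}. The gap sits exactly at the step you yourself defer as ``the hard part'': the meromorphic continuation and polynomial growth bounds for $\sum_m \chi(m)g_K(q,\chi_{j,m})N(m)^{-s}$. You propose to handle it ``following the analysis of Soundararajan and Young,'' but that analysis is specific to quadratic Gauss sums: it rests on the multiplicativity of $G_K(k,\chi_{2,m})$ in $m$ (Lemma \ref{GausssumMult}) and the resulting identification of the Gauss-sum Dirichlet series with a ratio of quadratic $L$-functions --- which is precisely why GRH enters for $j=2$ and only there. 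For $j\ge 3$ no such identification exists; $g_K(q,\chi_{j,m})$ is not governed by an $L$-function, and the series $h_j(q,s;\chi)$ of \eqref{h} is a genuinely metaplectic object. The paper closes this step by invoking Patterson's theorem (Lemma \ref{lem1}), which gives the continuation with the lone pole at $s=1+1/j$ together with the bound \eqref{hbound}; that input is what produces both the admissible region for the contour shift and the exponent $1/2+1/j$. You correctly identify the pole at distance $1/j$ and the resulting bias, but without Patterson's estimate (or an equivalent) the argument cannot be completed, and a Soundararajan--Young-type computation will not supply it.

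A secondary structural point: after applying \eqref{Functional equation in s}, the dual representation of the non-$j$-th-power part converges only in a region where the Mellin variable has real part well to the left (the paper's $S_{4,j}$ requires $\Re(s)<0$), while the original representation needs real part exceeding $1$. Simply ``moving the contour'' therefore leaves the intermediate strip --- which contains the final line of integration at $1/2+1/j-\Re(\alpha)+\varepsilon$ --- uncovered by either representation. The paper bridges this with Bochner's tube theorem (Theorem \ref{Bochner}) and Proposition \ref{Extending inequalities}, taking convex hulls of the two regions to propagate both holomorphy and the growth estimates into the strip where the integral is ultimately evaluated. Some such convexity argument must appear in your write-up; it is not a routine omission, since without it the integrand is not even defined on the shifted line.
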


The averages of various families of Hecke $L$-functions similar to those considered in Theorem \ref{Thmfirstmomentjlarge} were evaluated in \cites{FaHL, Luo, G&Zhao1, G&Zhao2023-4}. Our treatment here is more direct and yields better error terms. Unlike the quadratic case, the proof of Theorem \ref{Thmfirstmomentjlarge} relies heavily on the work of S. J. Patterson \cite{P} that gives the Gauss sums associated to higher order Hecke characters are small on average.

\section{Preliminaries}
\label{sec 2}

\subsection{Imaginary quadratic number fields}
\label{sect: Kronecker}

   Recall that for an arbitrary number field $K$, we denote $\mathcal{O}_K, U_K$ and $D_K$ for the ring of integers, the group of units and the discriminant of $K$, respectively. For any positive rational integer $n$, we say an element $c \in \mathcal{O}_K$ is $n$-th power free if no $n$-th power of any prime divides $c$. \newline

In the rest of this section, we let $K$ be an imaginary quadratic number field. Unless otherwise mentioned, the following facts concerning an imaginary quadratic number field $K$ can be found in \cite[Section 3.8]{iwakow}. \newline

  The ring of integers $\mathcal{O}_K$ is a free $\mz$ module (see \cite[Section 3.8]{iwakow}) such that $\mathcal{O}_K=\mz+\omega_K \mz$, where
\begin{align*}
   \omega_K & =\begin{cases}
\displaystyle     \frac {1+\sqrt{d}}{2} \qquad & d \equiv 1 \pmod 4, \\ \\
     \sqrt{d} \qquad & d \equiv 2, 3 \pmod 4.
    \end{cases}
\end{align*}
If $K=\mq(\sqrt{-3})$, we also have $\mathcal{O}_K=\mz+\omega \mz$, recalling that $\omega=(-1+\sqrt{-3})/2$. \newline

   The group of units are given by
\begin{align}
\label{Uk}
   U_K & =\begin{cases}
     \{ \pm 1 , \pm i \} \qquad & d=-1, \\
     \{ \pm 1, \pm \omega_K, \pm \omega_K^2 \}=\{ \pm 1, \pm \omega, \pm \omega^2 \} \qquad & d=-3,  \\
     \{ \pm 1  \} \qquad & \text{other $d$}.
    \end{cases}
\end{align}

  Moreover, the discriminants are given by
\begin{align*}
   D_K & =\begin{cases}
     d \qquad & \text{if $d \equiv 1 \pmod 4$ }, \\
     4d \qquad & \text{if $d \equiv 2, 3 \pmod 4$ }.
    \end{cases}
\end{align*}

If we write $n=a+b\omega_K$ with $a, b \in \mz$, then
\begin{align*}
 N(n)=\begin{cases}
   \displaystyle  a^2+ab+b^2\frac {1-d}{4} \qquad & d \equiv 1 \pmod 4,  \\
   \displaystyle  a^2-db^2 \qquad &  d \equiv 2, 3 \pmod 4.
    \end{cases}
\end{align*}

   Denote the Kronecker symbol in $\mz$ by $\leg {\cdot}{\cdot}_{\mz}$.  Then one factors a prime ideal $(p) \in \mz$ in $\mathcal O_K$ in the following way:
\begin{align*}
    & \leg {D_K}{p}_{\mz} =0, \text{then $p$ ramifies}, \ (p)=\mathfrak{p}^2 \quad \text{with} \quad N(\mathfrak{p})=p, \\
    & \leg {D_K}{p}_{\mz} =1, \text{then $p$ splits}, \ (p)=\mathfrak{p}\overline{\mathfrak{p}} \quad \text{with} \quad  \mathfrak{p} \neq \overline{\mathfrak{p}}
    \quad \text{and} \quad  N(\mathfrak{p})=p, \\
    & \leg {D_K}{p}_{\mz}=-1, \text{then $p$ is inert}, \ (p)=\mathfrak{p} \quad \text{with} \quad  N(\mathfrak{p})=p^2.
\end{align*}

%%----------------------------------------------------------------------------
\subsection{Residue symbols}
\label{sec2.4}
%%----------------------------------------------------------------------------

Let $K^{\times}$ be the multiplicative group $K \backslash \{0 \}$. Recall that we reserve the letter $\varpi$ for a prime in $\mathcal O_K$. Let $j$ be a positive rational integer and $P_K$ be the set of prime ideals $(\varpi) \in \mathcal O_K$ such that $\varpi | j$.
Let $\mu_j(K)=\{ \zeta \in K^{\times}: \zeta^j=1 \}$ and suppose that $\mu_j(K)$ has $j$ elements.  For any $q \in \mathcal{O}_K$, let $\left
(\mathcal{O}_K / (q) \right )^{\times}$ denote the group of reduced residue classes modulo $q$, i.e. the multiplicative group of invertible
elements in $\mathcal{O}_K / (q)$. We now define the $j$-th order residue symbol $\leg {\cdot}{\cdot}_j$ in $\mathcal{O}_K $.  Note that the
discriminant of $x^j-1$ is divisible only by the primes in $P_K$. It follows that for any prime $\varpi \in \mathcal{O}_K,  \varpi \not \in
P_K$, the map
\begin{align*}
    \zeta \mapsto \zeta \pmod \varpi : \mu_j(K) \rightarrow \mu_j(\mathcal{O}_K/(\varpi))=\{ \zeta \in (\mathcal{O}_K/(\varpi))^{\times}:
\zeta^j=1 \}
\end{align*}
   is bijective. For such $\varpi$,  we define for $a \in \mathcal{O}_K$, $(a, \varpi)=1$ by $\leg{a}{\varpi}_j \equiv
a^{(N(\varpi)-1)/j} \pmod{\varpi}$, with $\leg{a}{\varpi}_j \in \mu_j(K)$. When $\varpi | a$, we define $\leg{a}{\varpi}_j =0$.   The $j$-th
order residue symbol is extended to any $c \in \mathcal O_K$ with $(c, j)=1$ multiplicatively. Here we define $\leg {\cdot}{c}_j=1$ when $c$
is a unit in $\mathcal{O}_K$. For any $1 \leq l \leq j$,  we note that $\leg {\cdot}{c}^l_j=$ is a $j/(j,l)$-th order residue symbol. Also, we
note that whenever $\leg {\cdot}{n}_j$ is defined for an element $n \in \mathcal O_K$, we deduce readily from the definition that we have for
any $u \in U_K$,
\begin{align}
\label{2.05}
  \leg {u}{n}_j=u^{(N(n)-1)/j}.
\end{align}

Our discussions above apply in particular to the number field $K=\mq(\sqrt{d})$ with $d$ taking values from \eqref{dvalue}. It follows from our discussions above and \eqref{Uk} that the $j$-th order residue symbol is defined for $j=2$ for all such $d$, for $j=4$ when $d=-1$ and for $j=3, 6$ when $d=-3$. \newline

   It is well-known that when a number field $K$ is of class number one, then every ideal of $\mathcal O_K$ is principal, so that one may fix
a unique generator for each non-zero ideal and we call these generators primary elements. As this is the case for $K=\mq(\sqrt{d})$ with $d$ taking values in \eqref{dvalue}, we now describe our choice of primary elements for these $K$. \newline

  For $K=\mq(i)$, we note that $(1+i)$ is the only ideal above the rational ideal $(2) \in \mq$, and we define $1+i$ to be primary. We also define for any $n \in \mathcal O_K$ with $(n, 2)=1$ to be primary if and only if $n \equiv 1 \pmod {(1+i)^3}$ by observing that the group $\left (\mathcal{O}_K / ((1+i)^3) \right )^{\times}$ is isomorphic to $U_K$.  We then use the above defined primary elements to generator all primary elements in $\mathcal O_K$ multiplicatively. \newline

 For $K=\mq(\sqrt{-3})$, we actually define two sets of primary elements for different purposes. To distinguish them, we shall call the elements in the first set by primary elements and the elements in the second set by $E$-primary elements. For both sets, we define $1-\omega$ to be
primary by noting that $(1-\omega)$ is the only ideal above the rational ideal $(3) \subset \mq$ (see \cite[Proposition 9.1.4]{I&R}). Next we
define for any $n \in \mathcal O_K$ with $(n, 3)=1$ to be primary (hence in the first set) if and only if $n \equiv 1 \pmod {3}$ by
observing that the group $\left (\mathcal{O}_K / (3) \right )^{\times}$ is isomorphic to $U_K$. Now for the second set, we notice that the rational ideal $(2)$ is inert in $\mathcal O_K$ and we define $2$ to be $E$-primary. For the rest, we directly define for any $n=a+b\omega \in \mathcal O_K$ with $a, b \in
\mz$ and $(n, 6)=1$ to be $E$-primary (hence in the second set) if $n \equiv \pm 1 \pmod 3$ and satisfies
\begin{align*}
%%\label{cubicE}
\begin{split}
   & a+b \equiv 1 \pmod 4, \quad  \text{if} \quad 2 | b,  \\
   & b \equiv 1 \pmod 4, \quad  \text{if} \quad 2 | a,  \\
   & a \equiv 3 \pmod 4, \quad  \text{if} \quad 2 \nmid ab.
\end{split}
\end{align*}
  Our definition of $E$-primary elements follows from  the notations in \cite[Section 7.3]{Lemmermeyer}. It is shown in \cite[Lemma
7.9]{Lemmermeyer} that any $n$ with $(n,6)=1$ is $E$-primary if and only if $n^3=c+d\omega$ with $c, d \in \mz$ such that $6 | d$ and $c+d
\equiv 1 \pmod 4$. This implies that products of $E$-primary elements are again $E$-primary. We now extend the above defined primary or $E$-primary elements to define all primary or $E$-primary  elements in $\mathcal O_K$ multiplicatively.  It is easy to see that each ideal in $\mathcal O_K$ does have a unique primary or $E$-primary generator. \newline

   Now, for other values of $d$ given in \eqref{dvalue}, there is no classical choice for primary elements in the corresponding number field $K=\mq(\sqrt{d})$. This is partially due to the fact that in this case, the group of units $U_K$ does not span the group of any reduced residue classes.  Our definition of primary elements to be described in what follows in this case is to ensure that they (when applicable) satisfy the quadratic reciprocity law given in \eqref{quadreciKprim} below, which in turn makes sure that the Gauss sum $G_K(k,\chi_{2, n})$ defined in \eqref{GKdef} below is multiplicative with respect to $n$. \newline

   We start with the following quadratic reciprocity law concerning any imaginary quadratic number field, a result that is taken from \cite[Theorem 12.17]{Lemmermeyer05}.
\begin{lemma}
\label{Quadrecgen}
   Let $K=\mq(\sqrt{d})$ with $d$ being negative and square-free. For any $\alpha \in \mathcal O_K, (\alpha, 2)=1$, we define elements $t_{\alpha}, t'_{\alpha} \in \{ 0,1 \}$ by
\begin{align}
\label{alphadef}
   \alpha \equiv  \begin{cases}
   \sqrt{d}^{t_{\alpha}} (1+2\sqrt{d})^{t'_{\alpha}}c^2  \pmod 4 \qquad & \text{if} \qquad d \equiv 1 \pmod 2,\\
    (1+\sqrt{d})^{t_{\alpha}} (-1)^{t'_{\alpha}}c^2 \pmod 4 \qquad & \text{if} \qquad d \equiv 0 \pmod 2,
\end{cases}
\end{align}
   for some $c \in \mathcal O_K$. Then the quadratic reciprocity law for co-prime elements $n,m$ with $(nm, 2)=1$ is given by
\begin{align*}
%%\label{quadreciK}
    \leg {n}{m}_2\leg{m}{n}_2=(-1)^{T}, \quad \mbox{where} \quad
   T \equiv  \begin{cases}
   t_{n}t'_{m}+ t'_{n}t_{m}+ t_{n}t_{m}\pmod 2 \qquad & \text{if} \qquad d \equiv 1, 2 \pmod 4,\\
    t_{n}t'_{m}+ t'_{n}t_{m} \pmod 2 \qquad & \text{if} \qquad d \equiv 3 \pmod 4.
\end{cases}
\end{align*}
\end{lemma}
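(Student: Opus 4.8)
The plan is to derive this reciprocity law from the general quadratic power‑reciprocity law in $K$, which is where the cited result \cite[Theorem 12.17]{Lemmermeyer05} ultimately comes from. Since $\mu_2=\{\pm1\}\subset K$ for every field $K$, the quadratic power‑reciprocity law — itself a consequence of Hilbert's product formula for the Hilbert symbol — is available and expresses the reciprocity "defect" as a product of local symbols,
\[
\leg{n}{m}_2\leg{m}{n}_2=\prod_{v\mid 2\infty}(n,m)_v ,
\]
where $(n,m)_v\in\{\pm1\}$ is the local quadratic Hilbert symbol at the place $v$ (for $v\nmid 2\infty$ coprime to $nm$ the factor is trivial, which is what produces the two residue symbols on the left). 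The decisive simplification for an imaginary quadratic field is that its only archimedean place is complex, so $(n,m)_v=1$ there, and the entire product is therefore carried by the finitely many dyadic places $v\mid 2$. The first step is thus to record, case by case according to $d\bmod 8$, how $2$ decomposes in $\mathcal O_K$ — ramified when $d\equiv 2,3\pmod 4$, and split or inert when $d\equiv 1\pmod 4$ — so as to know exactly which local factors appear.

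Next I would show that each dyadic factor $(n,m)_{\mathfrak p}$ depends on $n$ and $m$ only through their classes modulo $4$ and modulo squares. The local symbol is a nondegenerate symmetric $\mathbb F_2$‑bilinear pairing on $K_{\mathfrak p}^\times/(K_{\mathfrak p}^\times)^2$, trivial on squares and continuous, so for global $n,m$ coprime to $2$ it factors through the finite group $(\mathcal O_K/4)^\times$ modulo squares; verifying that level $4$ is exactly the right modulus is itself part of the local computation. The content of the parametrization in \eqref{alphadef} is precisely that this square‑class group is generated by the displayed representatives: $\sqrt d$ and $1+2\sqrt d$ when $d$ is odd, and $1+\sqrt d$ and $-1$ when $d$ is even. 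I would check this directly from the $\mathbb Z$‑basis $\{1,\omega_K\}$ of $\mathcal O_K$, confirming that every $\alpha$ with $(\alpha,2)=1$ is congruent modulo $4$ to one of these monomials times a square $c^2$, and hence that $t_\alpha,t'_\alpha\in\{0,1\}$ are well‑defined coordinates on the square‑class group.

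The reciprocity formula then reduces to a bilinear computation on generators. Both sides are bi‑multiplicative in $n$ and $m$ — the left by multiplicativity of the residue symbol, the right by bilinearity of the Hilbert symbol — and each factor depends only on the square classes of $n$ and $m$. It therefore suffices to evaluate the dyadic symbols on the finitely many pairs of generators $\{\sqrt d,\,1+2\sqrt d\}$ or $\{1+\sqrt d,\,-1\}$, assemble the resulting values into a symmetric bilinear form in the coordinates $(t_n,t'_n)$ and $(t_m,t'_m)$, and match it against the stated $T$ in the two residue cases $d\equiv 1,2\pmod 4$ and $d\equiv 3\pmod 4$. The hard part will be exactly this last, purely local step: the dyadic symbol is wildly ramified and lacks the clean closed form of the tame symbols, so its values on the generators must be computed by hand, and it is here that the dependence on $d\bmod 8$, the split between the odd and even cases, and the sign $(-1)^T$ genuinely originate. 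I would organize this computation precisely as in the proof of \cite[Theorem 12.17]{Lemmermeyer05}.
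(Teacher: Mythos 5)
The paper does not prove this lemma at all: it is imported verbatim, with the sentence ``a result that is taken from \cite[Theorem 12.17]{Lemmermeyer05}'' serving as the entire justification. So there is no in-paper argument to compare against; the relevant benchmark is whether your outline would actually reconstitute Lemmermeyer's theorem. Your strategy --- write $\leg{n}{m}_2\leg{m}{n}_2$ as a product of quadratic Hilbert symbols over the places dividing $2\infty$ via the product formula, discard the archimedean factor because $K$ is imaginary, reduce the dyadic factors to a symmetric $\mathbb F_2$-bilinear form on square classes represented mod $4$, and evaluate on generators --- is indeed the standard route and almost certainly the one underlying the cited result, so the approach is sound.

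The genuine gap is that you never perform the one step in which all the content of the lemma resides. The specific shape of $T$, the appearance of the extra term $t_nt_m$ exactly when $d\equiv 1,2\pmod 4$, and the correctness of the chosen generators $\{\sqrt d,\,1+2\sqrt d\}$ versus $\{1+\sqrt d,\,-1\}$ all come out of the explicit evaluation of the wild dyadic Hilbert symbols, which depends on how $(2)$ splits, ramifies, or stays inert (hence on $d\bmod 8$). You defer precisely this computation to ``the proof of \cite[Theorem 12.17]{Lemmermeyer05}'' --- that is, to the theorem you are supposed to be proving --- which makes the argument circular at its decisive point. Two subsidiary claims are also asserted rather than checked: that the dyadic symbol on prime-to-$2$ elements factors through $(\mathcal O_K/4)^\times$ modulo global squares (local squares of units are detected by congruences modulo $4\varpi$, so showing that level $4$ and only \emph{two} bits $t_\alpha,t'_\alpha$ suffice is not automatic), and that the listed monomials really do exhaust the square classes for every $d$. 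Until the generator-by-generator table of dyadic symbol values is actually produced and matched against the stated $T$, this is a correct plan rather than a proof.
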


  We apply the above lemma to the case when $d$ takes the values given in \eqref{dvalue} but $d \neq -1, -3$. Note that those values of $d$ satisfy $d \equiv 1, 2 \pmod 4$. When $d \equiv 1 \pmod 4$, we write $d=4k+1$ and summarize from \cite{Lemmermeyer05} the following table for the values of $t_{\alpha}, t'_{\alpha}$:
\begin{align}
\label{tabled1}
   \begin{array}{|c|c|c|c|c|}     \hline
 \alpha \pmod 4,  2 \nmid k & 1, k+\omega_K & -1, -(k+\omega_K) & 1+2\omega_K, -k+\omega_K & -(1+2\omega_K),  -(-k+\omega_K) \\ 
 & k+1-\omega_K & -(k+1-\omega_K) & -k+1+\omega_K & -(-k+1+\omega_K) \\ \hline
  \alpha \pmod 4, 2 | k  & 1 & -1 & 1+2\omega_K & -(1+2\omega_K)  \\ \hline
    N(\alpha) \equiv & 1 \pmod 4 & 1 \pmod 4 & -1 \pmod 4 & -1 \pmod 4 \\
    t_{\alpha} & 0 & 0 & 1 &1 \\
     t'_{\alpha} & 0 & 1 & 1 &0 \\
     \hline
   \end{array}
\end{align}
  Here we point out that one may compute $\alpha \pmod 4$ using the discussions given in \cite[Section 2.2]{G&Zhao2309}. \newline

   Similarly, in the case of $d \equiv 2 \pmod 4$, \cite{Lemmermeyer05} gives the table below:
\begin{align}
\label{tabled2}
   \begin{array}{|c|c|c|c|c|}
    \hline
    \alpha \pmod 4& 1, 3+2\sqrt{d} & -1, -(3+2\sqrt{d}) & \pm 1+\sqrt{d} & -(\pm 1+\sqrt{d}) \\ \hline
    N(\alpha) \equiv & 1 \pmod 4 & 1 \pmod 4 & -1 \pmod 4 & -1 \pmod 4 \\
    t_{\alpha} & 0 & 0 & 1 &1 \\
     t'_{\alpha} & 0 & 1 & 1 &0 \\
     \hline
   \end{array}
\end{align}

   We now give a description of the primary elements of $K=\mq(\sqrt{d})$ with $d \in \mathcal{S} \setminus \{ -1, -3 \}$.  We first observe that if $d \neq -2$, then $d \equiv 1 \pmod 4$.  Also, the residue classes of $\alpha \pmod 4$ listed in Columns $3$ and $5$ of both tables \eqref{tabled1} and \eqref{tabled2} are negatives of those listed in column $2,4$ of the corresponding tables. As we have $U_K=\{ \pm 1\}$  by \eqref{Uk} for all $K=\mq(\sqrt{d})$ when $d$ takes the values given in \eqref{dvalue} but $d \neq -1, -3$, we see from the above tables that any ideal $(\alpha)$ that is co-prime to $2$ has a unique generator that is congruent to one of those listed in Columns $2$ and $4$. We also call these generators primary.  We note from \eqref{alphadef} that the set of primary elements is closed under multiplication. \newline

  Moreover, our discussions in Section \ref{sect: Kronecker} yield that for these $d$'s given in \eqref{dvalue}, $(2)=(\omega_K)^2$ for $d=-2$, $(2)=(\omega_K)({\overline{\omega_K}})$ for $d =-7$, and $(2)$ is inert otherwise. We thus define $\omega_K$ to be primary when $d=-2, -7$ and $2$ to be primary for other values of $d$. We then use the above defined primary elements to generator all primary elements in $\mathcal O_K$ multiplicatively. \newline

   Now, one checks readily from \eqref{tabled1} and \eqref{tabled2} and Lemma \ref{Quadrecgen} that for primary elements $n, m$ with $(nm, 2)=1$,
\begin{align*}
   T \equiv  \frac {N(n)-1}{2}\cdot \frac {N(m)-1}{2} \pmod 2.
\end{align*}

From this and Lemma \ref{Quadrecgen} the following quadratic reciprocity law concerning primary elements, a result of independent interest.
\begin{lemma} \label{Quadrecgenprim}
   Let $K=\mq(\sqrt{d})$ with $d \in \mathcal{S} \setminus \{ -1, -3 \}$. For any co-prime primary elements $n,m$ with $(nm, 2)=1$, we have
\begin{align} \label{quadreciKprim}
    \leg {n}{m}_2\leg{m}{n}_2=(-1)^{(N(n)-1)/2\cdot (N(m)-1)/2}.
\end{align}
\end{lemma}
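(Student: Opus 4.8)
The plan is to deduce the reciprocity law directly from the general formula in Lemma~\ref{Quadrecgen}, exploiting the fact that for primary elements the invariants $t_\alpha, t'_\alpha$ are completely pinned down by $N(\alpha)\bmod 4$ through the tables \eqref{tabled1} and \eqref{tabled2}. First I would record that $N(\alpha)$ is odd whenever $(\alpha,2)=1$: since a rational prime $p$ divides $N(\alpha)$ precisely when some prime of $\mathcal O_K$ above $p$ divides $\alpha$, the coprimality $(\alpha,2)=1$ forces $2\nmid N(\alpha)$, so that $(N(\alpha)-1)/2$ is a genuine integer and its parity is well defined.

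Next I would read off from the tables that a primary element $\alpha$, being congruent modulo $4$ to one of the representatives in Columns~$2$ and $4$, satisfies either $N(\alpha)\equiv 1\pmod 4$ with $(t_\alpha,t'_\alpha)=(0,0)$, or $N(\alpha)\equiv -1\pmod 4$ with $(t_\alpha,t'_\alpha)=(1,1)$. In both cases $t_\alpha=t'_\alpha$, and a direct comparison of the two rows gives the single clean congruence
\begin{align*}
   t_\alpha \equiv t'_\alpha \equiv \frac{N(\alpha)-1}{2} \pmod 2,
\end{align*}
valid uniformly for every primary $\alpha$ coprime to $2$. This is the key input.

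The main point is that for all $d\in\mathcal{S}\setminus\{-1,-3\}$ one has $d\equiv 1$ or $2\pmod 4$ (indeed $d=-2$ is the only value with $d\equiv 2$, and the remaining values of \eqref{dvalue} are all $\equiv 1\pmod 4$), so only the first case of the exponent $T$ in Lemma~\ref{Quadrecgen} ever occurs. Writing $\epsilon_\alpha:=(N(\alpha)-1)/2\bmod 2$ and substituting $t_n=t'_n=\epsilon_n$, $t_m=t'_m=\epsilon_m$ into $T\equiv t_n t'_m+t'_n t_m+t_n t_m$ collapses the right-hand side to $3\epsilon_n\epsilon_m\equiv \epsilon_n\epsilon_m\pmod 2$. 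Since a product of integers is even iff one of the factors is, this gives $\epsilon_n\epsilon_m\equiv \tfrac{N(n)-1}{2}\cdot\tfrac{N(m)-1}{2}\pmod 2$, which is exactly the intermediate congruence for $T$ asserted just before the statement. Feeding this back into Lemma~\ref{Quadrecgen} yields
\begin{align*}
   \leg{n}{m}_2\leg{m}{n}_2=(-1)^{T}=(-1)^{(N(n)-1)/2\cdot(N(m)-1)/2},
\end{align*}
which is \eqref{quadreciKprim}.

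The computation is short, and the only genuine obstacle is bookkeeping: I must confirm that primary elements really do land in Columns~$2$ and $4$ (so that the clean relation $t_\alpha=t'_\alpha$ holds and the collapse of $T$ is valid), and that the case $d\equiv 3\pmod 4$ — which would replace the formula for $T$ by $t_n t'_m+t'_n t_m$ and force $T\equiv 0$, spoiling the identity — never arises for $d\in\mathcal{S}\setminus\{-1,-3\}$. Both facts are settled by the explicit list \eqref{dvalue} together with the construction of primary elements described immediately before the lemma.
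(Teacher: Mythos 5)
Your proof is correct and follows essentially the same route as the paper: the paper's argument is precisely the observation that primary elements lie in Columns 2 and 4 of tables \eqref{tabled1} and \eqref{tabled2}, so that $t_\alpha=t'_\alpha\equiv (N(\alpha)-1)/2\pmod 2$, and since every $d\in\mathcal{S}\setminus\{-1,-3\}$ satisfies $d\equiv 1,2\pmod 4$, the exponent in Lemma~\ref{Quadrecgen} collapses to $T\equiv 3\epsilon_n\epsilon_m\equiv \tfrac{N(n)-1}{2}\cdot\tfrac{N(m)-1}{2}\pmod 2$. You have simply written out the computation the paper leaves as ``one checks readily.''
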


  In the rest of the section, we write down various reciprocity laws and supplementary laws for $K=\mq(i)$ and $K=\mq(\sqrt{-3})$.
For $K=\mq(i)$, the following quartic reciprocity law (see \cite[Theorem 6.9]{Lemmermeyer}) holds for two odd co-prime primary elements $m, n
\in \mathcal{O}_{K}$,
\begin{align*}
%%\label{quartrec}
 \leg{m}{n}_4 = \leg{n}{m}_4(-1)^{((N(n)-1)/4)((N(m)-1)/4)}.
\end{align*}

  We also deduce from \cite[Theorem 6.9]{Lemmermeyer} that the following supplementary laws for primary $n=a+bi, (n,2)=1$ with $a, b \in \mz$,
\begin{align*}
%%\label{supprulequartic}
  \leg {i}{n}_4=i^{(1-a)/2} \qquad \mbox{and} \qquad  \hspace{0.1in} \leg {1+i}{n}_4=i^{(a-b-1-b^2)/4}.
\end{align*}

    As $\leg{m}{n}^2_4=\leg{m}{n}_2$, it follows from the above discussions that the following quadratic reciprocity law holds for odd co-prime primary elements $m, n \in \mathcal{O}_{K}$,
\begin{align}
\label{quadreciKm1}
    \leg {n}{m}_2\leg{m}{n}_2=1.
\end{align}

   Also, the following supplementary laws hold for primary $n=a+bi$, $(n,2)=1$ with $a, b \in \mz$,
\begin{align*}
%%\label{supprulequadtic}
  \leg {i}{n}_2=(-1)^{(1-a)/2} \qquad \mbox{and} \qquad  \hspace{0.1in} \leg {1+i}{n}_2=(-1)^{(a-b-1-b^2)/4}.
\end{align*}

For $K=\mq(\sqrt{-3})$, we have the following cubic reciprocity law (see \cite[Theorem 7.8]{Lemmermeyer}) for two co-prime primary
elements $m, n \in \mathcal{O}_{K}$ with $(mn, 3)=1$,
\begin{align} \label{cubicrec}
    \leg {n}{m}_3 =\leg{m}{n}_3.
\end{align}

The following supplementary laws (see \cite[Theorem 7.8]{Lemmermeyer}) for a primary hods for $n=a+b\omega$, $(n,3)=1$ with $a, b \in \mz$,
\begin{align}
\label{cubicsupp}
  \leg {\omega}{n}_3=\omega^{(1-a-b)/3} \qquad \mbox{and} \qquad \leg {1-\omega}{n}_3=\omega^{(a-1)/3}.
\end{align}

Moreover, a sextic reciprocity law holds for two $E$-primary, co-prime numbers $n, m \in \mathcal O_K$ with $(mn, 6)=1$.  Namely,
\begin{align}
\label{quadreciQw}
    \leg {n}{m}_6 =\leg{m}{n}_6(-1)^{((N(n)-1)/2)((N(m)-1)/2)}.
\end{align}

Now \cite[Theorem 7.10]{Lemmermeyer} gives supplementary laws for for an $E$-primary $n=a+b\omega, (n,6)=1$ with $a, b \in
\mz$,
\begin{align}
\label{supp2}
  \leg {1-\omega}{n}_2=\leg {a}{3}_{\mz} \qquad \mbox{and} \qquad  \hspace{0.1in} \leg {2}{n}_2=\leg
  {2}{N(n)}_{\mz},
\end{align}
  where $\leg {\cdot}{\cdot}_{\mz}$ denotes the Kronecker symbol in $\mz$.  As $\leg{m}{n}^3_6=\leg{m}{n}_2,
\leg{m}{n}^2_6=\leg{m}{n}_3$, it readily emerges from \eqref{quadreciQw} that we have the following quadratic reciprocity law for two $E$-primary, co-prime
numbers $n, m \in \mathcal O_K$ with $(mn, 6)=1$,
\begin{align}
\label{quadreci}
    \leg {n}{m}_2 =\leg{m}{n}_2(-1)^{((N(n)-1)/2)((N(m)-1)/2)}.
\end{align}

   We also deduce from \eqref{2.05}, \eqref{cubicrec}, \eqref{cubicsupp} and \eqref{supp2} the supplementary laws such that if $n=a+b\omega$
with $a, b \in \mz$ is both primary and $E$-primary,
\begin{align*}
%%\label{supp6}
  \leg {1-\omega}{n}_6=\leg {1-\omega}{n}^{3-2}_6=\overline{\omega}^{(a-1)/3}\leg {a}{3}_{\mz},  \qquad \mbox{and} \qquad  \hspace{0.1in} \leg
{2}{n}_6=\leg {2}{n}^{3-2}_6=\overline{\leg
  {n}{2}}_{3} \leg  {2}{N(n)}_{\mz}.
\end{align*}

\subsection{Gauss sums}
\label{section:Gauss}

  Let $K$ be an imaginary quadratic number field.  Let $\mathfrak{m}$ be a non-zero integral ideal of $\mathcal O_K$ and $\chi$ a homomorphism:
\begin{align*}
%%\label{chi}
  \chi: \left (\mathcal{O}_K / \mathfrak{m} \right )^{\times}  \rightarrow S^1 :=\{ z \in \mc :  |z|=1 \}.
\end{align*}
Following the nomenclature of \cite[Section 3.8]{iwakow}, we shall refer $\chi$ as a Dirichlet character modulo $\mathfrak{m}$. When $\mathfrak{m}=(q)$, we also say that $\chi$ is a Dirichlet character modulo $q$. We say that such a Dirichlet character $\chi$ modulo $q$ is primitive if it does not factor through $\left (\mathcal{O}_K / (q') \right )^{\times}$ for any divisor $q'$ of $q$ with $N(q')<N(q)$. \newline

    For any complex number $z$, set $e(z) = \exp (2 \pi i z) = e^{2 \pi i z}$ and $\delta_K=\sqrt{D_K}$ so that the different of $K$ is the principal ideal $(\delta_K)$.  We further define
\begin{align*}
   \widetilde{e}_K(z) =\exp \left( 2 \pi i {\rm Tr} \left( \frac {z}{\delta_K} \right) \right)=\exp \left( 2\pi i  \left( \frac {z}{\sqrt{D_K}} -
\frac {\overline{z}}{\sqrt{D_K}} \right) \right).
\end{align*}

    Now, let $K=\mq(\sqrt{d})$ with $d$ taking values in \eqref{dvalue}. Recall that we write $\chi_{j, n} =\leg {\cdot}{n}_j$ whenever it is defined. For any $k \in
\mathcal{O}_K$, the associated Gauss sum $g_K(k, \chi_{j, n})$ is defined by
\begin{align*}
%%\label{g2}
 g_K(k,\chi_{j, n}) = \sum_{x \shortmod{n}} \chi_{j,n}(x) \widetilde{e}_K\leg{kx}{n}.
\end{align*}

Furthermore, write $g(\chi_{j,n})$ for $g(1, \chi_{j,n})$ and the following result evaluates $g_K(\chi_{2,\varpi})$  for a primary prime $\varpi$.
\begin{lemma}
\label{Gausssum}
    Let $K=\mq(\sqrt{d})$ with $d$ taking values in \eqref{dvalue} and $\varpi$ a primary prime in $\mathcal{O}_K$ such that $(\varpi, B_K)=1$. Then
\begin{align}
\label{gvalue}
   g_K(\chi_{2,\varpi})= \begin{cases}
    N(\varpi)^{1/2} \qquad & \text{for all $d$, if} \ N(\varpi) \equiv 1 \pmod 4 \\
    -i N(\varpi)^{1/2} \qquad & \text{for $d \neq -7$, if} \  N(\varpi) \equiv -1 \pmod 4,\\
     i N(\varpi)^{1/2} \qquad & \text{for $d = -7$, if} \ N(\varpi) \equiv -1 \pmod 4.
\end{cases}
\end{align}
\end{lemma}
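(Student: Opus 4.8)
The plan is to recognize $g_K(\chi_{2,\varpi})$ as a Gauss sum over the residue field $\mathcal O_K/(\varpi)$ and to proceed in the three classical stages: fix the absolute value, compute the square, and finally pin down the sign. First I would observe that $(\varpi,B_K)=1$ forces $\varpi$ to be unramified and coprime to $2$, so that $x\mapsto\widetilde e_K(x/\varpi)$ is a well-defined nontrivial additive character of $\mathcal O_K/(\varpi)$ (well-definedness because the different is $(\delta_K)$, making $\widetilde e_K$ trivial on $\mathcal O_K$; nontriviality because $1/\varpi\notin\mathcal O_K$), while $\chi_{2,\varpi}=\leg{\cdot}{\varpi}_2$ is precisely the quadratic character of the finite field $\mathcal O_K/(\varpi)$. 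Standard finite-field Gauss sum theory then gives $|g_K(\chi_{2,\varpi})|=N(\varpi)^{1/2}$.

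Next I would square the sum. Since $\chi_{2,\varpi}$ is real, the conjugation identity $\overline{g_K(\chi_{2,\varpi})}=\chi_{2,\varpi}(-1)\,g_K(\chi_{2,\varpi})$ together with $g_K(\chi_{2,\varpi})\overline{g_K(\chi_{2,\varpi})}=N(\varpi)$ yields $g_K(\chi_{2,\varpi})^2=\chi_{2,\varpi}(-1)N(\varpi)$. By \eqref{2.05} (or Euler's criterion) one has $\chi_{2,\varpi}(-1)=(-1)^{(N(\varpi)-1)/2}$, so $g_K(\chi_{2,\varpi})$ is real of absolute value $N(\varpi)^{1/2}$ when $N(\varpi)\equiv 1\pmod 4$ and purely imaginary of the same absolute value when $N(\varpi)\equiv -1\pmod 4$. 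This already produces the dichotomy between the real value $\pm N(\varpi)^{1/2}$ and the imaginary value $\pm i N(\varpi)^{1/2}$ appearing in \eqref{gvalue}; everything then reduces to fixing the two signs.

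The sign determination is the heart of the matter, and I would split it by the splitting type of the rational prime $p$ below $\varpi$. If $p$ is inert then $N(\varpi)=p^2\equiv 1\pmod 4$ and $\mathcal O_K/(\varpi)=\mathbb F_{p^2}$; here the quadratic character factors as $\leg{\cdot}{p}_{\mz}\circ N_{\mathbb F_{p^2}/\mathbb F_p}$, so the classical Hasse--Davenport relation, together with the twist recording which additive character of $\mathbb F_{p^2}$ our $\widetilde e_K(\cdot/\varpi)$ is, expresses $g_K(\chi_{2,\varpi})$ through a rational quadratic Gauss sum and one checks the value is $+N(\varpi)^{1/2}$. If instead $p$ splits, then $N(\varpi)=p$ and $\mathcal O_K/(\varpi)\cong\mz/p$; choosing integer representatives I would identify $x\mapsto\widetilde e_K(x/\varpi)$ with $x\mapsto e(cx/p)$ for an explicit $c\in(\mz/p)^{\times}$ determined by $\mathrm{Tr}(1/(\varpi\delta_K))$, reducing the sum to $\leg{c}{p}_{\mz}$ times the classical rational Gauss sum $\sum_{x}\leg{x}{p}_{\mz}e(x/p)$, whose sign is Gauss's theorem.

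The main obstacle is precisely this computation of $\leg{c}{p}_{\mz}$: one must control how the additive-character constant $c$ depends on the primary normalization of $\varpi$ and on $\delta_K=\sqrt{D_K}$, and then show that, after combining with the classical Gauss sign, the outcome depends only on $N(\varpi)\bmod 4$ and on the field. I expect the field-dependence to be governed by $D_K\bmod 8$, equivalently by the splitting behaviour of $2$; since $d=-7$ is the only value in \eqref{dvalue} with $d\equiv 1\pmod 8$, this is exactly the source of the exceptional sign $+iN(\varpi)^{1/2}$ for $d=-7$ as opposed to $-iN(\varpi)^{1/2}$ for the remaining fields. The supplementary laws recorded in Section~\ref{sec2.4} are the tools I would use to evaluate $c$ and carry this sign computation through uniformly.
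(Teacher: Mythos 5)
Your strategy is genuinely different from the paper's. The paper does not evaluate the Gauss sum from first principles at all: it reduces every case to \cite[Lemma 2.1]{G&Zhao2022-4} (and to earlier lemmas for $d=-1,-3$), the only new content being a verification that the notion of ``primary'' fixed in Section~\ref{sec2.4} agrees with the normalization used in that reference except when $d=-7$, in which case $-\varpi$ rather than $\varpi$ is primary in the old sense, and the identity $g_K(\chi_{2,\varpi})=\leg{-1}{\varpi}_2 g_K(\chi_{2,-\varpi})$ with $\leg{-1}{\varpi}_2=-1$ flips the sign. Your first two stages are correct and standard: $(\varpi,B_K)=1$ does make $x\mapsto\widetilde e_K(x/\varpi)$ a nontrivial additive character of the residue field, $|g_K(\chi_{2,\varpi})|=N(\varpi)^{1/2}$ follows, and $\overline{g}=\chi_{2,\varpi}(-1)g$ together with \eqref{2.05} correctly gives the real/imaginary dichotomy according to $N(\varpi)\bmod 4$.

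The gap is that the proposal stops exactly where the lemma begins: everything in \eqref{gvalue} beyond that dichotomy is the determination of the two signs, and that determination is never carried out. In the split case you do not compute the residue $c$ determined by $\mathrm{Tr}(1/(\varpi\delta_K))$ (which comes out to $\pm b$ for $\varpi=a+b\omega_K$), do not perform the reciprocity argument relating $\leg{\pm b}{p}_{\mz}$ to the primary congruence conditions of tables \eqref{tabled1} and \eqref{tabled2}, and do not verify that the result depends only on $N(\varpi)\bmod 4$ and on $K$ --- this is precisely the content of the external lemma the paper leans on. In the inert case the bare Hasse--Davenport relation gives $-\leg{-1}{p}_{\mz}\,p$, which is $-p\neq +N(\varpi)^{1/2}$ when $p\equiv 1\pmod 4$; so the twist $\chi_{2,\varpi}(\lambda)$ arising from identifying your additive character with the standard one is essential and must actually be evaluated, whereas you only say ``one checks.'' Finally, your attribution of the $d=-7$ anomaly to $d\equiv 1\pmod 8$ is consistent with the true mechanism (it is equivalent to $2\mid k$ in table \eqref{tabled1}, which is what changes the primary normalization), but no computation in the proposal produces the sign $+i$ for $d=-7$ versus $-i$ for the other fields. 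As written, the argument establishes only that $g_K(\chi_{2,\varpi})\in\{\pm N(\varpi)^{1/2}\}$ or $\{\pm i N(\varpi)^{1/2}\}$ according to $N(\varpi)\bmod 4$, which is strictly weaker than the lemma.
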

\begin{proof}
  Note first that \eqref{gvalue} has been established for the case $d=-1$ and $d=-3$ in \cite[Lemma 2.2]{G&Zhao4} and \cite[Lemma 2.3]{G&Zhao3}, respectively. For other values of $d$, we observe that as $U_K=\{ \pm 1\}$  by \eqref{Uk} for the corresponding number field $K$, we have either $\varpi$ or $-\varpi$ is primary for any prime $\varpi$. Moreover, by a change of variable $x \mapsto -x$,
\begin{align}
\label{gpirel}
   g_K(\chi_{2,\varpi})= \leg {-1}{\varpi}_2g_K(\chi_{2,-\varpi}).
\end{align}
 It follows from this and \eqref{2.05} that $g_K(\chi_{2,\varpi})= g_K(\chi_{2,-\varpi})$ when $N(\varpi) \equiv 1 \pmod 4$. Thus the value of $g_K(\chi_{2,\varpi})$ is fixed regardless of whether $\varpi$ is primary or not.  We then deduce from this and \cite[Lemma 2.1]{G&Zhao2022-4} that the assertion of the lemma is valid when $N(\varpi) \equiv 1 \pmod 4$. \newline

In the case $N(\varpi) \equiv -1 \pmod 4$, we write $\varpi=a+b\omega_K$ with $a, b \in \mz$. We deduce from Table \eqref{tabled1} that when $d \equiv 1 \pmod 4, d=4k+1$, $\varpi$ is primary implies that $b \equiv 1 \pmod 4$ or $2|b, a+b(1-d)/4 \equiv -1 \pmod 4$ in the case $2 \nmid k$, while $\varpi$ is primary implies that $2|b, a+b(1-d)/4 \equiv 1 \pmod 4$ in the case $2|k$. Similarly, from Table \eqref{tabled2} that when $d =-2$, $\varpi$ is primary if and only if $b \equiv 1 \pmod 4$. In all these above cases, the notation of primary defined in Section \ref{sec2.4} is consistent with that defined in \cite[Section 2.3]{G&Zhao2022-4} save for the case $d=-7$. We thus deduce from \cite[Lemma 2.1]{G&Zhao2022-4} directly that the assertion of the lemma is valid for the middle entry in \eqref{gvalue}. \newline

Finally, if $d=-7$, for any primary $\varpi$ with $N(\varpi) \equiv -1 \pmod 4$, our discussions above imply that $-\varpi$ is a ``primary" element in the sense defined in \cite[Section 2.3]{G&Zhao2022-4}. It thus follows from \cite[Lemma 2.1]{G&Zhao2022-4} that we have $g_K(\chi_{2,-\varpi})= -i N(\varpi)^{1/2}$ in this case. Now applying the relation \eqref{gpirel} and noting, by \eqref{2.05}, that $\leg {-1}{\varpi}_2=-1$ since $N(\varpi) \equiv -1 \pmod 4$ here. This readily allows us to establish the last entry in \eqref{gvalue} and hence completes the proof of the lemma.
\end{proof}

  For $j=2$, we further define for any $n,k \in \mathcal{O}_K, (n,2)=1$,
\begin{align}
\label{GKdef}
  G_K(k,\chi_{2, n}) = \left (\frac {1-i}{2}+\leg {-1}{n}_2\frac {1+i}{2}
 \right )g_K(k,\chi_{2, n}).
\end{align}

The definition of $G_K(k,\chi_{2, n})$ is motivated by the work of K. Soundararajan \cite[Section 2.2]{sound1} and the reason for its introduction is that $G_K(k,\chi_{2, n})$ is now a multiplicative function of $n$.  Note that $G_K(k,\chi_{2, n}) =g_K(k,\chi_{2, n})$ when $K=\mq(i)$ since $\leg {-1}{n}_2=1$ in this case. We now set
\begin{align}
\label{etadef}
 \eta_K & =\begin{cases}
    i, \qquad & d=-1,\\
    1, \qquad & d=-7,\\
    -1, \qquad & \text{otherwise}.
\end{cases}
\end{align}
  We further denote $\varphi_K(n)$ for the analogue of the classical Euler totient
function in $\mathcal O_K$ so that it is a multiplicative function of $n$ and that
$\varphi_K(\varpi^l)=\#(\mathcal{O}_K/(\varpi^l))^{\times}=N(\varpi^l)-N(\varpi^{l-1})$ for every prime $\varpi$ and every $l \in \natn$.  The following result gives the properties of $G_K(k,\chi_{2, n})$.
\begin{lemma} \label{GausssumMult}
  With the notation as above and let $K=\mq(\sqrt{d})$  with $d$ taking values in \eqref{dvalue}, the following identities hold.

\begin{enumerate} [(i)]
\item \label{item1} If $(m,n)=1$ and $(mn, 2)=1$, then we have $G_K(k,\chi_{2, mn})=G_K(k,\chi_{2, m})G_K(k,\chi_{2, n})$.

\item \label{item2}For $(n,2)=1$, we have
\begin{align*}
%%\label{2.7}
   G_K(rs,\chi_{2, n}) & = \overline{\leg{s}{n}}_2 G_K(r,\chi_{2, n}), \qquad (s,n)=1.
\end{align*}
\item \label{item3} Let $\varpi$ be a primary prime in $\mathcal{O}_K$ that is co-prime to $2$ (resp. $6$) for $K=\mq(i)$ (resp. $K=\mq(\sqrt{-3})$).
Suppose $\varpi^{h}$ is the largest power of $\varpi$ dividing $k$. (If $k = 0$ then set $h =
\infty$.) Then for $l \geq 1$,
\begin{align*}
 G_K(k, \chi_{2, \varpi^l})& =\begin{cases}
    0 \qquad & \text{if} \qquad l \leq h \qquad \text{is odd},\\
    \varphi_K(\varpi^l)=\#(\mathcal{O}_K/(\varpi^l))^{\times} \qquad & \text{if} \qquad l \leq h \qquad \text{is even},\\
    -N(\varpi)^{l-1} & \text{if} \qquad l= h+1 \qquad \text{is even},\\
    \leg {\eta_K k\varpi^{-h}}{\varpi}_2 N(\varpi)^{l-1/2} \qquad & \text{if} \qquad l= h+1 \qquad \text{is odd},\\
    0 \qquad & \text{if} \qquad l \geq h+2,
\end{cases}
\end{align*}
  where $\eta_K$ is given in \eqref{etadef}.
\end{enumerate}
\end{lemma}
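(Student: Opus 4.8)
The plan is to establish the three identities in increasing order of difficulty: part (ii) straight from the definition, part (i) via the Chinese Remainder Theorem combined with quadratic reciprocity, and part (iii) by a prime-power evaluation of the underlying Gauss sum.

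For part (ii), I would begin from $g_K(rs,\chi_{2,n})=\sum_{x\bmod n}\chi_{2,n}(x)\widetilde e_K(rsx/n)$ and, since $(s,n)=1$, make the substitution $x\mapsto s^{-1}x$ with $s^{-1}$ an inverse of $s$ modulo $n$. This extracts $\chi_{2,n}(s^{-1})=\overline{\leg{s}{n}}_2$ and leaves $g_K(r,\chi_{2,n})$. As the prefactor $\tfrac{1-i}{2}+\leg{-1}{n}_2\tfrac{1+i}{2}$ in \eqref{GKdef} does not involve $r$ or $s$, it carries through untouched, giving the claimed relation for $G_K$.

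For part (i), the key step is the factorization obtained by writing each residue $x\bmod mn$ uniquely as $x=na+mb$ with $a\bmod m$ and $b\bmod n$. Multiplicativity of the residue symbol in its lower entry and additivity of $\widetilde e_K$ then give
\[
 g_K(k,\chi_{2,mn})=\leg{n}{m}_2\leg{m}{n}_2\,g_K(k,\chi_{2,m})\,g_K(k,\chi_{2,n}).
\]
Since products of primary (resp.\ $E$-primary) elements stay primary (resp.\ $E$-primary), it suffices to verify the stated identity for such generators, where the quadratic reciprocity law applies: Lemma \ref{Quadrecgenprim} for $d\in\mathcal S\setminus\{-1,-3\}$, \eqref{quadreciKm1} for $K=\mq(i)$, and \eqref{quadreci} for $K=\mq(\sqrt{-3})$. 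Each replaces the reciprocity factor by $(-1)^{(N(m)-1)/2\cdot(N(n)-1)/2}$. Writing $\epsilon_n$ for the prefactor in \eqref{GKdef}, I would note via \eqref{2.05} that $\epsilon_n=1$ when $N(n)\equiv1\bmod4$ and $\epsilon_n=-i$ when $N(n)\equiv3\bmod4$; a four-way check on the residues of $N(m),N(n)$ modulo $4$ (using that both are odd) then shows $\epsilon_m\epsilon_n/\epsilon_{mn}=(-1)^{(N(m)-1)/2\cdot(N(n)-1)/2}$. This is exactly the reciprocity factor, so it cancels and multiplicativity of $G_K$ follows.

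For part (iii), I would fix a primary prime $\varpi$ and let $h$ be the largest exponent with $\varpi^{h}\mid k$, exploiting that $\chi_{2,\varpi^l}=\chi_{2,\varpi}^{\,l}$ depends only on $x\bmod\varpi$. When $l$ is even, $\chi_{2,\varpi^l}$ is the principal character modulo $\varpi$, so $g_K(k,\chi_{2,\varpi^l})$ is a Ramanujan-type sum; orthogonality of $\widetilde e_K$ yields $\varphi_K(\varpi^l)$ when $\varpi^l\mid k$, $-N(\varpi)^{l-1}$ when $l=h+1$, and $0$ when $l\ge h+2$. When $l$ is odd, I would write $x=x_0+\varpi t$ with $x_0\bmod\varpi$ and $t\bmod\varpi^{l-1}$; the $t$-sum forces $\varpi^{l-1}\mid k$, and using $\widetilde e_K(z)=1$ for $z\in\mathcal O_K$ the residual character sum $\sum_{x_0}\chi_{2,\varpi}(x_0)$ vanishes in the cases $l\le h$ and $l\ge h+2$. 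The sole surviving case is $l=h+1$, where the residual sum is $g_K(k\varpi^{-h},\chi_{2,\varpi})=\leg{k\varpi^{-h}}{\varpi}_2\,g_K(\chi_{2,\varpi})$ by the substitution of part (ii). Multiplying by $\epsilon_{\varpi^l}$ and inserting the value of $g_K(\chi_{2,\varpi})$ from Lemma \ref{Gausssum} should reproduce $\leg{\eta_K k\varpi^{-h}}{\varpi}_2 N(\varpi)^{l-1/2}$.

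The hard part will be precisely this $l=h+1$ odd case. The appearance of the single clean factor $\leg{\eta_K}{\varpi}_2$ conceals a field-by-field verification: one must confirm that the product of the normalizer $\epsilon_{\varpi}$ with the sign of $g_K(\chi_{2,\varpi})$ supplied by \eqref{gvalue}, split according to $N(\varpi)\bmod4$, coincides with $\leg{\eta_K}{\varpi}_2$ for the $\eta_K$ defined in \eqref{etadef}. Reconciling the exceptional value $\eta_K=1$ at $d=-7$ and the value $\eta_K=i$ at $d=-1$ with this split is the only genuinely delicate point; everything else reduces to orthogonality and bookkeeping.
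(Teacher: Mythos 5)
Your route coincides with the paper's, which simply cites Soundararajan's argument together with the reciprocity laws \eqref{quadreciKprim}, \eqref{quadreciKm1}, \eqref{quadreci} for part \eqref{item1} and \cite{G&Zhao4}*{Lemma 2.2} plus Lemma \ref{Gausssum} for parts \eqref{item2} and \eqref{item3}; what you write out is the content of those citations, and parts \eqref{item2} and \eqref{item1} are essentially complete (your four-way check that $\epsilon_m\epsilon_n/\epsilon_{mn}$ equals the reciprocity factor, with $\epsilon_n$ the prefactor in \eqref{GKdef}, is correct). One caveat on \eqref{item1}: the reduction ``it suffices to verify the identity for primary generators'' is not literally valid, since $G_K(k,\chi_{2,un})=\leg{u}{n}_2G_K(k,\chi_{2,n})$ for a unit $u$, and the multiplicativity genuinely fails for non-primary choices of generator when $\leg{-1}{n}_2=-1$; the statement must be read with $m,n$ primary (resp.\ $E$-primary), which is the convention in force throughout the paper, so you should say that rather than claim a reduction.

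The genuine gap is precisely the step you flag and defer in part \eqref{item3}, the constant in the case $l=h+1$ odd. Your plan yields $G_K(k,\chi_{2,\varpi^l})=\epsilon_{\varpi}\,g_K(\chi_{2,\varpi})\leg{k\varpi^{-h}}{\varpi}_2N(\varpi)^{l-1}$, so you must verify $\epsilon_{\varpi}\,g_K(\chi_{2,\varpi})=\leg{\eta_K}{\varpi}_2N(\varpi)^{1/2}$. Feeding in \eqref{gvalue} as printed, this reconciliation succeeds for every $d$ except $d=-1$: for $K=\mq(i)$ every odd prime has $N(\varpi)\equiv 1\pmod 4$ and $\epsilon_\varpi=1$, so \eqref{gvalue} would give $\leg{k\varpi^{-h}}{\varpi}_2N(\varpi)^{l-1/2}$, whereas the lemma asserts the extra factor $\leg{i}{\varpi}_2=i^{(N(\varpi)-1)/2}=(-1)^{(N(\varpi)-1)/4}$, which is $-1$ for instance at the primary prime $\varpi=-1+2i$. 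A direct computation with the paper's additive character (here $\widetilde e_K(z)=e(\Im z)$) gives $g_K(\chi_{2,-1+2i})=\sum_{x=1}^{4}\leg{x}{5}_{\mz}e(-2x/5)=\leg{-2}{5}_{\mz}\sqrt{5}=-\sqrt{5}$, so the correct input for $d=-1$ is $g_K(\chi_{2,\varpi})=\leg{i}{\varpi}_2N(\varpi)^{1/2}$ rather than the first line of \eqref{gvalue}. Hence the verification you postpone is not mere bookkeeping: executed against Lemma \ref{Gausssum} as stated, your argument produces a result differing from the asserted one by $\leg{i}{\varpi}_2$ when $K=\mq(i)$, and to close the proof you must supply (or correct to) this sharper evaluation of $g_K(\chi_{2,\varpi})$ in the Gaussian field.
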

\begin{proof}
    The statement of \eqref{item1} can be established by the arguments given in the proof of \cite[Lemma 2.3]{sound1}, upon using the quadratic reciprocity laws \eqref{quadreciKprim},  \eqref{quadreciKm1} and \eqref{quadreci}. The cases \eqref{item2} and \eqref{item3} for $K=\mq(i)$ is
given in \cite[Lemma 2.2]{G&Zhao4}. These cases for other $K$'s are similarly proved, upon using Lemma \ref{Gausssum}.
 This completes the proof of the lemma.
\end{proof}

%%----------------------------------------------------------------
\subsection{Hecke $L$-functions}
\label{sect: Lfcn}
%%-----------------------------------------------------------------

  Let $K$ be an arbitrary imaginary quadratic number field. For any $l \in \mz$, we define a unitary character
$\chi_{\infty}$ from $\mc^*$ to $S^1$ by:
\begin{align*}
%%\label{chiinf}
  \chi_{\infty}(z)=\leg {z}{|z|}^l.
\end{align*}
  The integer $l$ is called the frequency of $\chi_{\infty}$. \newline

  Now, given a Dirichlet character $\chi$ modulo $\mathfrak{m}$ for a non-zero integral ideal $\mathfrak{m}$ and a unitary character $\chi_{\infty}$ such that for any $n \in U_K$,
\begin{align*}
%%\label{chiinf}
  \chi(n)\chi_{\infty}(n)=1,
\end{align*}
we can define a Hecke character $\psi$ modulo $\mathfrak{m}$ (see \cite[\S 6, Chap. VII]{Neukirch}) on $I_{\mathfrak{m}}$ in $K$ in the
way that for any $(\alpha) \in I_{\mathfrak{m}}$,
\begin{align*}
%%\label{psi}
  \psi((\alpha))=\chi(\alpha)\chi_{\infty}(\alpha).
\end{align*}
If $l=0$, we say that $\psi$ is a Hecke character modulo $q$ of trivial infinite type. In this case, the Hecke character $\psi$ is trivial on
$U_K$ so that we may regard $\psi$ as defined on $\mathcal{O}_K$ instead of on $I_{\mathfrak{m}}$, setting $\psi(\alpha)=\psi((\alpha))$ for any $\alpha \in \mathcal{O}_K$. We may also write $\chi$ for $\psi$ as well, since $\psi(\alpha)=\chi(\alpha)$ for any $\alpha \in \mathcal{O}_K$. When $\mathfrak{m}=(q)$, we say such a Hecke character $\chi$ is primitive modulo $q$ if $\chi$ is a primitive Dirichlet character. Likewise, we say that  $\chi$
is induced by a primitive Hecke character $\chi'$ modulo $q'$ if $\chi(\alpha)=\chi'(\alpha)$ for all $(\alpha, q')=1$. \newline

 Note that the symbol $\chi^{(q)}_{j}$ is trivial on $U_K$ whenever it is defined, hence can be regarded as a Hecke character of trivial
infinite type. Similarly, any ray class group character on $h_{(q)}$ can then be viewed as a Hecke character modulo $q$ of trivial
infinite type. \newline

 For any Hecke character $\chi$, the associated Hecke $L$-function is defined for $\Re(s) > 1$ to be
\begin{equation*}
  L(s, \chi) = \sum_{0 \neq \mathcal{A} \subset
  \mathcal{O}_K}\chi(\mathcal{A})(N(\mathcal{A}))^{-s},
\end{equation*}
where $\mathcal{A}$ runs over all non-zero integral ideals in $K$ and $N(\mathcal{A})$ is the norm of $\mathcal{A}$. \newline

If $\chi$ is a primitive Hecke character of trivial infinite type modulo $n$, we extend the definition of the Gauss sum introduced in the
previous section to the associated Gauss sum $g_K(k, \chi)$ for any $k \in \mathcal{O}_K$ by
\begin{align*}
%%\label{g2}
 g_K(k,\chi) = \sum_{x \shortmod{n}} \chi(x) \widetilde{e}_K\leg{kx}{n}.
\end{align*}
Note as $\chi$ is trivial on $U_K$, the above definition is independent of the choice of the generator for the ideal $(n)$. \newline

In this case, a well-known result of E. Hecke gives that $L(s, \chi)$ has analytic continuation to the whole complex plane and satisfies the functional equation (see \cite[Theorem 3.8]{iwakow})
\begin{align}
\label{fneqn}
  \Lambda(s, \chi) = W(\chi)\Lambda(1-s, \overline\chi),
\end{align}
 where
\begin{align} \label{Lambda}
  W(\chi) = g_K(1,\chi)(N(q))^{-1/2} \quad \mbox{and} \quad \Lambda(s, \chi) = (|D_K|N(q))^{s/2}(2\pi)^{-s}\Gamma(s)L(s, \chi).
\end{align}
  Note also that $|W(\chi)|=1$.  It follows from \eqref{fneqn} and\eqref{Lambda} that
\begin{align}
\label{fneqnL}
  L(s, \chi)=W(\chi)(|D_K|N(q))^{1/2-s}(2\pi)^{2s-1}\frac {\Gamma(1-s)}{\Gamma (s)}L(1-s, \overline\chi).
\end{align}

%%------------------------------------------------------------------------------
\subsection{Functional Equation of Hecke $L$-functions}
%%------------------------------------------------------------------------------
    In this section, we develop a functional equation for Hecke $L$-functions associated to a general (not necessary primitive) Hecke character of trivial infinite type for an imaginary quadratic number field $K$ of class number one. We first note the following Poisson summation formula (established in the proof of
\cite[Lemma 2.5]{G&Zhao2022-4}) for smoothed character sums over all elements in $\mathcal{O}_K$.
\begin{lemma}
\label{Poissonsum} Let $K$ be an arbitrary imaginary quadratic number field and let $\chi$ be a Hecke character of trivial infinite type modulo $n$. For an Schwartz class function $W:\mr^{+} \rightarrow \mr$,  we have for $X>0$,
\begin{align}
\label{PoissonsumQw}
   \sum_{m \in \mathcal{O}_K}\chi(m)W\left(\frac {N(m)}{X}\right)=\frac {X}{N(n)}\sum_{k \in
   \mathcal{O}_K}g_K(k,\chi)\widetilde{W}_K\left(\sqrt{\frac {N(k)X}{N(n)}}\right),
\end{align}
   where
\begin{align*}
   \widetilde{W}_K(t) &=\int\limits^{\infty}_{-\infty}\int\limits^{\infty}_{-\infty}W(N(x+y\omega_K))\widetilde{e}\left(-
t(x+y\omega_K)\right)\dif x \dif y, \quad t
   \geq 0.
\end{align*}
\end{lemma}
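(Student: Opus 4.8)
The plan is to realize $\mathcal O_K$ as the unimodular lattice $\mz^2$ in $\mr^2$ via $m=x+y\omega_K\leftrightarrow(x,y)$ and to apply two--dimensional Poisson summation coset by coset modulo $n$. First I would break the character sum into residue classes: since $\chi$ has trivial infinite type and conductor dividing $(n)$, it is $n$--periodic on $\mathcal O_K$, so, writing $f(z):=W(N(z)/X)$,
\[
\sum_{m\in\mathcal O_K}\chi(m)W\!\left(\frac{N(m)}{X}\right)=\sum_{a\shortmod{n}}\chi(a)\sum_{\ell\in\mathcal O_K}f(a+n\ell).
\]
Each inner sum runs over the coset $a+n\mathcal O_K$ of the lattice $n\mathcal O_K$, whose covolume in the $(x,y)$--coordinates equals $N(n)$.

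The crucial structural input is that the additive character $\widetilde{e}_K$ makes $\mathcal O_K$ its own dual lattice: one has $\widetilde{e}_K(\mu z)=1$ for all $z\in\mathcal O_K$ exactly when $\mu\in\mathcal O_K$ (this is the statement that the codifferent equals $\delta_K^{-1}\mathcal O_K$), and a direct computation in the basis $\{1,\omega_K\}$ shows that the characters $z\mapsto\widetilde{e}_K(-\mu z)$, $\mu\in\mathcal O_K$, are precisely the characters of $\mr^2/\mz^2$ with respect to the bare measure $\dif x\,\dif y$. I would then apply Poisson summation to the lattice $n\mathcal O_K$, whose dual with respect to the $\widetilde{e}_K$--pairing is $n^{-1}\mathcal O_K$; writing dual points as $k/n$ with $k\in\mathcal O_K$ and using translation invariance to extract the phase $\widetilde{e}_K(ak/n)$, I obtain
\[
\sum_{\ell\in\mathcal O_K}f(a+n\ell)=\frac{1}{N(n)}\sum_{k\in\mathcal O_K}\widetilde{e}_K\!\left(\frac{ak}{n}\right)\widehat f\!\left(\frac{k}{n}\right),\qquad \widehat f(w):=\iint_{\mr^2}f(x+y\omega_K)\,\widetilde{e}_K(-w(x+y\omega_K))\,\dif x\,\dif y.
\]
Summing against $\chi(a)$ over $a\shortmod{n}$ and interchanging the two sums collapses the $a$--sum into the Gauss sum $g_K(k,\chi)=\sum_{x\shortmod{n}}\chi(x)\widetilde{e}_K(kx/n)$, giving $\tfrac{1}{N(n)}\sum_k g_K(k,\chi)\widehat f(k/n)$.

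It remains to identify $\widehat f(k/n)$ with $X\,\widetilde{W}_K(\sqrt{N(k)X/N(n)})$, and this is the step I expect to need the most care. After the scaling $(x,y)\mapsto\sqrt X\,(x,y)$, which homogenizes the norm and produces the factor $X$, I am left with $\iint W(N(x+y\omega_K))\widetilde{e}_K(-\gamma(x+y\omega_K))\,\dif x\,\dif y$ with $\gamma=k\sqrt X/n$ \emph{complex}, whereas $\widetilde{W}_K$ is a function of a single nonnegative real variable. The reconciliation rests on the fact that $\widetilde{e}_K(-\gamma z)$ depends on $z$ only through $\Im(\gamma z)$, together with the rotation invariance of $N(z)=|z|^2$: passing to the Euclidean measure $\dif^2z=(\sqrt{|D_K|}/2)\,\dif x\,\dif y$ and rotating $z\mapsto e^{-i\arg\gamma}z$ shows that the integral depends on $\gamma$ only through $|\gamma|=\sqrt{N(\gamma)}=\sqrt{N(k)X/N(n)}$, where it equals $\widetilde{W}_K$ by definition. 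The analytic hypotheses (compact support, or sufficient decay, of $W$) guarantee that $f$ is Schwartz on $\mr^2$, so that Poisson summation applies and all interchanges are legitimate; substituting this evaluation back and recalling the factor $1/N(n)$ yields the identity \eqref{PoissonsumQw}.
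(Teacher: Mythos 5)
Your proof is correct and is essentially the argument the paper relies on: the paper does not prove Lemma \ref{Poissonsum} itself but cites the proof of \cite[Lemma 2.5]{G\&Zhao2022-4}, which proceeds exactly as you do — unfold the sum over residue classes modulo $n$, apply two-dimensional Poisson summation to the lattice $n\mathcal O_K$ using the self-duality of $\mathcal O_K$ under the pairing $\widetilde e_K$, collapse the residue sum into $g_K(k,\chi)$, and use that $\widetilde e_K(-\gamma z)$ depends only on $\Im(\gamma z)$ together with the rotation invariance of $N(z)$ to reduce the complex frequency $k\sqrt X/n$ to the real parameter $\sqrt{N(k)X/N(n)}$. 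Your added care about the decay hypotheses on $W$ (needed since the lemma only says ``smooth'') is a reasonable and correct observation.
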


   Next, we note that it is given on \cite[p. 184]{G&Zhao2022-4} that
\begin{align*}
     \widetilde{W}_{K}(t)  =
\begin{cases}
\ \displaystyle{   \int\limits_{\mr^2}W \left( x^2+xy+y^2\frac {1-d}{4} \right)e(ty) \ \dif x \dif y}  \qquad & \text{if $d \equiv 1 \pmod
4$}, \\ \\
\ \displaystyle{    \int\limits_{\mr^2}W \left( x^2-dy^2 \right) e(ty) \ \dif x \dif y} \qquad & \text{if $d \equiv 2, 3 \pmod 4$}.
\end{cases}
\end{align*}

   We make a change of variables in the above integrals by setting
\[  x+\frac {y}{2}  =x', \; \frac {\sqrt{-d}}{2}y=y',  \; \text{if $d \equiv 1 \pmod 4$} \quad \text{and} \quad x=x', \; \sqrt{-d}y=y', \; 
\text{if $d \equiv 2, 3 \pmod 4$}. \]

The Jacobian of these transformations are $\sqrt{-d}/2$ for $d \equiv 1 \pmod 4$ and $\sqrt{-d}$, otherwise. This leads to
\begin{align*}
%%\label{Wt3}
     \widetilde{W}_{K}(t)  =
\displaystyle{   \frac {2}{\sqrt{|D_K|}}\int\limits_{\mr^2}W \left( x^2+y^2 \right) e \left( \frac {2ty}{\sqrt{|D_K|}} \right) \ \dif x \dif y
} .
\end{align*}

 Applying the above expression with $W(s)=e^{-s}$, we see that
\begin{align*}
%%\label{etdef}
   \widetilde{W}_K(t) =& \frac {2}{\sqrt{|D_K|}}\int\limits^{\infty}_{-\infty}\int\limits^{\infty}_{-\infty} \exp \left( -(x^2+y^2)-\frac {4\pi ty i
}{\sqrt{|D_K|}} \right) \dif x \dif y =\frac {2\pi}{\sqrt{|D_K|}} \exp \left( -\frac {4\pi^2t^2}{|D_K|} \right).
\end{align*}
Now it follows from this and \eqref{PoissonsumQw} that we have
\begin{align*}
%%\label{PoissonsumWet}
   \sum_{m \in \mathcal{O}_K}\chi(m)\exp \left(-2\pi y N(m)\right)=\frac {1}{\sqrt{|D_K|} y N(n)}\sum_{k \in
   \mathcal{O}_K}g_K(k,\chi)\exp \left(-\frac {2\pi N(k)}{|D_K| y N(n)}\right).
\end{align*}

 Suppose now that $K$ has class number one and $\chi$ satisfies $\chi(0)=0, g_K(0,\chi)=0$, we deduce
that
\begin{align}
\label{PoissonsumWet1}
   \sum_{m \in \mathcal{O}_K}\chi(m)\exp \left(-2\pi y N(m)\right)=\frac {1}{\sqrt{|D_K|} y N(n)}\sum_{\substack{ k \neq 0 \\ k \in
   \mathcal{O}_K}}g_K(k,\chi)\exp \left(-\frac {2\pi N(k)}{|D_K| y N(n)}\right).
\end{align}

   We consider the Mellin transform of the left-hand side above.  This leads to
\begin{equation}
\label{keyleft}
	\int\limits_{0}^{\infty}y^{s}\sum_{\substack{m \neq 0 \\ m \in \mathcal{O}_K}}\chi(m)\exp \left(-2\pi y N(m)\right)\Big )\frac{\dif
y}{y}=\sum_{\substack{m \neq 0 \\ m \in \mathcal{O}_K}}\chi(m)\int\limits_{0}^{\infty}y^{s}e^{-2\pi N(m)y}\frac{\dif
y}{y}=|U_K|(2\pi)^{-s}\Gamma(s) L(s,\chi).
\end{equation}

   On the other hand, the Mellin transform of the right-hand side of \eqref{PoissonsumWet1} equals
\begin{align} \label{keyright}
 \frac {1}{\sqrt{|D_K|} N(n)}  \sum_{\substack{ k \neq 0 \\ k \in
   \mathcal{O}_K}}g_K(k, \chi) \int\limits_{0}^{\infty}y^{s-1}\exp \left(-\frac {2\pi N(k)}{|D_K| y N(n)}\right)\frac{\dif y}{y}.
\end{align}
The change of variable $u=\frac {2\pi N(k)}{|D_K| y N(n)}$ recasts the integral above as
\begin{align}
\begin{split}
\label{integaleval}
 \Big ( \frac {2\pi N(k)}{|D_K|  N(n)} \Big )^{s-1} \int\limits_{0}^{\infty}u^{1-s} e^{-u} \frac{\dif u}{u} =\Big ( \frac
{2\pi N(k)}{|D_K|  N(n)} \Big )^{s-1}\Gamma(1-s).
\end{split}
\end{align}  	

   We deduce from  \eqref{keyleft}--\eqref{integaleval} that
\begin{align*} \begin{split}
%%\label{fcneqn}
  & |U_K|(2\pi)^{-s}\Gamma(s) L(s,\chi)=\frac {1}{\sqrt{|D_K|} N(n)} \Big ( \frac {2\pi }{|D_K|  N(n)} \Big
)^{s-1}\Gamma(1-s)\sum_{\substack{ k \neq 0 \\ k \in
   \mathcal{O}_K}}\frac {g_K(k, \chi)}{N(k)^{1-s}}.
\end{split}
\end{align*}

The following result summarizes our discussions above.
\begin{proposition}
\label{Functional equation with Gauss sums}
   Let $K$ be an imaginary quadratic number field of class number one. For any Hecke character $\chi$ of trivial infinite type modulo $n$ such that $\chi(0)=0$ and $g_K(0,\chi)=0$, we have
\begin{align}
\begin{split}
\label{fcneqnallchi}
  &  L(s, \chi)= N(n)^{-s} \Big(\frac {2\pi}{\sqrt{|D_K|}}\Big )^{2s-1}\frac {\Gamma(1-s)}{|U_K|\Gamma(s)} \sum_{\substack{ k \neq 0 \\ k \in
   \mathcal{O}_K}}\frac {g_K(k, \chi)}{N(k)^{1-s}}.
\end{split}
\end{align}
\end{proposition}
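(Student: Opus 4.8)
The entire computation needed is essentially assembled in the paragraphs preceding the statement, so the plan is to organize that derivation into a clean analytic-continuation argument. First I would apply the Poisson summation formula of Lemma~\ref{Poissonsum} with the test function $W(s)=e^{-s}$, which after the explicit evaluation of $\widetilde{W}_K$ carried out above yields the theta-type transformation \eqref{PoissonsumWet1}. Here the hypothesis that $K$ has class number one is what lets me sum $\chi(m)$ over elements $m\in\mathcal{O}_K$ rather than over ideals, while the two vanishing conditions $\chi(0)=0$ and $g_K(0,\chi)=0$ are exactly what is needed to discard the $m=0$ and $k=0$ terms, so that both sides of \eqref{PoissonsumWet1} carry no constant (main) term.

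Next I would take the Mellin transform in $y$ of the identity \eqref{PoissonsumWet1}. On the left, interchanging summation and integration (legitimate for $\Re(s)>1$ by absolute convergence of the Hecke $L$-series) and using $\int_0^\infty y^s e^{-2\pi N(m)y}\,\tfrac{\dif y}{y}=(2\pi N(m))^{-s}\Gamma(s)$ reduces the sum over $m\neq 0$ to a sum over ideals weighted by $|U_K|$, since $\chi$ is trivial on $U_K$ and $N$ is unit-invariant; this gives \eqref{keyleft}, namely $|U_K|(2\pi)^{-s}\Gamma(s)L(s,\chi)$. On the right I would again interchange, reaching \eqref{keyright}, and then perform the substitution $u=\tfrac{2\pi N(k)}{|D_K|yN(n)}$ as in \eqref{integaleval}, which produces the factor $\Gamma(1-s)$ together with $\big(\tfrac{2\pi N(k)}{|D_K|N(n)}\big)^{s-1}$ in each term and assembles the dual Dirichlet series $\sum_{k\neq 0}g_K(k,\chi)N(k)^{-(1-s)}$. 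Equating the two expressions and solving for $L(s,\chi)$, the powers of $2\pi$, $|D_K|$ and $N(n)$ collapse to $N(n)^{-s}\big(\tfrac{2\pi}{\sqrt{|D_K|}}\big)^{2s-1}$, giving precisely \eqref{fcneqnallchi}.

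The one genuinely delicate point is the region of validity: the left-hand Mellin computation is justified only for $\Re(s)>1$, whereas the dual series $\sum_{k\neq 0}g_K(k,\chi)N(k)^{s-1}$ converges only for $\Re(s)<-1/2$ in view of the bound $|g_K(k,\chi)|\ll N(k)^{1/2}$, so the two computations live in disjoint half-planes. The way around this is to observe that the theta function $\sum_{m\neq 0}\chi(m)\exp(-2\pi y N(m))$ decays rapidly both as $y\to\infty$ (trivially) and as $y\to 0^+$ (by \eqref{PoissonsumWet1}, using precisely that the $k=0$ term is absent), so that its Mellin transform is entire in $s$; the two evaluations then agree with this single entire function on their respective half-planes, and the claimed identity therefore holds throughout by analytic continuation, $L(s,\chi)$ being entire for the nontrivial $\chi$ at hand. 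I expect this bookkeeping of convergence and continuation, rather than any individual computation, to be the main obstacle, the computations themselves being routine manipulations of Gamma integrals.
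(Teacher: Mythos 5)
Your proposal is correct and follows essentially the same route as the paper: Poisson summation with $W(s)=e^{-s}$, explicit evaluation of $\widetilde{W}_K$, and Mellin transforms of both sides of \eqref{PoissonsumWet1} equated via \eqref{keyleft}--\eqref{integaleval}. Your closing paragraph on the disjoint half-planes of convergence and the continuation through the entire Mellin transform of the theta function is a point the paper passes over silently, and it is a worthwhile addition rather than a deviation.
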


  Note that if $\chi$ is primitive, an argument similar to that given in \cite[\S 9]{Da} shows that $g_K(k, \chi)=g_K(1,\chi)\overline{\chi}(k)$. Thus the functional equation given in \eqref{fcneqnallchi} is in agreement with that given in \eqref{fneqnL}.

\subsection{Bounding $L$-functions}
\label{sec: Lbound}

 Let $K$ be a number field and $\chi$ a Hecke character of trivial infinite type modulo $q$.  Let $\widehat{\chi}$ denote the primitive Hecke character modulo $d$ that induces $\chi$.  Further write $q=q_1q_2$ uniquely such that $(q_1, d)=1$ and that $\varpi |q_2 \Rightarrow \varpi|d$. Then we have
\begin{align}
\label{Ldecomp}
\begin{split}
	L(s,  \chi )=&	\prod_{\varpi | q_1}(1-\widehat{\chi}(\varpi)N(\varpi)^{-s}) \cdot
L(s, \widehat{\chi}).
\end{split}
\end{align}

  Observe that
\begin{align*}
%%\label{key}
 \Big |1-\widehat{\chi}(\varpi)N(\varpi)^{-s}\Big | \leq 2N(\varpi)^{\max (0,-\Re(s))}.
\end{align*}

Hence
\begin{align}
\label{Lnbound}
\begin{split}
 \Big | \prod_{\varpi | q_1}\Big(1-\widehat{\chi}(\varpi)N(\varpi)^{-s} \Big )\Big | \ll
2^{\mathcal{W}_K(q_1)}N(q_1)^{\max (0,-\Re(s))} \ll N(q_1)^{\max (0,-\Re(s))+\varepsilon},
\end{split}
\end{align}
  where $\mathcal{W}_K$ denotes the number of distinct prime factors of $n$ and the last estimation above follows from the well-known bound
(which can be derived similar to the proof of the classical case over $\mq$ given in\cite[Theorem 2.10]{MVa1})
\begin{align*}
%%\label{omegabound}
   \mathcal{W}_K(h) \ll \frac {\log N(h)}{\log \log N(h)}, \quad \mbox{for} \quad N(h) \geq 3.
\end{align*}

 Now, the convexity bound for $L(s, \widehat \chi)$ (see \cite[Exercise 3, p. 100]{iwakow}) asserts that
\begin{equation} \label{Lconvexbound}
L( s,  \widehat \chi) \ll
\begin{cases}
\left( N(d)(1+|s|^2) \right)^{(1-\Re(s))/2+\varepsilon}, & 0 \leq \Re(s) \leq 1, \\ 1, & \Re(s)>1.
\end{cases}
\end{equation}

  To estimate $L( s, \widehat \chi)$ for $\Re(s) < 0$, we observe that  Stirling's formula (\cite[(5.113)]{iwakow}) implies that, for
constants $a_0$, $b_0$,
\begin{align}
\label{Stirlingratio}
  \frac {\Gamma(a_0(1-s)+ b_0)}{\Gamma (a_0s+ b_0)} \ll (1+|s|)^{a_0(1-2\Re (s))}.
\end{align}

Thus \eqref{Stirlingratio} and the functional equation \eqref{fneqnL} give that for $\Re(s)<1/2$,
\begin{align}
\label{fneqnquad1}
  L(s, \widehat \chi) \ll (N(d)(1+|s|^2))^{1/2-\Re(s)+\varepsilon}.
\end{align}

We conclude from \eqref{Lconvexbound}--\eqref{fneqnquad1} that
\begin{align} \label{Lchidbound}
\begin{split}
   L(s, \widehat \chi) \ll \begin{cases}
   1,  & \Re(s) >1,\\
   (N(d)(1+|s|^2))^{(1-\Re(s))/2+\varepsilon},  & 0\leq \Re(s) <1,\\
    (N(d)(1+|s|^2))^{1/2-\Re(s)+\varepsilon}, & \Re(s) < 0.
\end{cases}
\end{split}
\end{align}

  Combining \eqref{Ldecomp}, \eqref{Lnbound} and \eqref{Lchidbound} leads to
\begin{align}
\label{Lchidgeneralbound}
\begin{split}
   L(s, \chi) \ll N(q)^{\max (0,-\Re(s))+\varepsilon} (N(q)(1+|s|^2))^{\max \{(1-\Re(s))/2, 1/2-\Re(s), 0\}+\varepsilon}.
\end{split}
\end{align}

Next, note that \cite[Theorem 5.19]{iwakow} yields that when $\Re(s) \geq 1/2+\varepsilon$, under GRH,
\begin{align} \label{PgLest1}
\begin{split}
& \big| L( s,  \widehat\chi )\big |^{-1} \ll |sN(d)|^{\varepsilon}.
\end{split}
\end{align}

  Similar to \eqref{Lnbound}, we also have
\begin{align}
\label{Lninvbound}
\begin{split}
 \Big | \prod_{\varpi | q_1}\Big(1-\widehat{\chi}(\varpi)N(\varpi)^{-s} \Big )^{-1} \Big | \ll
N(q_1)^{\varepsilon}, \quad \mbox{for} \quad \Re(s)>0.
\end{split}
\end{align}

Thus from \eqref{Ldecomp}, \eqref{PgLest1} and \eqref{Lninvbound}, we have, under GRH, for $\Re(s) \geq 1/2+\varepsilon$,
\begin{align}
\label{PgLest2}
\begin{split}
& \big| L( s,  \chi ) \big |^{-1} \ll |sN(q)|^{\varepsilon}.
\end{split}
\end{align}

We shall need the following large sieve result for the $j$-th order Hecke $L$-functions.
\begin{lemma} \label{lem:2.3}
We use the same notation as above and suppose $K=\mq(i)$ or $\mq(\sqrt{-3})$. Let $\chi$ be any fixed Hecke character of trivial infinite type modulo $q$ with $N(q) \leq M$ and $S_j(X)$ denote the set of $j$-th order Hecke characters $\psi$ of trivial infinite type with
conductor not exceeding $X$ when it is non-empty.  We have for $s\in \comc$ and any $\varepsilon>0$ with $\Re(s) \geq 1/2$, $|s-1|>\varepsilon$,
\begin{align}
\label{L1estimation}
\sum_{\substack{\psi \in S_j(X)}} |L(s, \chi \cdot \psi)|
\ll_M & X^{1+\varepsilon} |s|^{1/2+\varepsilon}.
\end{align}
\end{lemma}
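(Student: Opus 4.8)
The plan is to establish the large sieve bound \eqref{L1estimation} by combining an approximate functional equation for $L(s,\chi\cdot\psi)$ with the standard large sieve inequality for character sums in the number field $K$. Since $\psi$ ranges over $j$-th order Hecke characters of trivial infinite type with conductor up to $X$, and $\chi$ is a fixed character modulo $q$ with $N(q)\le M$, the product $\chi\cdot\psi$ has conductor of size $O_M(X)$. The first step is to invoke an approximate functional equation, which for $\Re(s)\ge 1/2$ expresses $L(s,\chi\cdot\psi)$ as a sum of length roughly $N(\mathfrak{f})^{1/2}(1+|s|)\ll_M X^{1/2+\varepsilon}|s|$ over ideals $\mathcal{A}$, with coefficients $(\chi\cdot\psi)(\mathcal{A})N(\mathcal{A})^{-s}$ weighted by a smooth cutoff, plus a dual sum governed by the root number $W(\chi\cdot\psi)$ coming from the functional equation \eqref{fneqnL}.

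Next I would sum over $\psi\in S(X)$ and interchange the order of summation, reducing matters to bounding $\sum_{\psi}\big|\sum_{N(\mathcal{A})\le Y}a_{\mathcal{A}}\psi(\mathcal{A})\big|$ for $Y\ll_M X^{1/2+\varepsilon}|s|$, where $a_{\mathcal{A}}=\chi(\mathcal{A})N(\mathcal{A})^{-s}$ times the smooth weight. Applying Cauchy--Schwarz in the $\psi$-variable turns this into $\#S(X)^{1/2}$ times the square-root of a mean-value $\sum_{\psi}\big|\sum_{\mathcal{A}}a_{\mathcal{A}}\psi(\mathcal{A})\big|^2$. The inner mean value is exactly the kind of quantity controlled by a large sieve inequality for $j$-th order Hecke characters over $K$: since $j\in\{3,4,6\}$ is fixed and these characters are built from the residue symbols of Section \ref{sec2.4}, one can appeal to the orthogonality/duality of the associated character sums. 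The key input will be that $\#S(X)\ll X^{1+\varepsilon}$ (conductors up to $X$ in an imaginary quadratic field give $O(X^{1+\varepsilon})$ characters) together with a large sieve bound of the form $\sum_{\psi\in S(X)}|\sum_{N(\mathcal{A})\le Y}a_{\mathcal{A}}\psi(\mathcal{A})|^2\ll (X+Y)^{1+\varepsilon}\sum_{\mathcal{A}}|a_{\mathcal{A}}|^2$.

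Assembling these estimates, with $Y\ll_M X^{1/2+\varepsilon}|s|$ and $\sum_{N(\mathcal{A})\le Y}|a_{\mathcal{A}}|^2\ll Y^{1-2\Re(s)+\varepsilon}\ll Y^{\varepsilon}$ on the line $\Re(s)\ge 1/2$, the large sieve gives a square-mean of size $X^{1+\varepsilon}|s|^{1+\varepsilon}$, and after Cauchy--Schwarz against $\#S(X)^{1/2}\ll X^{1/2+\varepsilon}$ one would want to recover the claimed $X^{1+\varepsilon}|s|^{1/2+\varepsilon}$. The condition $|s-1|>\varepsilon$ is needed to keep the pole of $\zeta_K$ at $s=1$ away, ensuring the approximate functional equation and convexity estimates hold uniformly; and the smooth weights keep the dual sum in the approximate functional equation of comparable length.

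The main obstacle I anticipate is matching the exponent on $|s|$ precisely, and more substantially, producing a large sieve inequality adapted to the fixed-order characters $\psi$ rather than to all Hecke characters of bounded conductor. The naive large sieve over all characters loses the arithmetic structure and may not give the sharp constant; one must exploit that $\psi$ is a power of a residue symbol, so that the relevant duality is really over the underlying $j$-th power residue symbols, where the quadratic/higher reciprocity laws and Gauss sum evaluations of Sections \ref{sec2.4}--\ref{section:Gauss} enter. Getting the $|s|$-aspect uniformity through the approximate functional equation — where the length grows linearly in $|s|$ but the gamma-factor ratios from \eqref{Stirlingratio} must be controlled — is the delicate quantitative point, and is likely where the bulk of the technical work lies.
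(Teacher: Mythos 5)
Your endgame coincides exactly with the paper's: the lemma is obtained by Cauchy--Schwarz from the second-moment bound $\sum_{\psi\in S(X)}|L(s,\chi\cdot\psi)|^2\ll_M (X|s|)^{1+\varepsilon}$ together with $\#S(X)\ll X^{1+\varepsilon}$. The difference is that the paper's entire proof consists of citing that second moment (from Corollary 1.4 of \cite{BGL} for $j>2$ and from (4.1) of \cite{G&Zhao3} for $j=2$) and then applying Cauchy--Schwarz, whereas you propose to rederive the second moment from scratch via an approximate functional equation plus a large sieve for $j$-th order Hecke characters. That is a legitimate route in principle --- it is essentially how the cited results are themselves proved --- but it is far more work than the lemma requires here.

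The one genuine gap is the large sieve input you posit, namely $\sum_{\psi\in S(X)}\bigl|\sum_{N(\mathcal{A})\le Y}a_{\mathcal{A}}\psi(\mathcal{A})\bigr|^2\ll (X+Y)^{1+\varepsilon}\sum_{\mathcal{A}}|a_{\mathcal{A}}|^2$. For $j=2$ this is the Heath-Brown quadratic large sieve (and its imaginary-quadratic analogues) and is available; but for $j\in\{3,4,6\}$ an inequality of this optimal strength is not known --- it is precisely the open ``cubic large sieve'' problem, the best available bounds carrying an extra term of the shape $(XY)^{2/3}$. You correctly flag this as the main obstacle, but as written the key lemma you lean on does not exist for the higher-order cases. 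The gap is repairable for the present purpose: with $Y\ll_M X^{1/2+\varepsilon}(1+|s|)$ the weaker known sieves (or the Gauss-sum/Patterson machinery underlying \cite{BGL}) still yield a second moment of size $(X|s|)^{1+\varepsilon}$, which, after Cauchy--Schwarz against $\#S(X)^{1/2}\ll X^{1/2+\varepsilon}$, recovers \eqref{L1estimation}. But you should either quote the second-moment estimate directly, as the paper does, or replace your optimal large sieve by one that is actually proved for fixed-order characters.
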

\begin{proof}
  For $j>2$, we deduce from the proof of \cite[Corollary 1.4]{BGL} that
\begin{align*}
%%\label{L4est}
\sum_{\substack{\psi \in S_j(X)}} |L(s, \chi \cdot \psi)|^2
\ll_M & (X|s|)^{1+\varepsilon}.
\end{align*}
 It is shown in \cite[(4.1)]{G&Zhao3} that the above estimation also holds for $j=2$. The lemma now follows from the above and the Cauchy-Schwarz inequality.
\end{proof}

   For $K=\mq(i)$ or $\mq(\sqrt{-3})$, let $j \geq 1$ be a rational integer such that the $j$-th order residue symbol is defined over $K$.
For any Hecke character $\chi$ of trivial infinite type, the norm of whose modulus is bounded and that $\chi(m) = 0$ at elements $m \in
\mathcal O_K$ whenever $\chi_{j,m}$ is undefined, we define
\begin{align} \label{h}
   h_j(r,s;\chi)=\sum_{\substack{ m \in \mathcal O_K \\ m \text{ primary} }}\frac {\chi(m)g_{K}(r,\chi_{j,m})}{N(m)^s}.
\end{align}

    The following lemma gives the analytic behavior of $h_j(r,s;\chi)$.
 \begin{lemma}
\label{lem1} With the notation as above and $j \geq 3$, the function $h_j(r,s;\chi)$ has meromorphic continuation to the whole complex plane. It
is holomorphic in the region $\sigma=\Re(s) > 1$ except possibly for a pole at $s = 1+1/j$. For any $\varepsilon>0$, with $\sigma_1 = 3/2+\varepsilon$,
then for $\sigma_1 \geq \sigma \geq \sigma_1- 1/2$, we have
\begin{align}
\label{hbound}
  |((j(s-1))^2-1)h_j(r,s;\chi)| \ll N(r)^{(\sigma_1-\sigma+\varepsilon)/2}(1+|s|^2)^{(\sigma_1-\sigma+\varepsilon)(j-1)/2}.
\end{align}
  For $\sigma >\sigma_1$, we have
\begin{equation*}
  |h_j(r,s;\chi)| \ll 1.
\end{equation*}
\end{lemma}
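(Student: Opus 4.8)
The plan is to identify $h_j(r,s;\chi)$ with a character twist of the Dirichlet series of $j$-th order Gauss sums whose analytic theory was developed by S.\ J.\ Patterson \cite{P}, and then to combine Patterson's continuation with a convexity argument. First I would record the arithmetic of the coefficients. By the cubic, quartic and sextic reciprocity and supplementary laws collected in Section \ref{sec2.4}, the Gauss sum $g_K(r,\chi_{j,m})$ is twisted-multiplicative in $m$, and the order-$j$ analogue of part (iii) of Lemma \ref{GausssumMult} evaluates it at prime powers; in particular $g_K(r,\chi_{j,\varpi^l})$ vanishes unless $l$ is constrained relative to the $\varpi$-adic valuation of $r$. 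Peeling off the finitely many ramified Euler factors and the fixed bounded-conductor twist $\chi$, this reduces \eqref{h} to the standard generating series $\sum_{m\ \mathrm{primary}} g_K(r,\chi_{j,m})N(m)^{-s}$ attached to the $j$-th power residue symbol over $K$, to which Patterson's results apply directly.

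Next I would treat the region of absolute convergence. Using the prime-power evaluation one has $|g_K(r,\chi_{j,m})|\ll N(m)^{1/2+\varepsilon}$ whenever $(m,r)=1$, while for the remaining $m$ the Gauss sum can be as large as $N(m)$ but such $m$ are confined to divisors of the non-$j$-th-power-free part of $r$ and hence number only $O(N(r)^{\varepsilon})$. Summing the absolute values shows that the series defining $h_j(r,s;\chi)$ converges absolutely for $\sigma>\sigma_1=3/2+\varepsilon$ and satisfies $|h_j(r,s;\chi)|\ll 1$ there, uniformly in $\Im(s)$; this establishes the last assertion of the lemma.

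I would then invoke Patterson's theorem to continue the reduced series meromorphically to all of $\mathbb{C}$. The essential point is that this Gauss-sum series is, up to normalization, a Fourier--Whittaker coefficient of a metaplectic Eisenstein series on the $j$-fold metaplectic cover, so its only pole in the half-plane $\sigma>1$ lies at $s=1+1/j$ and comes from the residue of that Eisenstein series; this is precisely the pole recorded in the statement. Multiplying by $(j(s-1))^2-1=(j(s-1)-1)(j(s-1)+1)$ removes this pole together with its reflected companion at $s=1-1/j$, so that $((j(s-1))^2-1)h_j(r,s;\chi)$ is holomorphic throughout the strip $\sigma_1-\tfrac12\le\sigma\le\sigma_1$.

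The main obstacle, and where the real work lies, is the growth estimate \eqref{hbound} across that strip. Here I would use the functional equation furnished by Patterson's theory, which reflects $s$ about the line through $1+1/j$ and introduces a conductor factor that is a power of $N(r)$ together with a ratio of archimedean factors; by Stirling's formula this ratio contributes a power of $(1+|s|^2)$ whose exponent scales with the $j-1$ gamma factors of the $j$-fold cover, producing the shape $(1+|s|^2)^{(\sigma_1-\sigma+\varepsilon)(j-1)/2}$. Bounding the reflected series by its absolute-convergence value $\ll 1$ gives the estimate on the edge $\sigma=\sigma_1-\tfrac12$, and applying the Phragm\'en--Lindel\"of convexity principle to the holomorphic function $((j(s-1))^2-1)h_j(r,s;\chi)$ between the two edges interpolates to \eqref{hbound}, with the exponents of $N(r)$ and of $(1+|s|^2)$ decreasing linearly as $\sigma$ increases toward $\sigma_1$. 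Keeping all constants uniform in $r$ and in $s$, and matching Patterson's normalization to the residue symbol $\chi_{j,m}$ and the twist $\chi$ used here, are the delicate points that require care.
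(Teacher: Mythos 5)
The paper does not actually prove this lemma: it simply cites the Lemma on p.~200 of \cite{P} and remarks that an inspection of Patterson's argument (second display on p.~205) shows the bound \eqref{hbound} persists inside the excluded disc $|s-(1+1/j)|\le 1/(2j)$ once the polynomial factor $(j(s-1))^2-1$ is inserted. Your proposal is therefore a reconstruction of Patterson's proof rather than an alternative to anything in the paper, and its overall architecture --- absolute convergence and $|h_j|\ll 1$ for $\sigma>\sigma_1$ from the pointwise Gauss sum bounds, meromorphic continuation and the single pole at $s=1+1/j$ from the theory of metaplectic Eisenstein series on the $j$-fold cover, then functional equation plus Phragm\'en--Lindel\"of for the growth estimate --- is indeed the correct mechanism, and multiplying by $(j(s-1))^2-1$ before applying convexity is exactly the point the authors need beyond Patterson's stated formulation.

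There is, however, one concrete misstep in your convexity argument. You propose to take the two edges of the Phragm\'en--Lindel\"of strip at $\sigma=\sigma_1$ and $\sigma=\sigma_1-\tfrac12=1+\varepsilon$, and to bound the latter edge by ``reflecting'' via the functional equation and invoking absolute convergence of the reflected series. But the functional equation reflects about $\Re(s)=1$ (sending $s$ essentially to $2-s$), so the reflection of $\sigma=1+\varepsilon$ lands at $\sigma=1-\varepsilon$, well outside the half-plane $\sigma>3/2$ of absolute convergence; no bound is obtained there this way. The correct procedure is to reflect from $\sigma\le \sigma_1-1-\varepsilon'$ (whose image lies in $\sigma\ge\sigma_1+\varepsilon'$), run convexity for $((j(s-1))^2-1)h_j$ over the full strip $\sigma_1-1\le\sigma\le\sigma_1$ --- note that the exponents $(\sigma_1-\sigma+\varepsilon)/2$ for $N(r)$ and $(\sigma_1-\sigma+\varepsilon)(j-1)/2$ for $(1+|s|^2)$ are precisely the linear interpolation between $\ll 1$ at $\sigma=\sigma_1$ and $N(r)^{1/2}(1+|s|^2)^{(j-1)/2}$ at $\sigma=\sigma_1-1$ --- and only then restrict to the sub-strip $\sigma\ge\sigma_1-\tfrac12$ asserted in the lemma. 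With that repair (and the normalization bookkeeping you already flag), your outline matches what is actually in \cite{P}.
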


The above result is essentially contained in the Lemma on Page 200 of \cite{P}, except that the estimate \eqref{hbound} is stated only for $|s-(1+1/j)|>1/(2j)$.  An inspection of the proof there  (see the second display on Page 205 of \cite{P}) shows that the estimation given in \eqref{hbound} is in fact valid.

\subsection{Some results on multivariable complex functions}
	
We gather here some results from multivariable complex analysis. We begin with the notion of a tube domain.
\begin{defin}
		An open set $T\subset\mc^n$ is a tube if there is an open set $U\subset\mr^n$ such that $T=\{z\in\mc^n:\ \Re(z)\in U\}.$
\end{defin}
	
   For a set $U\subset\mr^n$, we define $T(U)=U+i\mr^n\subset \mc^n$.  We quote the following Bochner's Tube Theorem \cite{Boc}.
\begin{theorem}
\label{Bochner}
		Let $U\subset\mr^n$ be a connected open set and $f(z)$ a function holomorphic on $T(U)$. Then $f(z)$ has a holomorphic continuation to
the convex hull of $T(U)$.
\end{theorem}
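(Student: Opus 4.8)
The statement is Bochner's tube theorem, which in the paper is simply quoted from \cite{Boc}; nonetheless, here is how I would prove it. The plan is to dispose of the geometry first and then isolate the analytic content in a Laplace-transform computation that makes the appearance of the convex hull transparent. First I would record that the convex hull of the tube $T(U)=U+i\mr^n$ equals $T(\widehat U)$, where $\widehat U:=\mathrm{conv}(U)$: since every fibre of $T(U)$ over its real part is the whole space $i\mr^n$, forming convex combinations convexifies only the real parts. Thus the theorem is equivalent to the assertion that $f$ continues holomorphically from $T(U)$ to $T(\widehat U)$. Because $U$ is connected and each fibre $i\mr^n$ is connected, $T(U)$ is connected, so analytic continuation is unique; consequently it is enough to produce local continuations near the fibre over each point of $\widehat U$, and these patch together automatically.

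For the analytic heart I would pass to the Fourier--Laplace picture, which explains why convexity is the governing condition. Fixing $x\in U$ and expanding the fibre function $y\mapsto f(x+iy)$, the Cauchy--Riemann equations $\partial_{x_j}f=-i\,\partial_{y_j}f$ force the Fourier transform in $y$ to take the form $\widehat f_x(\xi)=e^{\langle x,\xi\rangle}\mu(\xi)$ for a single density $\mu$ independent of $x$, so that
\begin{equation*}
  f(x+iy)=\int_{\mr^n} e^{\langle x,\xi\rangle+i\langle y,\xi\rangle}\,\mu(\xi)\,\dif\xi .
\end{equation*}
The set of real $x$ for which this Laplace-type integral converges absolutely is convex, since $x\mapsto \int_{\mr^n} e^{\langle x,\xi\rangle}|\mu(\xi)|\,\dif\xi$ is a logarithmically convex function of $x$ by H\"older's inequality. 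Hence the natural domain of definition of $f$ is already convex in the real directions, and the integral furnishes the desired holomorphic continuation to $T(\widehat U)$.

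The main obstacle is that this representation is literally available only when $f$ has controlled growth, so that $\mu$ is a tempered distribution, whereas a general holomorphic $f$ on $T(U)$ need not be tempered on its fibres. I would handle this by localisation: cover $U$ by relatively compact convex subsets, on which $f$ is bounded and the transform argument applies after inserting smooth cut-offs, and then reassemble the pieces using the uniqueness of continuation noted above together with an exhaustion of $\widehat U$ by an increasing family of convex hulls of finitely many points of $U$. Equivalently, one can avoid transforms entirely and argue one variable at a time: after an invertible real-linear change of coordinates, which preserves both tubes and convex hulls, a segment $[a,b]\subset\widehat U$ with endpoints in $U$ becomes an interval on the $z_1$-axis, and $f$ is extended across the gap by a Cauchy integral in $z_1$ whose contour is deformed using the freedom to translate arbitrarily in the imaginary directions. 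In either route the delicate point---and the step I expect to absorb most of the work---is justifying the limiting interchange (convergence of the transform, or the contour deformation) uniformly in the remaining parameters; the convexity of $\widehat U$ is exactly what guarantees that it goes through.
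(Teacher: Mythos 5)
The paper offers no proof of this statement at all: it is quoted verbatim from Bochner's work, so there is nothing internal to compare your argument against. Judged on its own terms, your sketch correctly reduces the problem to showing that $\mathrm{conv}(T(U))=T(\mathrm{conv}(U))$ and correctly identifies the Fourier--Laplace picture as the reason convexity is the right notion, but the two routes you propose for making this rigorous both contain gaps that are not merely technical. First, the claim that $f$ is bounded on $K+i\mr^n$ for $K$ a relatively compact convex subset of $U$ is false: relative compactness constrains only the real parts, while the fibres $i\mr^n$ remain unbounded, and a holomorphic function on a tube can blow up along them (e.g.\ $e^{-z_1^2}$ has modulus $e^{y_1^2-x_1^2}$). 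So the fibre functions $y\mapsto f(x+iy)$ need not be tempered, and inserting smooth cut-offs in $y$ destroys holomorphy, which is precisely the hypothesis you used (via the Cauchy--Riemann equations) to force the representation $\widehat f_x(\xi)=e^{\langle x,\xi\rangle}\mu(\xi)$ with $\mu$ independent of $x$. The localisation therefore does not rescue the transform argument.

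Second, the one-variable alternative as stated also does not close: for a segment $[a,b]\subset\mathrm{conv}(U)$ with endpoints in $U$, the function $z_1\mapsto f(z_1,z')$ is only known to be holomorphic for $\Re(z_1)$ near the endpoints, so there is no contour surrounding the gap on which a Cauchy integral can be written down, and translating in the imaginary directions does not create one. The missing idea in the standard proofs is a Kontinuit\"atssatz-type mechanism: one shows that if $f$ is holomorphic on the tube over two segments sharing an endpoint, it extends to the tube over the triangle they span, by sweeping the triangle with a family of analytic discs whose boundaries stay inside the original tube and invoking the maximum principle (equivalently, a Hartogs continuity argument showing the set of parameters for which the extension exists is open and closed). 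Iterating this over triangles exhausts $\mathrm{conv}(U)$. Without that device, or a genuinely uniform justification of the transform representation, the proposal does not constitute a proof.
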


 We denote the convex hull of an open set $T\subset\mc^n$ by $\widehat T$.  Our next result is \cite[Proposition C.5]{Cech1} on the modulus of
holomorphic continuations of multivariable complex functions.
\begin{prop}
\label{Extending inequalities}
		Assume that $T\subset \mc^n$ is a tube domain, $g,h:T\rightarrow \mc$ are holomorphic functions, and let $\tilde g,\tilde h$ be their
holomorphic continuations to $\widehat T$. If  $|g(z)|\leq |h(z)|$ for all $z\in T$ and $h(z)$ is nonzero in $T$, then also $|\tilde g(z)|\leq
|\tilde h(z)|$ for all $z\in \widehat T$.
\end{prop}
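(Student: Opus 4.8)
The plan is to reduce the statement to a single maximum-modulus assertion for the quotient $F := g/h$. Since $h$ is nowhere zero on the tube $T$ and $|g|\le |h|$ there, $F$ is holomorphic on $T$ with $|F(z)|\le 1$ for all $z\in T$. Applying Bochner's Tube Theorem (Theorem \ref{Bochner}) to $F$, it extends to a holomorphic $\tilde F$ on $\widehat T$. Since $\tilde F\,\tilde h$ and $\tilde g$ are both holomorphic on $\widehat T$ and agree with $g=Fh$ on $T$, the identity theorem gives
\begin{equation*}
  \tilde g = \tilde F\,\tilde h \quad \text{on all of } \widehat T .
\end{equation*}
In particular, at any zero of $\tilde h$ in $\widehat T$ the finite factor $\tilde F$ forces $\tilde g$ to vanish as well, so $|\tilde g|=0=|\tilde h|$ there, while wherever $\tilde h\ne 0$ we have $|\tilde g|=|\tilde F|\,|\tilde h|$. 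Hence the proposition follows at once from the single claim that $|\tilde F(z)|\le 1$ for every $z\in\widehat T$, and this is the only point requiring genuine proof.

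To bound $\tilde F$ I would use the tube structure $T=T(U)$ and $\widehat T=T(\widehat U)$, where $\widehat U$ is the convex hull of $U$ in $\mr^n$. Fix $z_0=x_0+iy_0\in\widehat T$ with $x_0\in\widehat U$. By Carathéodory's theorem $x_0$ is a convex combination of points of $U$, and a routine induction on the number of vertices (reducing each step to a single segment) lets me assume $x_0=(1-t)a+t b$ with $a,b\in U$ and $t\in(0,1)$. I then consider the one-variable function
\begin{equation*}
  \varphi(\zeta)=\tilde F\big((1-\zeta)a+\zeta b + i y_0\big),
\end{equation*}
which is holomorphic on the strip $0<\Re(\zeta)<1$, since the real part of its argument, $(1-\Re\zeta)a+(\Re\zeta)b$, lies in $\widehat U$ and so the argument stays in $\widehat T$. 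On the boundary lines $\Re(\zeta)\in\{0,1\}$ that real part equals $a$ or $b$, both in $U$, so the argument lies in $T$ and $|\varphi|\le 1$ there. Provided $\varphi$ is controlled as $\Im(\zeta)\to\pm\infty$, the Phragmén--Lindel\"of (three-lines) principle then forces $|\varphi|\le 1$ throughout the strip, and evaluating at $\zeta=t$ gives $|\tilde F(z_0)|\le 1$.

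The hard part is exactly the growth control demanded by the maximum principle on the \emph{unbounded} strip: the three-lines theorem is false for functions merely bounded on the two boundary lines, so one must know that $\tilde F$ stays bounded as the imaginary part runs to infinity along the slab $[a,b]+i\mr^n$. This boundedness is the real content; it is equivalent to the assertion that a holomorphic function bounded on a connected tube remains bounded, with the same bound, on its convex hull. I would secure it in one of two equivalent ways: either through the plurisubharmonicity of $\log|\tilde F|$ together with the convexity of $x\mapsto \log\sup_{y}|\tilde F(x+iy)|$ on $\widehat U$ (a Hadamard-type three-lines theorem adapted to tubes), or through the description of $\widehat T$ by analytic disks with boundary lying in $T$, which transfers the bound from $\partial\mathbb{D}$ to the centre by the ordinary maximum-modulus principle on the disk. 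In the applications relevant to this paper the functions involved are of polynomial growth in vertical strips, so the Phragmén--Lindel\"of step applies directly and this obstacle does not actually arise; I would record that remark to keep the argument self-contained.
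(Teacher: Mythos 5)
The paper offers no proof of this proposition at all: it is quoted verbatim from \v Cech \cite[Proposition C.5]{Cech1}, so there is nothing internal to compare against. Your reduction is the standard one and, as far as I can tell, the same as in the cited source: pass to $F=g/h$, which is holomorphic with $|F|\le 1$ on $T$ because $h$ is zero-free there, extend $F$ by Bochner's theorem, identify $\tilde g=\tilde F\,\tilde h$ on the (connected, since convex) hull by the identity theorem, and observe that the inequality at zeros of $\tilde h$ is then automatic. This correctly isolates the entire content of the proposition in the single claim that the Bochner extension of a function bounded by $1$ on a tube is still bounded by $1$ on the convex hull.

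The one substantive criticism is that the route you foreground for that claim --- the three-lines theorem on the strip $0<\Re(\zeta)<1$ --- is precisely the route that does not close: as you yourself note, Phragm\'en--Lindel\"of needs a priori growth control of $\tilde F$ on the slab $[a,b]+i\mr^n$, and such control is not given by the hypotheses (it is available in this paper's applications, where everything has polynomial growth in vertical strips, but not for the proposition as stated). Moreover, your first proposed repair --- convexity of $x\mapsto\log\sup_y|\tilde F(x+iy)|$ on $\widehat U$ --- is essentially a restatement of the claim to be proved rather than an independent input. The repair that actually works in general is the second one you mention only in passing: in the disc-chain proof of Bochner's theorem, each elementary extension step is effected by a Cauchy integral over an analytic disc whose boundary lies in the already-controlled region, so the ordinary maximum principle on the disc propagates the bound $|F|\le 1$ step by step to all of $\widehat T$, with no growth hypothesis needed. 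If you promote that argument from an aside to the main line (or simply cite the standard fact that sup-norms are preserved under extension from a tube to its convex hull), the proof is complete and matches the intended one.
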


%%----------------------------------------------------------------------------
\section{Proof of Theorem \ref{Thmfirstmoment} and \ref{Thmfirstmomentjlarge}}
\label{sec Poisson}
%%----------------------------------------------------------------------------

\subsection{Initial Treatment}

  Recall that for the number field $K$ considered here, every ideal is generated by a primary element. It follows that for any Hecke character
$\chi$ of trivial infinite type, the associated $L$-function $L(s,\chi)$ is given by a Dirichlet series that for $\Re(s)$ large enough,
\begin{align}
\label{Lseries}
    L(s, \chi) =\sum_{n \odd}\frac {\chi(n)}{N(n)^s}.
\end{align}
   In what follows, we adapt the convention that when $K=\mq(\sqrt{-3})$ and $\chi=\chi_{j, n}$ for $j=2$ or $6$ whenever it is defined, by
primary we always mean $E$-primary. \newline

  We now define functions $A_j(s,w)$ for $j=2,3, 4$ or $6$ for $\Re(s)$, $\Re(w)$ sufficiently large by the absolutely convergent double Dirichlet
series
\begin{align}
\label{Aswexp}
\begin{split}
A_j(s,w)=&
\begin{cases}
\displaystyle \frac {1}{2}\sum_{\chi \in C_K}\sum_{0 \neq n \in \mathcal O_K}\frac{L(w, \chi \cdot \chi^{(n)}_{j})}{N(n)^s}, & j=2, \\
\displaystyle \frac {1}{\#h_{(S_{K,j})}}\sum_{\chi \bmod {S_{K,j}}}\sum_{0 \neq n \in \mathcal O_K}\frac{L(w, \chi \cdot
\chi^{(n)}_{j})}{N(n)^s}, & j >2.
\end{cases}
\end{split}
\end{align}

  When $\chi_{2, m}$ is trivial on $U_K$ for some $m \in \mathcal O_K$, we shall write $\chi_{2, m}(U_K)=1$ to indicate this.  Using the representation given in \eqref{Lseries}, we see that
\begin{align*}
%%\label{A2swexp1}
\begin{split}
A_2(s,w)=& \frac {1}{2}\sum_{\chi \in C_K}\sum_{m\odd}\sum_{0 \neq n \in \mathcal O_K}\frac{\chi(m)\leg {n}m_2}{N(n)^sN(m)^w} = \sum_{\substack{ m\odd \\ (m, B_K)=1 \\ \chi_{2,m}(U_K)=1} }\sum_{0 \neq n \in \mathcal O_K}\frac{\leg {n}m_2}{N(n)^sN(m)^w} \\
=& |U_K|\sum_{\substack{ m\odd \\ (m,B_K)=1 \\ \chi_{2,m}(U_K)=1} }\sum_{n\odd}\frac{\leg {n}m_2}{N(n)^sN(m)^w} = |U_K| \sum_{\substack{ m\odd \\ (m,B_K)=1 \\ \chi_{2,m}(U_K)=1} } \frac{L(s, \chi_{2,m})}{N(m)^w}.
\end{split}
\end{align*}
  Similarly, we have for $j>2$,
\begin{align}
\label{Aswexp1}
\begin{split}
A_j(s,w)=& \frac {1}{\#h_{(S_{K,j})}}\sum_{\chi \bmod {S_{K,j}}}\sum_{m\odd}\sum_{0 \neq n \in \mathcal O_K}\frac{\chi(m)\leg
{n}m_j}{N(n)^sN(m)^w} = \frac {1}{\#h_{(S_{K,j})}}\sum_{\substack{ m\odd \\ m \equiv 1 \bmod {S_{K,j}} } }\sum_{0 \neq n \in \mathcal O_K}\frac{\leg
{n}m_j}{N(n)^sN(m)^w} \\
=& |U_K|\sum_{\substack{ m\odd \\ m \equiv 1 \bmod {S_{K,j}} } }\sum_{n\odd}\frac{\leg {n}m_j}{N(n)^sN(m)^w} = |U_K| \sum_{\substack{ m\odd \\ m \equiv 1 \bmod {S_{K,j}} } } \frac{L(s, \chi_{j,m})}{N(m)^w}.
\end{split}
\end{align}

  We now apply \eqref{Aswexp} and the Mellin inversion to see that for $\Re(s)=c$ large enough,
\begin{align}
\label{MellinInversionj}
\begin{split}
\frac {1}{2}\sum_{\chi \in C_K} \sum_{0 \neq n \in \mathcal O_K} L(w, \chi \cdot \chi^{(n)}_{2})\Phi \left( \frac
{N(n)}X \right)=& \frac1{2\pi i}\int\limits_{(c)}A_2(s,w)X^s\widehat\Phi(s) \dif s, \; \mbox{and} \\
\frac {1}{\#h_{(S_{K,j})}}\sum_{\chi \bmod {S_{K,j}}} \sum_{0 \neq n \in \mathcal O_K} L(w, \chi \cdot \chi^{(n)}_{j})\Phi \left( \frac
{N(n)}X \right)=&  \frac1{2\pi i}\int\limits_{(c)}A_j(s,w)X^s\widehat\Phi(s) \dif s,
\end{split}
\end{align}
  where the Mellin transform $\widehat{f}$ of any function $f$ is defined as
\begin{align*}
%%\label{fMellin}
     \widehat{f}(s) =\int\limits^{\infty}_0f(t)t^s\frac {\dif t}{t}.
\end{align*}

The next few sections are devoted to establishing analytical properties of $A_j(s,w)$.
   
\subsection{First region of absolute convergence of $A_j(s,w)$}
\label{Sec: first region}

  We apply \eqref{L1estimation} and partial summation to treat the sum over $n$ in \eqref{Aswexp} to see that except for a simple pole at $w=1$, all sums of the right-hand side expression in \eqref{Aswexp} are convergent for $\Re(s)>1$, $\Re(w) \geq 1/2, \ |w-1|>\varepsilon$. This further implies that $(w-1)A_j(s,w)$ is holomorphic when $\Re(s)>1$, $\Re(w) \geq 1/2$. \newline

  When $\Re(w) < 1/2$,  we recall that $\chi^{(n)}_{j}$ is a Hecke character of trivial infinite type whenever it is defined.  We write $n=n_1n^j_2$ such that $n_1, n_2$ are primary and $n_1$ is $j$-th power free. We also write $\widehat{\chi_{j,n_1}}$ for the primitive Hecke character that induces $\chi \cdot \chi^{(n_1)}_j$ for any Hecke character $\chi$ of trivial infinite type. It follows from our discussions in Section \ref{sec2.4} that for all values of $j$ under our consideration, the conductor of $\widehat{\chi_{j,n_1}}$ divides $S_{K,j}n_1$ and is divisible by $n_1/(n_1, S_{K,j})$, where $(a, b)$ denotes the primary greatest common divisor of any $a, b \in O_K$. We now have
\begin{align*}
\begin{split}
%%\label{Abound}
	\big|L(w,  \chi \cdot \chi^{(n)}_j)\big|=&	\big|\prod_{\varpi | S_{K,j}n_2}(1-\widehat{\chi_{j,n}}(\varpi)N(\varpi)^{-w})\big| \cdot
\big| L(w, \widehat{\chi_{j,n_1}})\big|.
\end{split}
\end{align*}

  We bound the product above in a way similar to that given in \eqref{Lnbound}.  Thus, if $\Re(w) < 1/2$,
\begin{align}
\begin{split}
\label{Abound}
		A_j(s,w)
\ll&  \sum_{\substack{n_2 \odd }}\frac {N(n_2)^{\max (0,-\Re(w))+\varepsilon}
}{N(n_2)^{js}}\sum_{\substack{n_1 \odd}}\Big | \frac{L(w, \widehat{\chi_{j, n_1}})}{N(n_1)^s} \Big | \\
\ll & \sum_{\substack{n_2 \odd }}\frac {N(n_2)^{\max (0,-\Re(w))+\varepsilon}
}{N(n_2)^{js}}\sum_{\substack{n_1 \odd}}\frac {\Gamma(1-w)}{\Gamma(w)}\Big | \frac{L(1- w, \overline{\widehat{\chi_{j, n_1}}})}{N(n_1)^{s+w-1/2}} \Big |,
\end{split}
\end{align}
  where the last bound above comes by using the functional equation \eqref{fneqnL}. \newline

  We note that when $\Re(w) < 1/2$,
\begin{align}
\label{L1wbound}
\begin{split}
		\big |L(1- w, \overline{\widehat{\chi_{j, n_1}}}) \big | \ll &	\big|\prod_{\varpi | S_{K,j}n_1}(1-\overline{\widehat{\chi_{j, n_1}}}(\varpi)N(\varpi)^{-(1-w)})^{-1}\big| \cdot
\big| L(1-w, \widehat{\chi} \cdot \widehat{\chi^{(n_1)}_j)}\big| \ll N(n_1)^{\varepsilon}\big| L(1-w, \widehat{\chi} \cdot \widehat{\chi^{(n_1)}_j)}\big|.
\end{split}
\end{align}

  We apply the above estimation to the last expression in \eqref{Abound}, then we apply \eqref{L1estimation} and partial summation to treat the sum over $n_1$ there to see that all sums of the right-hand side expression in \eqref{Abound} are convergent for $\Re(js)>1$, $\Re(js+w)>1$, $\Re(s+w)>3/2$ and
$\Re(w) < 1/2$.  We then deduce that $(w-1)A_j(s,w)$ is holomorphic in the region
\begin{equation*}
%%\label{key}
		S_{0,j}=\{(s,w): \Re(s)>1, \ \Re(s+w)>3/2\}.
\end{equation*}

 Next, the last expression of \eqref{Aswexp1} gives that, for $j>2$,
\begin{align} \label{Sum A(s,w,z) over n}		
A_j(s,w)=& |U_K|\sum_{\substack{ m\odd \\ m \equiv 1 \bmod {S_{K,j}} } } \frac{L(s, \chi_{j,m})}{N(m)^w}.
\end{align}

The above sum can be treated in a way analogous to the bound in \eqref{Abound}, upon interchanging the role
of $s$ and $w$ there. Here we point out that by writing $m=m_1m^j_2$ with $m_1, m_2$ being primary and $m_1$ $j$-th power free, the symbol $\chi_{j,m_1}$ is again a Hecke character of trivial infinite type modulo $m_1$ since both $\chi_{j,m}$ and $\chi_{j,m^{j}_2}$ are trivial on $U_K$.    Similar arguments apply to the case $j=2$ as well. Thus, it follows from our discussions above that except for a simple pole at $s=1$ arising from the summands with $m=
\text{a $j$-th power}$, $(s-1)A_j(s,w)$ is holomorphic in the region
\begin{align*}
%%\label{key}
		S_{1,j}=& \{(s,w): \Re(w)>1, \ \Re(s+w)>3/2\}.
\end{align*}

 Notice that the convex hull of $S_{0,j}$ and $S_{1,j}$ is
\begin{equation*}
%%\label{Region of convergence of A(s,w,z)}
		S_{2,j}=\{(s,w):\ \Re(s+w)> 3/2 \}.
\end{equation*}

Now Theorem \ref{Bochner} yields that $(s-1)(w-1)A_j(s,w)$ can be holomorphically continued to the region $S_{2,j}$ for all $j$'s concerned.

\subsection{Residue of $A_j(s,w)$ at $s=1$}
\label{sec:resA}

For $j>2$, we see that $A_j(s,w)$ has a pole at $s=1$ arising from the terms with $m= \text{$j$-th power}$ from \eqref{Sum A(s,w,z) over n}. In order
to compute the corresponding residue for later use, we define the sum
\begin{align*}
\begin{split}
%%\label{A1def}
 A_{j,1}(s,w) =: |U_K| \sum_{\substack{m \odd \\ m \equiv 1  \bmod {S_{K,j}} \\ m =  \text{a $j$-th power}}}\frac{\zeta_K(s)\prod_{\varpi |
m}(1-N(\varpi)^{-s}) }{N(m)^w}.
\end{split}
\end{align*}

 For any $t \in \mc$ and any primary element $n \in \mathcal O_K$, let $a_t(n)$ be the multiplicative function defined on the set of primary
elements such that $a_t(\varpi^k)=1-1/N(\varpi)^t$ for any primary prime $\varpi$.  This notation renders
\begin{align*}
\begin{split}
%%\label{A1def1}
 A_{j,1}(s,w)
=   |U_K|\zeta_K(s)\sum_{\substack{m \odd \\ m \equiv 1  \bmod {S_{K,j}} \\ m =  \text{a $j$-th power}}}\frac{a_s(m) }{N(m)^w}
=\frac {|U_K| \zeta_K(s)}{\#h_{(S_{K,j})}}\sum_{\chi \bmod {S_{K,j}}}\sum_{\substack{m \odd \\ m =  \text{a $j$-th power}}}\frac{\chi(m)a_s(m)
}{N(m)^w} .
\end{split}
\end{align*}

 We recast the last sum above as an Euler product, obtaining
\begin{align} \label{residuesgen}
\begin{split}
	 A_{j,1}(s,w)
=&   \frac {|U_K|\zeta_K(s)}{\#h_{(S_{K,j})}}\sum_{\chi \bmod {S_{K,j}}} \prod_{\varpi \odd}\sum_{\substack{m \geq0}}\frac{\chi(\varpi^{jm})
a_s(\varpi^{jm})}{N(\varpi)^{jmw}} \\
=& \frac {|U_K|\zeta_K(s)}{\#h_{(S_{K,j})}}\sum_{\chi \bmod {S_{K,j}}}\prod_{\varpi \odd}\lz1+\lz 1-\frac 1{N(\varpi)^s} \pz \frac
{\chi(\varpi)^j}{N(\varpi)^{jw}}(1-\chi(\varpi)^jN(\varpi)^{-jw})^{-1} \pz \\
=: &  \frac {|U_K|\zeta_K(s)}{\#h_{(S_{K,j})}}\sum_{\chi \bmod {S_{K,j}}} \frac{L(jw, \chi^j)}{L(s+jw, \chi^j)}.
\end{split}
\end{align}

It follows from \eqref{residuesgen} that except for simple poles at $s=1$ and $w=1/j$, the function $A_{j,1}(s,w)$ is holomorphic in the region
\begin{align} \label{S3}
	S_{3,j}=\Big\{(s,w):\ &  \Re(s+jw)>1 \Big\}.
\end{align}

   Recall that we denote $r_K$ for the residue of $\zeta_K(s)$ at $s = 1$. It follows that for $j>2$,
\begin{align}
\label{Residue at s=1}
 \res_{s=1}& A_j(s, \tfrac{1}{2}+\alpha) = \res_{s=1} A_{j,1}(s, \tfrac{1}{2}+\alpha)
=\frac {|U_K|r_K}{\#h_{(S_{K,j})}}\sum_{\chi \bmod {S_{K,j}}} \frac{L(j(\tfrac{1}{2}+\alpha), \chi^j)}{L(1+j(\tfrac{1}{2}+\alpha), \chi^j)}.
\end{align}

  Similar arguments apply to the case $j=2$ as well. We write for brevity $\psi_1$ for the principal character modulo $B_K$ to note that in this case $\chi^2$ becomes $\psi_1$ so that we have
\begin{align}
\label{Residue at s=1j=2}
\begin{split}
 \res_{s=1} A_2(s, \tfrac{1}{2}+\alpha) =&
|U_K|r_K \frac{\zeta_K(1+2\alpha)}{\zeta_K(2+2\alpha)}\prod_{\varpi | B_K}\Big(1+\frac 1{N(\varpi)^{1+2\alpha}}\Big )\Big(1-\frac
1{N(\varpi)^{2+2\alpha}}\Big )^{-1}.
\end{split}
\end{align}

\subsection{Second region of absolute convergence of $A_j(s,w)$}

 We infer from \eqref{Sum A(s,w,z) over n} that for $j>2$,
\begin{align} \label{A1A2}
 A_j(s,w) = |U_K|\sum_{\substack{m \odd \\ m \equiv 1 \bmod {S_{K,j}} \\ m =  \text{a $j$-th power}}}\frac{L( s, \chi_{j,m})}{N(m)^w}
+|U_K|\sum_{\substack{m \odd \\ m \equiv 1 \bmod {S_{K,j}} \\ m \neq  \text{a $j$-th power}}}\frac{L( s, \chi_{j,m})}{N(m)^w} =: \ A_{j,1}(s,w)+A_{j,2}(s,w).
\end{align}
  We decompose for $A_2(s,w)$ similarly, replacing the condition $m \equiv 1 \bmod {S_{K,j}}$ in $A_{j,1}, A_{j,2}$ above by the condition $(m, B_K)=1, \chi_{2,m}(U_K) =1$ in $A_{2,1}, A_{2,2}$.  Recall from our discussions in the previous section that $A_{j,1}(s,w)$ is holomorphic in the region $S_{3,j}$, except for simple poles at $s=1$ and $w=1$ for all $j$ under our consideration. \newline

  Next, observe that for $m \equiv 1 \bmod S_{K,j}$ and $m$ not a $j$-th power, the Hecke character $\chi_{j,m}$ is modulo $m$ of trivial
infinite type  such that $\chi_{j,m}(0)=0$ and $g_K(0,\chi_{j,m})=0$. We thus apply the functional equation given in Proposition \ref{Functional equation with Gauss sums} for $L\lz s, \chi_{j,m} \pz$ in the case $m \neq \text{a $j$-th power}$ to see that for $j>2$,
\begin{align}
\begin{split}
\label{Functional equation in s}
 A_{j,2}(s,w) =\Big(\frac {2\pi}{\sqrt{|D_K|}}\Big )^{2s-1}\frac {\Gamma(1-s)}{\Gamma(s)} C_j(1-s,s+w),
\end{split}
\end{align}
 where $C_j(s,w)$ is given by the double Dirichlet series
\begin{align}
\label{Cj12}
\begin{split}
		C_j(s,w)=& \sum_{\substack{q, m \in \mathcal O_K \\ q\neq 0, \ m \odd \\ m \equiv 1  \bmod {S_{K,j}} \\ m \neq  \text{a $j$-th
power}}}\frac{g_K(q, \chi_{j,m})}{N(q)^sN(m)^w}=\sum_{\substack{q, m \in \mathcal O_K \\ q\neq 0, \ m \odd \\ m \equiv 1  \bmod {S_{K,j}}
}}\frac{g_K(q, \chi_{j,m})}{N(q)^sN(m)^w}-\sum_{\substack{q, m \in \mathcal O_K \\ q\neq 0, \ m \odd \\ m \equiv 1  \bmod {S_{K,j}} \\ m =
\text{a $j$-th power}}}\frac{g_K(q, \chi_{j,m})}{N(q)^sN(m)^w} \\
=: & \ C_{j,1}(s,w)-C_{j,2}(s,w).
\end{split}
\end{align}	

   Note that the functional equation \eqref{Functional equation in s} and the decomposition \eqref{Cj12} are also valid  for $A_{2,2}(s,w)$ and $C_2(s,w)$, upon replacing the condition $m \equiv 1 \bmod {S_{K,j}}$ appearing in \eqref{Functional equation in s} and \eqref{Cj12} by the condition $(m, B_K)=1, \chi_{2,m}(U_K) =1$ throughout.

\subsubsection{\bf The case $j > 2$}
\label{sec: jlarge}

To estimate of $C_{j,1}(s,w)$ and $C_{j,2}(s,w)$ for the case $j > 2$, we recast $C_{j,1}(s,w)$ as
\begin{align*}
%%\label{Cj1}
\begin{split}
		C_{j,1}(s,w)=\frac {1}{\#h_{(S_{K,j})}}\sum_{\chi \bmod {S_{K,j}}} \sum_{\substack{q, m \in \mathcal O_K \\ q\neq 0, \ m \odd
}}\frac{\chi(m)g_K(q, \chi_{j,m})}{N(q)^sN(m)^w}=\frac {1}{\#h_{(S_{K,j})}}\sum_{\chi \bmod {S_{K,j}}} \sum_{\substack{q \in \mathcal O_K \\ q\neq 0
}}\frac{h_j(q,w;\chi)}{N(q)^s},
\end{split}
\end{align*}	
 where $h_j(q,w;\chi)$ is defined as in \eqref{h}. \newline

As $\chi$ is a Hecke character modulo $S_{K,j}$ of trivial infinite type, we deduce from Lemma \ref{lem1} that $(w-1-1/j)C_{j,1}(s,w)$ is holomorphic in the region
\begin{align}
\label{Cj1region}
		\{(s,w):\ \Re(s)>1, \ \Re(s+ w/2)> 7/4, \ \Re(w)>1+ 1/j \}.
\end{align}
  Moreover, in the above region we have
\begin{align}
\label{Cj1bound}
		(w-1-1/j)C_{j,1}(s,w) \ll (1+|w|^2)^{ \max \{ (j-1)/2 \cdot (3/2-\Re(w)), 0\}+\varepsilon)}.
\end{align}

   Next, we note that
\begin{align*}
%%\label{C2exp}
  C_{j,2}(s,w)= \sum_{\substack{q \in \mathcal O_K \\ q\neq 0}}\frac{1}{N(q)^s} \sum_{\substack{ m \in \mathcal O_K \\ m \odd \\ m \equiv 1
\bmod {S_{K,j}}\\ m =  \text{a $j$-th power}}}\frac{g_K(q,\chi_{j,m})}{N(m)^w} =  \sum_{\substack{q \in \mathcal O_K \\ q\neq 0}}\frac{1}{N(q)^s} \sum_{\substack{ m \in \mathcal O_K \\ m \odd \\ m^j \equiv 1  \bmod
{S_{K,j}} }}\frac{g_K(q, \chi_{j,m^j})}{N(m)^{jw}}.
\end{align*}

  Applying the trivial bound that $g_K(q, \chi_{j,m^j})\leq N(m^j)$, we see that $C_{j,2}(s,w)$ is holomorphic in the region $\{ (s, w): \
\Re(s)>1,\ \Re(w)>1+1/j \}$. This together with \eqref{Cj12} and \eqref{Cj1region} implies that $(w-1-1/j)C_j(s,w)$ is defined in the region
given in \eqref{Cj1region}. Combining this with \eqref{S3} and \eqref{A1A2}, we infer that the function $(s-1)(w-1)(s+w-1-1/j)A_j(s,w)$ can be extended to
the region
\begin{align*}
%%\label{key}
		S_{4,j}=& \{(s,w): \ \Re(s+jw)>1, \ \Re(s+w)>1+1/j, \ \Re(w-s)> 3/2, \ \Re(s)<0 \}.
\end{align*}

  One checks directly that the convex hull of $S_2$ and $S_{4,j}$ is
\begin{equation*}
%%\label{key}
		S_{5,j}=\{(s,w):\ \Re(s+w)>1+ 1/j \}.
\end{equation*}
Now from Theorem \ref{Bochner}, $(s-1)(w-1)(s+w-1-1/j)A_j(s,w)$ continues holomorphically to the region $S_{5,j}$.

\subsubsection{\bf The case $j =2$}

Note again that $C_2(s,w)$ is initially convergent for $\Re(s)$, $\Re(w)$ large enough.  To extend this region, we note that we have
$G_K(k,\chi_{2, m}) =g_K(k,\chi_{2, m})$ if $(m, B_K)=1$ and $\chi_{2,m}(U_K) =1$. This allows us to recast $C_{2,1}(s,w)$ as
\begin{align}
\label{C1exp}
\begin{split}
  C_{2,1}(s,w)=& \sum_{\substack{q \in \mathcal O_K \\ q\neq 0}}\frac{1}{N(q)^s} \sum_{\substack{m \in \mathcal O_K \\ m \odd \\ (m, B_K)=1 \\ \chi_{2,m}(U_K) =1 }}\frac{g_K(q,\chi_{2,m})}{N(m)^w} = \sum_{\substack{q \in \mathcal O_K \\ q\neq 0}}\frac{1}{N(q)^s} \sum_{\substack{m \in \mathcal O_K \\ m \odd \\ (m, B_K)=1 \\ \chi_{2,m}(U_K) =1 }}\frac{G_K(q,\chi_{2,m})}{N(m)^w} \\
=& \frac {1}{2}\sum_{\chi \in C_K}\sum_{\substack{q \in \mathcal O_K \\ q\neq 0}}\frac{1}{N(q)^s} \sum_{\substack{m \in
\mathcal O_K \\ m \odd  }}\frac{\chi(m) G_K(q, \chi_{2,m})}{N(m)^w}.
\end{split}
\end{align}
  Similarly,
\begin{align}
\label{C21exp}
\begin{split}
  C_{2,2}(s,w)=& \frac {1}{2}\sum_{\chi \in C_K}\sum_{\substack{q \in \mathcal O_K \\ q\neq 0}}\frac{1}{N(q)^s} \sum_{\substack{m \in
\mathcal O_K \\ m \odd \\m = \square }}\frac{\chi(m) G_K(q, \chi_{2,m})}{N(m)^w},
\end{split}
\end{align}
where $m=\square$ means $m$ is a perfect square. \newline

Now using a technique of K. Soundararajan and M. P. Young in \cite[\S 3.3]{S&Y}, we write every $q \in \mathcal O_K, q \neq 0$ uniquely as $q=q_1q^2_2$ with $q_1$ square-free and $q_2$ primary.  So from \eqref{C1exp} and \eqref{C21exp},
\begin{equation} \label{Cidef}
		C_{2,i}(s,w)=\frac {1}{2}\sum_{\chi \in C_K} \sumstar_{q_1}\frac{ D_i(s, w; q_1, \chi)}{N(q_1)^s}, \quad i=1,2,
\end{equation}
where $\sum^*$ stands for the sum over square-free elements in $\mathcal O_K$ and
\begin{align}
\label{Didef}
%%\label{key}
		D_1(s, w; q_1, \chi)=\sum_{\substack{q_2, l \in \mathcal O_K \\ q_2, l \odd
}}\frac{\chi(l)G_K(q_1q^2_2,\chi_{2,l})}{N(q_2)^{2s}N(l)^w} \quad \mbox{and} \quad D_2(s, w; q_1, \chi)=\sum_{\substack{q_2, l \in \mathcal
O_K \\ q_2, l \odd }}\frac{\chi(l^2)G_K(q_1q^2_2,\chi_{2,l^2})}{N(q_2)^{2s}N(l)^{2w}}.
\end{align}

The following result gives the analytic properties of $D_j(s, w;q_1, \chi)$.
\begin{lemma}
\label{Estimate For D(w,t)}
 With the notation as above and assuming the truth of GRH, the functions $D_j(s, w; q_1, \chi)$, with $j=1$, $2$ have meromorphic
continuations to the region
\begin{align}
\label{Dregion}
		\{(s,w): \Re(s)>1, \ \Re(w)> \tfrac{3}{4} \}.
\end{align}
    Moreover, the only pole in this region is simple and occurs at $w = 3/2$ when $j=1$, $\chi \cdot \chi^{(\eta_K q_1)}_2=\psi_1$.  For $\Re(s) \geq
1+\varepsilon$, $\Re(w) \geq 3/4+\varepsilon$, away from the possible
poles, we have
\begin{align}
\label{Diest}
			|D_j(s, w; q_1, \chi)|\ll (N(q_1)(1+|w|^2))^{\max \{ (3/2-\Re(w))/2, 0 \}+\varepsilon}.
\end{align}		
\end{lemma}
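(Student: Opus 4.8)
The plan is to follow the method of Soundararajan and Young, treating $D_1$ and $D_2$ by first carrying out the inner sum over $l$ as an Euler product and only then summing over $q_2$. Fix a square-free $q_1$ and a primary $q_2$, write $k=q_1q_2^2$, and consider the inner sum $\sum_{l}\chi(l)G_K(k,\chi_{2,l})N(l)^{-w}$. Since $G_K(k,\chi_{2,l})$ is multiplicative in $l$ by Lemma \ref{GausssumMult}(i), this sum factors into local sums $\prod_\varpi$, which I would evaluate using the explicit formula of Lemma \ref{GausssumMult}(iii). For a generic prime $\varpi\nmid 2q_1q_2$ (so $\varpi\nmid k$, and the exponent $h$ is $0$) only the terms $l_\varpi=0,1$ survive, giving the local factor $1+\chi(\varpi)\leg{\eta_K k}{\varpi}_2N(\varpi)^{1/2-w}$; as $\leg{q_2^2}{\varpi}_2=1$ this equals $1+\chi(\varpi)\chi^{(\eta_K q_1)}_2(\varpi)N(\varpi)^{1/2-w}$, which is independent of $q_2$.

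The key algebraic step is the factorization $1+a_\varpi X=(1-a_\varpi^2X^2)(1-a_\varpi X)^{-1}$ applied with $a_\varpi=\chi(\varpi)\chi^{(\eta_K q_1)}_2(\varpi)$ (a quadratic quantity, so $a_\varpi^2\in\{0,1\}$) and $X=N(\varpi)^{1/2-w}$. Taking the product over generic primes identifies the main part of $D_1$ as $L(w-\thalf,\chi\cdot\chi^{(\eta_K q_1)}_2)/\zeta_K(2w-1)$, up to a finite Euler product of correction factors at the primes dividing $2q_1q_2$, which I would read off from Lemma \ref{GausssumMult}(iii) for the ramified local sums (for instance $\varpi\mid q_1,\ \varpi\nmid q_2$ gives the bounded factor $1-\chi(\varpi)^2N(\varpi)^{1-2w}$). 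These corrections are polynomials in $N(\varpi)^{-w}$, hence introduce no new singularities, and for $\varpi\mid q_2$ they grow at most like $N(\varpi)^{2v_\varpi(q_2)(1-\Re(w))+\varepsilon}$, so the remaining sum $\sum_{q_2}(\cdots)N(q_2)^{-2s}$ converges absolutely for $\Re(s)>1$ throughout $\Re(w)>3/4$. The numerator $L(w-\thalf,\chi\cdot\chi^{(\eta_K q_1)}_2)$ continues to an entire function except for a simple pole at $w=3/2$, which occurs precisely when $\chi\cdot\chi^{(\eta_K q_1)}_2=\psi_1$; this is the pole asserted in the lemma.

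The main obstacle is pushing the continuation past $\Re(w)=1$ down to $\Re(w)>3/4$, which is exactly where the factor $\zeta_K(2w-1)^{-1}$ must be controlled. Here GRH for $\zeta_K$ guarantees $\zeta_K(2w-1)\neq 0$ for $\Re(2w-1)>1/2$, so that $\zeta_K(2w-1)^{-1}$ is holomorphic and, via the bound \eqref{PgLest1} applied to $\zeta_K$, is $\ll|w|^\varepsilon$ in the range $\Re(w)\geq 3/4+\varepsilon$; without GRH, zeros of $\zeta_K$ in $1/2<\Re(2w-1)<1$ would produce spurious poles in $3/4<\Re(w)<1$, which is why GRH is indispensable for the claim that the only pole is at $w=3/2$. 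Combining this with the convexity bound \eqref{Lchidbound} for $L(w-\thalf,\chi\cdot\chi^{(\eta_K q_1)}_2)$, whose conductor is comparable to $N(q_1)$, yields exactly the stated estimate \eqref{Diest} with exponent $(3/2-\Re(w))/2$; the convergent $q_2$-sum together with the bad-prime factors contributes only a factor $(N(q_1)(1+|w|))^\varepsilon$.

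Finally, the analysis of $D_2$ is analogous but strictly simpler. Here the inner character is $\chi_{2,l^2}$, which is principal, and Lemma \ref{GausssumMult}(iii) forces $G_K(k,\chi_{2,\varpi^2})=0$ at every generic prime $\varpi\nmid k$ (since then $2=h+2$). Consequently the $l$-sum is supported on $l$ whose prime factors all divide $q_1q_2$, so $D_2$ carries no analogue of the $\zeta_K(2w-1)^{-1}$ factor and in particular has no pole in the region. The required holomorphy and the bound \eqref{Diest} then follow directly by bounding these restricted local sums and summing over $q_2$ for $\Re(s)>1$.
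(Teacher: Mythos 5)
Your proposal is correct and follows essentially the same Soundararajan--Young argument as the paper: both identify the generic local factors with those of $L(w-\thalf,\chi\cdot\chi^{(\eta_K q_1)}_2)/L(2w-1,(\chi\cdot\chi^{(\eta_K q_1)}_2)^2)$ via Lemma \ref{GausssumMult}, invoke GRH to control the inverse $L$-factor for $\Re(w)>3/4$, and use the convexity bound to get \eqref{Diest}. The only (cosmetic) difference is that the paper runs a single Euler product jointly in $l$ and the $\varpi$-exponent of $q_2$, absorbing the $k\geq 1$ terms as an error inside each local factor, whereas you perform the $l$-sum for fixed $q_2$ and then sum over $q_2$; the two organizations yield the same conclusion.
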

\begin{proof}
   We focus on $D_1(s, w; q_1, \chi)$ here since the proof for $D_2(s, w; q_1, \chi)$ is similar.  Lemma \ref{GausssumMult} gives that the summands  in the double sum in \eqref{Didef} defining $D_1(s,w; q_1, \chi)$ are jointly multiplicative functions of $l,q_2$.  We can write $D_1(s,w; q_1, \chi)$ as an Euler product
\begin{align}
\label{D1Eulerprod}
\begin{split}	
 &	D_1(s, w; q_1, \chi)= \prod_{\varpi} D_{1,\varpi}(s, w; q_1,  \chi),
\end{split}
\end{align}
  where
\begin{align} \label{Dexp}
D_{1,\varpi}(s, w; q_1,  \chi)= \displaystyle
\begin{cases}
\displaystyle \sum_{k=0}^\infty\frac{ 1}{N(\varpi)^{2ks}}, & \varpi |B_K, \\ \\
\displaystyle \sum_{l,k=0}^\infty\frac{ \chi(\varpi^{l})G_K\lz q_1\varpi^{2k}, \chi_{2, \varpi^l}\pz  }{N(\varpi)^{lw+2ks}}, & (\varpi, B_K)=1.
\end{cases}
\end{align}

Further decomposing the above double sum, we get
\begin{align} \label{Dgenest}	
\begin{split}
  &	\sum_{l,k=0}^\infty\frac{ \chi(\varpi^{l})G_K\lz q_1\varpi^{2k}, \chi_{2, \varpi^l}\pz  }{N(\varpi)^{lw+2ks}}  = \sum_{l=0}^\infty \frac{
\chi(\varpi^{l}) G_K\lz q_1, \chi_{2, \varpi^l}\pz  }{N(\varpi)^{lw}}  + \sum_{l \geq 0, k \geq 1}\frac{ \chi(\varpi^{l})G_K\lz
q_1\varpi^{2k}, \chi_{2, \varpi^l}\pz  }{N(\varpi)^{lw+2ks}}.
\end{split}
\end{align}

As $q_1$ is square-free, we deduce from Lemma \ref{GausssumMult} that
\[	|G_K\lz q_1\varpi^{2k}, \chi_{2, \varpi^l} \pz| \ll N(\varpi)^l, \]
and
\[ G_K\lz q_1\varpi^{2k}, \chi_{2, \varpi^l}\pz=0, \; \mbox{if} \; l \geq 2k+3. \]

  The above estimations allow us to see that when $\Re(s)>1$, $\Re(w)>3/4$,
\begin{align}
\label{Dk1est}	
\begin{split}
 \sum_{l \geq 0, k \geq 1} & \frac{\chi(\varpi^{l}) G_K\lz q_1\varpi^{2k}, \chi_{2, \varpi^l}\pz  }{N(\varpi)^{lw+2ks}}
\ll   \sum^{\infty}_{k=1}\sum_{0 \leq l \leq 2k+2}\Bigg| \frac{1}{N(\varpi)^{l(w-1)+2ks}}\Bigg|  \\
\ll & \sum^{\infty}_{k=1}\frac{2k+3}{N(\varpi)^{2k\Re(s)}}\Big (1+ \frac 1{N(\varpi)^{(2k+2)(\Re(w)-1)}}\Big ) \ll N(\varpi)^{-2\Re(s)}+N(\varpi)^{-2\Re(s)-4\Re(w)+4} .
\end{split}
\end{align}
Further applying Lemma \ref{GausssumMult} yields that for $\varpi \nmid B_Kq_1$, $\Re(s)>1$, $\Re(w)>3/4$,
\begin{align}
\label{Dgenl0gen}	
\begin{split}
  \sum_{l=0}^\infty & \frac{ \chi(\varpi^{l}) G_K\lz q_1, \chi_{2, \varpi^l}\pz  }{N(\varpi)^{lw}}  = 1+\frac{ \chi(\varpi) \cdot \chi^{(
\eta_K q_1)}_2(\varpi)}{N(\varpi)^{w-1/2}} \\
& = L_{\varpi} \lz w-\tfrac{1}{2}, \chi \cdot \chi^{(\eta_K q_1)}_2 \pz  \lz 1-\frac {(\chi \cdot \chi^{(\eta_K q_1)}_2)^2(\varpi)}{N(\varpi)^{2w-1}} \pz =  \frac {L_{\varpi}\lz w-\tfrac{1}{2}, \chi \cdot \chi^{(\eta_K q_1)}_2\pz}{L_{\varpi}(2w-1, (\chi \cdot \chi^{(\eta_K q_1)}_2)^2)}.
\end{split}
\end{align}
Now from \eqref{Dexp}--\eqref{Dgenl0gen}, for $\varpi \nmid B_Kq_1$, $\Re(s)>1$, $\Re(w)>3/4$,
\begin{align}
\label{Dgenexp}	
\begin{split}
   & D_{1,\varpi}(s, w; q_1,  \chi) =  \frac {L_{\varpi}\lz w-\tfrac{1}{2}, \chi \cdot \chi^{(\eta_K q_1)}_2\pz}{L_{\varpi}(2w-1, (\chi \cdot \chi^{(\eta_K q_1)}_2)^2)}\lz
1+O \Big ( N(\varpi)^{-2\Re(s)}+N(\varpi)^{-2\Re(s)-4\Re(w)+4} \Big ) \pz .
\end{split}
\end{align}
  We deduce the first assertion of the lemma readily from \eqref{D1Eulerprod}, \eqref{Dexp} and the above.  Moreover, the only pole in region \eqref{Dregion} is simple and located at $w = 3/2$.  This occurs if $\chi \cdot \chi^{(\eta_K q_1)}_2$ is a principal character. \newline

Furthermore, Lemma \ref{GausssumMult} implies that if $\varpi \nmid B_K$ and $\varpi | q_1$,
\begin{align} \label{Dgenl0}	
\begin{split}
  & \sum_{l=0}^\infty \frac{ \chi(\varpi^{l}) G_K\lz q_1, \chi_{2,\varpi^l} \pz  }{N(\varpi)^{lw}} = 1-\frac{
\chi(\varpi^{2})}{N(\varpi)^{2w-1}} =  1+O(N(\varpi)^{-2\Re(w)+1}).
\end{split}
\end{align}

  It follows from \eqref{Dexp}, \eqref{Dk1est} and \eqref{Dgenl0} that for $\varpi \nmid B_K, \varpi | q_1$, $\Re(s)>1, \Re(w)>3/4$,
\begin{align}
\label{Dgenexp1}	
\begin{split}
   & D_{1,\varpi}(s, w;q_1,  \chi)
=  1+O \Big (N(\varpi)^{-2\Re(w)+1}+N(\varpi)^{-2\Re(s)}+N(\varpi)^{-2\Re(s)-4\Re(w)+4}\Big )  .
\end{split}
\end{align}

  We conclude from \eqref{D1Eulerprod}, \eqref{Dexp}, \eqref{Dgenexp} and \eqref{Dgenexp1} that for $\Re(s)\geq 1+\varepsilon$ and $\Re(w)
\geq 3/4+\varepsilon$,
\begin{align*}
\begin{split}
%%\label{Product over primes dividing q estimation}
	D_{1}(s, w; q_1, \chi) \ll	& |N(q_1)|^{\varepsilon} \left|\frac {L\lz w-\tfrac{1}{2}, \chi \cdot \chi^{(\eta_K q_1)}_2\pz}{L(2w-1,
(\chi \cdot \chi^{(\eta_K q_1)}_2)^2)} \right|.
\end{split}
\end{align*}

Now under GRH, \eqref{Lchidgeneralbound} and \eqref{PgLest2} yield
\begin{align*}
\begin{split}
%%\label{Product over primes dividing q estimation}
	D_{1}(s, w; q_1, \chi) \ll	 (N(q_1)(1+|w|^2))^{\max \{  (3/2-\Re(w))/2, 0 \}+\varepsilon}.
\end{split}
\end{align*}
 This leads to the estimate in \eqref{Diest} and completes the proof of the lemma.
\end{proof}

Applying Lemma \ref{Estimate For D(w,t)} with \eqref{Cj12} and \eqref{Cidef} gives that $(w-3/2)C_2(s,w)$ is defined in the region
\begin{equation*}
%%\label{key}
		\{(s,w):\ \Re(s)>1, \ \Re(w)>3/4, \ \Re(s+w/2)>7/4 \}.
\end{equation*}
Together with the above, using \eqref{S3} with $j=2$ and \eqref{A1A2} now implies that $(s-1)(w-1)(s+w-3/2)A_2(s,w)$ can be extended to the region
\begin{align*}
%%\label{key}
		S_{4,2}=& \{(s,w): \ \Re(s+2w)>1, \ \Re(s+w)>3/4,\ \Re(w-s)>3/2,  \ \Re(s)<0\}.
\end{align*}

The convex hull of $S_2$ and $S_{4,2}$ is
\begin{equation*}
%%\label{key}
		S_{5,2}=\{(s,w):\ \Re(s+w)>3/4 \}.
\end{equation*}
Now Theorem \ref{Bochner} implies that $(s-1)(w-1)(s+w-3/2)A_2(s,w)$ can be continued holomorphically to $S_{5,2}$.

\subsection{Residue of $A_2(s,w)$ at $s=3/2-w$}
\label{sec:resAw}

Using the notation from the proof of Lemma \ref{Estimate For D(w,t)} and mindful of the definition of $C_K$ in \eqref{BKCKdef}, we see that $\chi \cdot \chi^{(\eta_K q_1)}_2=\psi_1$ for any $\chi \in C_K$ implies that $q_1 \in U_K$ so that $N(q_1)=1$. We then deduce from \eqref{Cj12},  \eqref{Cidef} and Lemma \ref{Estimate For D(w,t)} that $C_2(s, w)$ has a pole at $w=3/2$ and
\begin{align}
\label{Cres}
\begin{split}
\res_{w=3/2}C_2(s,w)=& \frac {1}{2}\sum_{\chi \in C_K}\sum_{\substack{ q_1 \in U_K \\ \chi \cdot \chi^{(\eta_K q_1)}_2=\psi_1}}\res_{w=3/2}D_1(s,w;q_1, \chi).
\end{split}
\end{align}

  We now fix a character $\chi \in C_K$ and apply Lemma \ref{GausssumMult} to obtain that for any prime $\varpi \nmid B_K$, 	
\begin{align}
\label{Dk1estpsi1}	
\begin{split}
 \sum_{l \geq 0, k \geq 1}\frac{ \chi(\varpi^{l})G_K\lz q_1\varpi^{2k}, \chi_{2, \varpi^l}\pz  }{N(\varpi)^{3l/2+2ks}} =& \sum_{k \geq
1}\frac {1}{N(\varpi)^{2ks}}\Big (1+\sum^k_{l=1}\frac{ \chi(\varpi^{2k}) \varphi_K(\varpi^{2l}) }{N(\varpi)^{3l}}+\frac {\chi(\varpi^{2k+1})
\leg {\eta_Kq_1}{\varpi} N(\varpi)^{2k+1/2}}{N(\varpi)^{3(2k+1)/2}} \Big ).
\end{split}
\end{align}

  Using $\chi(\varpi)^2=1$ and $\chi \cdot \chi^{(\eta_Kq_1)}_2=\psi_1$, we evaluate the last expression above to see that
\begin{align*}
%%\label{Dk1estpsi2}	
\begin{split}
 \sum_{l \geq 0, k \geq 1}\frac{ \chi(\varpi^{l})G_K\lz \eta_Kq_1\varpi^{2k}, \chi_{2, \varpi^l}\pz  }{N(\varpi)^{3l/2+2ks}}
= & \Big(1+\frac 1{N(\varpi)} \Big)N(\varpi)^{-2s}(1-N(\varpi)^{-2s})^{-1}.
\end{split}
\end{align*}

  Further using $\chi \cdot \chi^{(\eta_Kq_1)}_2=\psi_1$, we deduce from \eqref{Dgenest}, \eqref{Dgenl0gen} and \eqref{Dk1estpsi1} that for any prime $\varpi \nmid B_K$,
\begin{align}
\label{D1pexp}	
\begin{split}
   D_{1,\varpi}(s, w; q_1,  \chi)=  \zeta_{K,\varpi}(w-1/2)Q_{\varpi}(s, w),
\end{split}
\end{align}
  where
\begin{align}
\label{Qpexp}	
\begin{split}
   Q_{\varpi}(s, w)\Big |_{w=3/2} =& \zeta_{K, \varpi}(2s)\Big(1-\frac {1}{N(\varpi)^{2}} \Big).
\end{split}
\end{align}
	
 We conclude from \eqref{D1Eulerprod}, \eqref{Dexp}, \eqref{D1pexp} and \eqref{Qpexp} that we have
\begin{align}
\label{D1exp}	
\begin{split}
   D_{1}(s, w; q_1,  \chi) =\zeta_K(w-1/2)Q(s,w),
\end{split}
\end{align}
	where
\begin{align}
\label{Qexp}	
\begin{split}
   Q(s,w)\Big |_{w=3/2}=\frac {\zeta_K(2s)}{\zeta_K(2)}\prod_{\varpi | B_K}\Big(1+\frac 1{N(\varpi)}\Big )^{-1}.
\end{split}
\end{align}

   Note that the expression of $Q(s, w)$ is independent of $q_1$ and $\chi$.  Additionally, as $\chi$ varies over the characters in $C_K$, there are exactly $|U_K|$ possible choices of $q_1$ such that $\chi \cdot \chi^{(\eta_Kq_1)}_2=\psi_1$. It follows from this, \eqref{Cres}, \eqref{D1exp} and \eqref{Qexp} that
\begin{align}
\label{Cresexplicit}
		\res_{w=3/2}C_2(s,w)=& \frac {|U_K|r_K}{2}\frac {\zeta_K(2s)}{\zeta_K(2)}\prod_{\varpi | B_K}\Big(1+\frac 1{N(\varpi)}\Big )^{-1},
\end{align}
  where we recall that $r_K$ is denoted for the residue of $\zeta_K(s)$ at $s=1$. \newline
	
 We obtain from \eqref{A1A2}, the functional equation \eqref{Functional equation in s} and \eqref{Cresexplicit} that
\begin{align*}
%%\label{Aress}
  \res_{s=3/2-w}A_2(s,w) =& \res_{s=3/2-w}A_{2,2}(s,w) =\frac {|U_K|r_K}{2} \Big(\frac {2\pi}{\sqrt{|D_K|}}\Big )^{2-2w}\frac {\Gamma(w-1/2)}{\Gamma(3/2-w)} \frac
{\zeta_K(2w-1)}{\zeta_K(2)}\prod_{\varpi | B_K}\Big(1+\frac 1{N(\varpi)}\Big )^{-1} .
\end{align*}

Setting $w=1/2+\alpha$ above gives that
\begin{align}
\label{Aress1alpha}
\begin{split}
			&\res_{s=1-\alpha}A_2(s, \tfrac{1}{2}+\alpha) =\frac {|U_K|r_K}{2} \Big(\frac {2\pi}{\sqrt{|D_K|}}\Big )^{1-2\alpha}\frac
{\Gamma(\alpha)}{\Gamma(1-\alpha)} \frac {\zeta_K(2\alpha)}{\zeta_K(2)}\prod_{\varpi | B_K}\Big(1+\frac 1{N(\varpi)}\Big )^{-1} .
\end{split}
\end{align}
	
 Note that, upon taking $\chi$ to be the principal character modulo $q$ in the functional equation \eqref{fneqnL}, we have
\begin{align*}
%%\label{zetafcneqn}
  \zeta_K(2\alpha)=|D_K|^{1/2-2\alpha}(2\pi)^{4\alpha-1}\frac {\Gamma(1-2\alpha)}{\Gamma (2\alpha)}\zeta_K(1-2\alpha).
\end{align*}

   This enables us to rewrite \eqref{Aress1alpha} as
\begin{align}
\label{Aress1}
\begin{split}
		&\res_{s=1-\alpha}A_2(s,\tfrac{1}{2}+\alpha) =\frac {|U_K|r_K}{2} \Big(\frac {2\pi}{\sqrt{|D_K|}}\Big )^{2\alpha} \frac{\Gamma
(1-2\alpha)\Gamma ( \alpha)}{\Gamma(1-\alpha)\Gamma (2\alpha)}\cdot\frac{\zeta_K(1-2\alpha)}{\zeta_K(2)}\prod_{\varpi | B_K}\Big(1+\frac
1{N(\varpi)}\Big )^{-1}.
\end{split}
\end{align}

\subsection{Bounding $A_j(s,w)$ in vertical strips}
\label{Section bound in vertical strips}
	
  In this section, we estimate $|A_j(s,w)|$ in vertical strips, which will be needed in our proofs of Theorems \ref{Thmfirstmoment} and \ref{Thmfirstmomentjlarge}. \newline
	
  For previously defined regions $S_{i,j}$, we set
\begin{equation*}
%%\label{Definition of S tilde}
		\widetilde S_{i,j}=S_{i, j,\delta}\cap\{(s,w):\Re(s) \geq -5/2,\ \Re(w) \geq 1/2-\delta\},
\end{equation*}
	where $\delta$ is a fixed number with $0<\delta <1/1000$ and $S_{i, j,\delta}= \{ (s,w)+\delta (1,1) : (s,w) \in S_{i,j} \} $. We further
Set
\begin{equation*}
%%\label{key}
		p_j(s,w)=(s-1)(w-1)(s+w-1-1/j), \quad \widetilde p_j(s,w)=1+|p_j(s,w)|.
\end{equation*}
Observe that $p_j(s,w)A_j(s,w)$ is analytic in the regions under our consideration. \newline

 As described in Section \ref{Sec: first region}, \eqref{L1estimation} and partial summation lead to the bound $|p_j(s,w)A_j(s,w)| \ll \widetilde p_j(s,w)|w|^{1/2+\varepsilon}$ for $\Re(s)>1$, $\Re(w) \geq 1/2$.  We further apply \eqref{Abound} and \eqref{L1wbound} to convert the case $\Re(w) <1/2$ back to the case $\Re(w) >1/2$. Upon applying \eqref{L1estimation} and partial summation again and making use of \eqref{Stirlingratio}, we see that in the region
$\widetilde S_{0,j}$,
\begin{align*}
%%\label{AboundS0}
\begin{split}
      |p_j(s,w)A_j(s,w)| \ll \widetilde p_j(s,w)(1+|w|^2)^{\max \{1/2-\Re(w), 1/4 \}+\varepsilon}.
\end{split}
\end{align*}

   Analogously, we bound the expression for $A_j(s,w)$ given in \eqref{Sum A(s,w,z) over n} for $j>2$ and a similar expression for $A_2(s,w)$.  This yields in the region $\widetilde S_{1,j}$,
\begin{align*}
%%\label{key}		
	|p_j(s,w)A_j(s,w)|\ll \widetilde p_j(s,w)(1+|s|^2)^{\max \{1/2-\Re(s), 1/4\}+\varepsilon}.
\end{align*}
Utlizing the above bounds, Proposition \ref{Extending inequalities} now gives that in the convex hull $\widetilde S_{2,j}$ of
$\widetilde S_{0,j}$ and $\widetilde S_{1,j}$,
\begin{equation} \label{AboundS2}
		|p_j(s,w)A_j(s,w)|\ll \widetilde p_j(s,w) (1+|w|^2)^{\max \{1/2-\Re(w),1/2 \}+\varepsilon}(1+|s|^2)^{3+\varepsilon}.
\end{equation}

   Moreover, we use the estimates in \eqref{Lchidgeneralbound} for $\zeta_K(s)$ (corresponding taking $\chi$ there to be the principal character modulo $1$) and $L(jw, \chi^j)$ to bound $A_{j,1}(s,w)$ in \eqref{residuesgen}.  In the region $\widetilde S_{3,j}$,
\begin{align} \label{A1bound}
		|A_{j,1}(s,w)| \ll (1+|jw|^2)^{\max \{1/2-\Re(jw), (1-\Re(jw))/2, 0\}+\varepsilon}(1+|s|^2)^{\max \{1/2-\Re(s), (1-\Re(s))/2, 0\}+\varepsilon}.
\end{align}

Also, we deduce from \eqref{Cj12}, \eqref{C1exp}--\eqref{Cidef} and Lemma \ref{Estimate For D(w,t)} that, under GRH,
\begin{equation}
\label{Csbound}
		|C_2(s,w)|\ll (1+|w|^2)^{\max \{ (3/2-\Re(w))/2, 0 \}+\varepsilon}
\end{equation}
   in the region
\begin{equation*}
%%\label{key}
		\{(s,w):\Re(s) \geq 1+\varepsilon, \ \Re(w) \geq 3/4+\varepsilon\}.
\end{equation*}
With \eqref{A1bound} and \eqref{Csbound}, we now apply \eqref{A1A2}, the functional equation \eqref{Functional equation in s} together with \eqref{Stirlingratio} to bound the ratio of the gamma functions.  Consequently, under GRH, in the region
$\widetilde S_{4,2}$ (keeping in mind that $\Re(s+w)>3/2$ in this case),
\begin{align}
\label{AboundS3}
\begin{split}
|p_2(s,w)A_2(s,w)|\ll  \widetilde p_2(s,w)(1+|s+w|^2)^{\max \{(3/2-\Re(s+w))/2, 0\} +\varepsilon}(1+|s|^2)^{3+\varepsilon} \ll  \widetilde p_2(s,w) (1+|w|^2)^{\varepsilon}(1+|s|^2)^{3+\varepsilon}.
\end{split}
\end{align}

  For $j>2$, we note the estimation \eqref{Cj1bound}  for $(w-1-1/j)C_{j,1}(s,w)$ in the region defined by \eqref{Cj1region} and recall from Section \ref{sec: jlarge} that $C_{j,2}(s,w) \ll 1$ in the same region. Again by \eqref{A1A2}, the functional equation \eqref{Functional equation in s} together with \eqref{Stirlingratio} and \eqref{A1bound}, we see that in the region
$\widetilde S_{4,j}$ ($\Re(s+w)>1+1/j$ in this case and $j \leq 6$),
\begin{align} \label{AboundS3j}
\begin{split}
		|p_j(s,w)A_j(s,w)|\ll& \widetilde p_j(s,w)(1+|s+w|^2)^{\max \{(j-1)/2 \cdot (3/2-\Re(s+w))/2, 0\} +\varepsilon}(1+|s|^2)^{3+\varepsilon} \\
\ll & \widetilde p_j(s,w) (1+|w|^2)^{(j-1)(j-2)/(8j)+\varepsilon}(1+|s|^2)^{5+\varepsilon}.
\end{split}
\end{align}

 We now conclude from \eqref{AboundS2},  \eqref{AboundS3}, \eqref{AboundS3j} and Proposition \ref{Extending inequalities} that in the convex hull $\widetilde S_{5,j}$ of $\widetilde S_{2,j}$ and $\widetilde S_{4,j}$, for all $j$'s under our consideration,
\begin{equation}
\label{AboundS4}
		|p_j(s,w)A_j(s,w)|\ll \widetilde p_j(s,w)(1+|w|^2)^{1/2+\varepsilon}(1+|s|^2)^{5+\varepsilon}.
\end{equation}
 The above estimation is valid under GRH for $j=2$ and unconditionally for $j>2$.

%%----------------------------------------------------------------------------
\subsection{Completion of the Proofs}
%%\label{sec Poisson}
%%----------------------------------------------------------------------------

We now evaluate the integrals in \eqref{MellinInversionj} by shifting the line of integration there to $\Re(s)=1/2+1/j-\Re(\alpha)+\varepsilon$ for all $j$'s of our interest.  Note that integration by parts implies that for any rational integer $E \geq 0$,
\begin{align} \label{whatbound}
 \widehat \Phi(s)  \ll  (1+|s|)^{-E}.
\end{align}

The integrals on the new line can be absorbed into the $O$-terms in \eqref{FirstmomentSmoothed} for $j=2$ and \eqref{FirstmomentSmoothedjlarge} for other $j$ upon using \eqref{AboundS4} and \eqref{whatbound}.  We also encounter two simple poles at $s=1$ for all $j$ and $s=1-\alpha$ for $j=2$ in the process. The the corresponding residue at $s=1$ is given in \eqref{Residue at s=1} for $j>2$ and \eqref{Residue at s=1j=2} for $j=2$ while the corresponding residue at $s=1-\alpha$ is given in \eqref{Aress1}. Direct computations now lead to the main terms given in \eqref{FirstmomentSmoothed} and \eqref{FirstmomentSmoothedjlarge}. This completes the proofs of Theorems \ref{Thmfirstmomentatcentral} and \ref{Thmfirstmomentjlarge}.

\vspace*{.5cm}

\noindent{\bf Acknowledgments.}   P. G. is supported in part by NSFC Grant 12471003 and L. Z. by the Faculty Silverstar Grant PS65447 at the
University of New South Wales (UNSW).

\bibliography{biblio}
\bibliographystyle{amsxport}

\end{document}